\newtheorem{theorem}{Theorem}
\newtheorem{proposition}[theorem]{Proposition}
\newtheorem{lemma}[theorem]{Lemma}
\newcommand{\FF}{{\cal F}}
\newcommand{\EE}{{\mathbb E}}
\newcommand{\NN}{{\mathbb N}}
\newcommand{\ZZ}{{\mathbb Z}}
\newcommand{\RR}{{\mathbb R}}
\newcommand{\unlabel}[2]{\left\llbracket #1\right\rrbracket_{#2}}
\newcommand{\flageroot}{\,{\mathchoice
    {\epsfysize 1.92ex \raisebox{-.3ex}{\epsfbox{profile3.101}}}
    {\epsfysize 1.92ex \raisebox{-.3ex}{\epsfbox{profile3.101}}}
    {\epsfysize 1.44ex \epsfbox{profile3.101}}
    {\epsfysize 1.08ex \epsfbox{profile3.101}}
  }\,}
\newcommand{\flagfroot}{\,{\mathchoice
    {\epsfysize 1.92ex \raisebox{-.3ex}{\epsfbox{profile3.102}}}
    {\epsfysize 1.92ex \raisebox{-.3ex}{\epsfbox{profile3.102}}}
    {\epsfysize 1.44ex \epsfbox{profile3.102}}
    {\epsfysize 1.08ex \epsfbox{profile3.102}}
  }\,}
\newcommand{\flagHzero}{\,{\mathchoice
    {\epsfysize 1.92ex \raisebox{-.3ex}{\epsfbox{profile3.103}}}
    {\epsfysize 1.92ex \raisebox{-.3ex}{\epsfbox{profile3.103}}}
    {\epsfysize 1.44ex \epsfbox{profile3.103}}
    {\epsfysize 1.08ex \epsfbox{profile3.103}}
  }\,}
\newcommand{\flagHone}{\,{\mathchoice
    {\epsfysize 1.92ex \raisebox{-.3ex}{\epsfbox{profile3.104}}}
    {\epsfysize 1.92ex \raisebox{-.3ex}{\epsfbox{profile3.104}}}
    {\epsfysize 1.44ex \epsfbox{profile3.104}}
    {\epsfysize 1.08ex \epsfbox{profile3.104}}
  }\,}
\newcommand{\flagHtwo}{\,{\mathchoice
    {\epsfysize 1.92ex \raisebox{-.3ex}{\epsfbox{profile3.105}}}
    {\epsfysize 1.92ex \raisebox{-.3ex}{\epsfbox{profile3.105}}}
    {\epsfysize 1.44ex \epsfbox{profile3.105}}
    {\epsfysize 1.08ex \epsfbox{profile3.105}}
  }\,}
\newcommand{\flagHthree}{\,{\mathchoice
    {\epsfysize 1.92ex \raisebox{-.3ex}{\epsfbox{profile3.106}}}
    {\epsfysize 1.92ex \raisebox{-.3ex}{\epsfbox{profile3.106}}}
    {\epsfysize 1.44ex \epsfbox{profile3.106}}
    {\epsfysize 1.08ex \epsfbox{profile3.106}}
  }\,}
\newcommand{\flagErootzero}{\,{\mathchoice
    {\epsfysize 1.92ex \raisebox{-.3ex}{\epsfbox{profile3.107}}}
    {\epsfysize 1.92ex \raisebox{-.3ex}{\epsfbox{profile3.107}}}
    {\epsfysize 1.44ex \epsfbox{profile3.107}}
    {\epsfysize 1.08ex \epsfbox{profile3.107}}
  }\,}
\newcommand{\flagErootone}{\,{\mathchoice
    {\epsfysize 1.92ex \raisebox{-.3ex}{\epsfbox{profile3.108}}}
    {\epsfysize 1.92ex \raisebox{-.3ex}{\epsfbox{profile3.108}}}
    {\epsfysize 1.44ex \epsfbox{profile3.108}}
    {\epsfysize 1.08ex \epsfbox{profile3.108}}
  }\,}
\newcommand{\flagEroottwo}{\,{\mathchoice
    {\epsfysize 1.92ex \raisebox{-.3ex}{\epsfbox{profile3.109}}}
    {\epsfysize 1.92ex \raisebox{-.3ex}{\epsfbox{profile3.109}}}
    {\epsfysize 1.44ex \epsfbox{profile3.109}}
    {\epsfysize 1.08ex \epsfbox{profile3.109}}
  }\,}
\newcommand{\flagErootboth}{\,{\mathchoice
    {\epsfysize 1.92ex \raisebox{-.3ex}{\epsfbox{profile3.110}}}
    {\epsfysize 1.92ex \raisebox{-.3ex}{\epsfbox{profile3.110}}}
    {\epsfysize 1.44ex \epsfbox{profile3.110}}
    {\epsfysize 1.08ex \epsfbox{profile3.110}}
  }\,}
\newcommand{\flagEroot}{\,{\mathchoice
    {\epsfysize 1.92ex \raisebox{-.3ex}{\epsfbox{profile3.111}}}
    {\epsfysize 1.92ex \raisebox{-.3ex}{\epsfbox{profile3.111}}}
    {\epsfysize 1.44ex \epsfbox{profile3.111}}
    {\epsfysize 1.08ex \epsfbox{profile3.111}}
  }\,}
\newcommand{\flagFrootzero}{\,{\mathchoice
    {\epsfysize 1.92ex \raisebox{-.3ex}{\epsfbox{profile3.112}}}
    {\epsfysize 1.92ex \raisebox{-.3ex}{\epsfbox{profile3.112}}}
    {\epsfysize 1.44ex \epsfbox{profile3.112}}
    {\epsfysize 1.08ex \epsfbox{profile3.112}}
  }\,}
\newcommand{\flagFrootone}{\,{\mathchoice
    {\epsfysize 1.92ex \raisebox{-.3ex}{\epsfbox{profile3.113}}}
    {\epsfysize 1.92ex \raisebox{-.3ex}{\epsfbox{profile3.113}}}
    {\epsfysize 1.44ex \epsfbox{profile3.113}}
    {\epsfysize 1.08ex \epsfbox{profile3.113}}
  }\,}
\newcommand{\flagFroottwo}{\,{\mathchoice
    {\epsfysize 1.92ex \raisebox{-.3ex}{\epsfbox{profile3.114}}}
    {\epsfysize 1.92ex \raisebox{-.3ex}{\epsfbox{profile3.114}}}
    {\epsfysize 1.44ex \epsfbox{profile3.114}}
    {\epsfysize 1.08ex \epsfbox{profile3.114}}
  }\,}
\newcommand{\flagFrootboth}{\,{\mathchoice
    {\epsfysize 1.92ex \raisebox{-.3ex}{\epsfbox{profile3.115}}}
    {\epsfysize 1.92ex \raisebox{-.3ex}{\epsfbox{profile3.115}}}
    {\epsfysize 1.44ex \epsfbox{profile3.115}}
    {\epsfysize 1.08ex \epsfbox{profile3.115}}
  }\,}
\newcommand{\flagFroot}{\,{\mathchoice
    {\epsfysize 1.92ex \raisebox{-.3ex}{\epsfbox{profile3.116}}}
    {\epsfysize 1.92ex \raisebox{-.3ex}{\epsfbox{profile3.116}}}
    {\epsfysize 1.44ex \epsfbox{profile3.116}}
    {\epsfysize 1.08ex \epsfbox{profile3.116}}
  }\,}
\newcommand{\flagfourtwo}{\,{\mathchoice
    {\epsfysize 1.92ex \raisebox{-.3ex}{\epsfbox{profile3.117}}}
    {\epsfysize 1.92ex \raisebox{-.3ex}{\epsfbox{profile3.117}}}
    {\epsfysize 1.44ex \epsfbox{profile3.117}}
    {\epsfysize 1.08ex \epsfbox{profile3.117}}
  }\,}
\newcommand{\flagfourthree}{\,{\mathchoice
    {\epsfysize 1.92ex \raisebox{-.3ex}{\epsfbox{profile3.118}}}
    {\epsfysize 1.92ex \raisebox{-.3ex}{\epsfbox{profile3.118}}}
    {\epsfysize 1.44ex \epsfbox{profile3.118}}
    {\epsfysize 1.08ex \epsfbox{profile3.118}}
  }\,}
\newcommand{\flagfoursix}{\,{\mathchoice
    {\epsfysize 1.92ex \raisebox{-.3ex}{\epsfbox{profile3.119}}}
    {\epsfysize 1.92ex \raisebox{-.3ex}{\epsfbox{profile3.119}}}
    {\epsfysize 1.44ex \epsfbox{profile3.119}}
    {\epsfysize 1.08ex \epsfbox{profile3.119}}
  }\,}
\newcommand{\flagfourseven}{\,{\mathchoice
    {\epsfysize 1.92ex \raisebox{-.3ex}{\epsfbox{profile3.120}}}
    {\epsfysize 1.92ex \raisebox{-.3ex}{\epsfbox{profile3.120}}}
    {\epsfysize 1.44ex \epsfbox{profile3.120}}
    {\epsfysize 1.08ex \epsfbox{profile3.120}}
  }\,}
\newcommand{\flagfoureight}{\,{\mathchoice
    {\epsfysize 1.92ex \raisebox{-.3ex}{\epsfbox{profile3.121}}}
    {\epsfysize 1.92ex \raisebox{-.3ex}{\epsfbox{profile3.121}}}
    {\epsfysize 1.44ex \epsfbox{profile3.121}}
    {\epsfysize 1.08ex \epsfbox{profile3.121}}
  }\,}
\newcommand{\flagfournine}{\,{\mathchoice
    {\epsfysize 1.92ex \raisebox{-.3ex}{\epsfbox{profile3.122}}}
    {\epsfysize 1.92ex \raisebox{-.3ex}{\epsfbox{profile3.122}}}
    {\epsfysize 1.44ex \epsfbox{profile3.122}}
    {\epsfysize 1.08ex \epsfbox{profile3.122}}
  }\,}
\newcommand{\flagcherryonethree}{\,{\mathchoice
    {\epsfysize 1.92ex \raisebox{-.3ex}{\epsfbox{profile3.123}}}
    {\epsfysize 1.92ex \raisebox{-.3ex}{\epsfbox{profile3.123}}}
    {\epsfysize 1.44ex \epsfbox{profile3.123}}
    {\epsfysize 1.08ex \epsfbox{profile3.123}}
  }\,}
\newcommand{\flagcherrytwothree}{\,{\mathchoice
    {\epsfysize 1.92ex \raisebox{-.3ex}{\epsfbox{profile3.124}}}
    {\epsfysize 1.92ex \raisebox{-.3ex}{\epsfbox{profile3.124}}}
    {\epsfysize 1.44ex \epsfbox{profile3.124}}
    {\epsfysize 1.08ex \epsfbox{profile3.124}}
  }\,}
\newcommand{\flagcherrytwo}{\,{\mathchoice
    {\epsfysize 1.92ex \raisebox{-.3ex}{\epsfbox{profile3.125}}}
    {\epsfysize 1.92ex \raisebox{-.3ex}{\epsfbox{profile3.125}}}
    {\epsfysize 1.44ex \epsfbox{profile3.125}}
    {\epsfysize 1.08ex \epsfbox{profile3.125}}
  }\,}
\newcommand{\flagcherryone}{\,{\mathchoice
    {\epsfysize 1.92ex \raisebox{-.3ex}{\epsfbox{profile3.126}}}
    {\epsfysize 1.92ex \raisebox{-.3ex}{\epsfbox{profile3.126}}}
    {\epsfysize 1.44ex \epsfbox{profile3.126}}
    {\epsfysize 1.08ex \epsfbox{profile3.126}}
  }\,}
\newcommand{\flage}{\,{\mathchoice
    {\epsfysize 1.92ex \raisebox{-.3ex}{\epsfbox{profile3.127}}}
    {\epsfysize 1.92ex \raisebox{-.3ex}{\epsfbox{profile3.127}}}
    {\epsfysize 1.44ex \epsfbox{profile3.127}}
    {\epsfysize 1.08ex \epsfbox{profile3.127}}
  }\,}
\begin{document}
\title{Densities of 3-vertex graphs\thanks{The work of the first author was supported by the ERC grant ``High-dimensional combinatorics'' at the Hebrew University. The work of the third and fifth authors was supported by the European Research Council (ERC) under the European Union's Horizon 2020 research and innovation programme (grant agreement No 648509). This publication reflects only its authors' view; the European Research Council Executive Agency is not responsible for any use that may be made of the information it contains. The second author was partially supported by the NCN grant 2013/08/T/ST1/00108. The work of the fourth and fifth authors was supported by the Engineering and Physical Sciences Research Council Standard Grant number EP/M025365/1. The sixth author was partially supported by the SNSF grant 200021-149111 and by CRM-ISM fellowship.}}
\author{Roman Glebov\thanks{School of Computer Science and Engineering, Hebrew University, Jerusalem 9190401, Israel. E-mail: {\tt roman.l.glebov@gmail.com}.}\and
        Andrzej Grzesik\thanks{Faculty of Mathematics and Computer Science, Jagiellonian University, \L ojasiewicza 6, 30-348 Krak\'ow, Poland. E-mail: {\tt Andrzej.Grzesik@tcs.uj.edu.pl}.}\and
	Ping Hu\thanks{Department of Computer Science and DIMAP, University of Warwick, Coventry CV4 7AL, UK. E-mail: {\tt \{p.hu,t.hubai\}@warwick.ac.uk}.}\and
\newcounter{lth}
\setcounter{lth}{4}
	Tam\'as Hubai$^\fnsymbol{lth}$\and
        Daniel Kr\'al'\thanks{Mathematics Institute, DIMAP and Department of Computer Science, University of Warwick, Coventry CV4 7AL, UK. E-mail: {\tt d.kral@warwick.ac.uk}.}\and
	Jan Volec\thanks{Department of Mathematics and Statistics, McGill University, Burnside Hall, 805 Sherbrooke West, Montreal H3A 2K6, Canada. Previous affiliation: Department of Mathematics, ETH, 8092 Z\"urich, Switzerland. E-mail: {\tt jan@ucw.cz}.}}
\date{}
\maketitle
\begin{abstract}
Let $d_i(G)$ be the density of the $3$-vertex $i$-edge graph in a graph $G$,
i.e., the probability that three random vertices induce a subgraph with $i$ edges.
Let $S\subseteq\RR^4$ be the set of all quadruples $(d_0,d_1,d_2,d_3)$ that
are arbitrary close to $3$-vertex graph densities in arbitrary large graphs.
Huang, Linial, Naves, Peled and Sudakov have recently
determined the projection of the set $S$ to the $(d_0,d_3)$ plane.
We determine the projection of the set $S$ to all the remaining planes.
\end{abstract}

\section{Introduction}

Many problems in graph theory relate to understanding possible combinations of subgraph densities in graphs.
Indeed, the study of possible subgraph densities forms a very important area of extremal graph theory,
which contains many classical results but which is also full of hard and challenging problems.
The classical results include,
e.g., Tur\'an's Theorem~\cite{bib-turan41} determining the maximum edge density in $K_r$-free graphs,
Goodman's Bound~\cite{bib-goodman59} relating the densities of $\overline{K_3}$ and $K_3$, and
Kruskal-Katona Theorem~\cite{bib-katona68,bib-kruskal63}.
On the other hand, one of the recent breakthroughs in extremal graph theory was
the description of possible densities of complete graphs in graphs with a given edge density,
which is given in the exciting work of Razborov~\cite{bib-flag11},
Nikiforov~\cite{bib-nikiforov11} and Reiher~\cite{bib-reiher16}.

While problems related to possible densities of small graphs may look innocent at the first sight, they can become incredibly challenging.
For example, determining the minimum possible sum of densities of $\overline{K_4}$ and $K_4$ is a well-known problem in graph theory,
which is open for more than five decades.
Erd\H os~\cite{bib-erdos62} conjectured this minimum to be $1/32$,
which was dispoved by Thomasson~\cite{bib-thomason89}, who constructed graphs with the sum of the two densities below $1/32$.
However, despite extensive subsequent work on the problem, e.g.,~\cite{bib-franek93+,bib-sperfeld,bib-thomason97,bib-wolf10},
there is not even a construction that is believed to provide the tight bound for this problem.
Hence, determining all possible densities of $\overline{K_4}$ and $K_4$ look completely hopeless.
On the other hand, Huang, Linial, Naves, Peled and Sudakov~\cite{bib-huang14+},
building on their results from~\cite{bib-huang+}, determined possible densities of $\overline{K_3}$ and $K_3$.
We contribute to this line of research by completing the description of possible densities of all pairs of 3-vertex graphs.
Our results are encouraging to make an attempt to describe all possible combinations of 3-vertex graph densities,
which would imply the earlier mentioned result of Razborov~\cite{bib-flag11}
on the minimum triangle density in a graph with a given edge density.

To state our results precisely, we need several definitions.
The density $d(H,G)$ of a $k$-vertex graph $H$ in a graph $G$ is the probability that
$k$ randomly chosen vertices of $G$ induce a subgraph isomorphic to~$H$.
We will be interested in densities of $3$-vertex graphs.
There are four $3$-vertex graphs:
the triangle $K_3$, the cherry $K_{1,2}$, the co-cherry $\overline{K_{1,2}}$ and the co-triangle $\overline{K_3}$;
let $H_k$, $k=0,1,2,3$, be the $3$-vertex graph with $k$ edges.
Further, let $S\subseteq\RR^4$ be the set of all quadruples $(d_0,d_1,d_2,d_3)$ such that
for every $\varepsilon>0$ and every $n\in\NN$,
there exists a graph $G$ with at least $n$ vertices such that
the density of $H_k$ in $G$ differs from $d_k$ by at most $\varepsilon$ for $k=0,1,2,3$.
This is equivalent to saying that $(d_0,d_1,d_2,d_3)\in S$ if and only if
there exists a sequence of graphs such that their number of vertices tends to infinity and
the density of $H_k$ in the graphs forming the sequence converges to $d_k$ for every $k=0,1,2,3$.
In~\cite{bib-huang14+},
the set $S$ is referred to as the set of $3$-local profiles of arbitrary large graphs.
There is also an alternative description of the set $S$ using the theory of graph limits,
which we present in Section~\ref{sec-limits}.

Let $S_{ij}$ be the projection of the set $S$ to the plane of the $i$-th and $j$-th coordinate.
Huang, Linial, Naves, Peled and Sudakov~\cite{bib-huang14+},
building on their results from~\cite{bib-huang+}, determined the projection $S_{03}$ of the set $S$.
In particular, they showed that $(d_0,d_3)\in S_{03}$ if and only if
$d_0\ge 0$, $d_3\ge 0$, $d_0+d_3\ge 1/4$ (this inequality is equivalent to Goodman's bound), and
$$d_3\le\max\{(1-d_0^{1/3})^3+3d_0^{1/3}(1-d_0^{1/3})^2,(1-\alpha)^3\}$$
where $\alpha$ is the unique root in $[0,1]$ of the equation $\alpha^3+3\alpha^2(1-\alpha)=d_0$.
The set $S_{03}$ is visualized in Figure~\ref{fig-tr-ct}.
The upper curve in Figure~\ref{fig-tr-ct} corresponds to densities of co-triangles and triangles
in the graph consisting of a complete graph and isolated vertices or
in the complement of this graph.

\begin{figure}[ht]
\begin{center}
\epsfbox{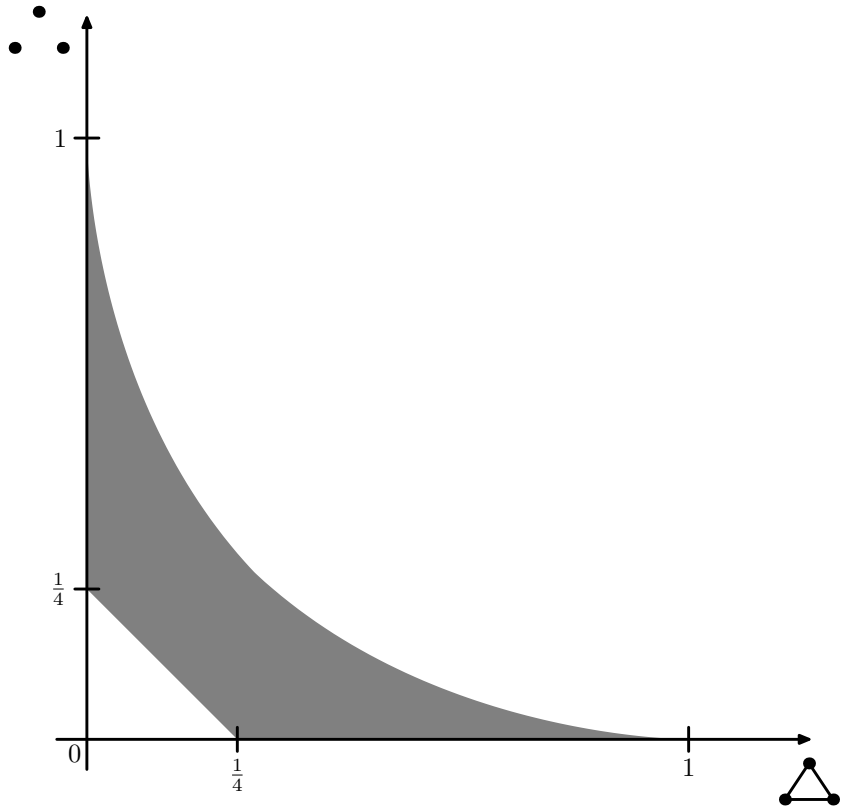}
\end{center}
\caption{Possible densities of triangles and co-triangle in graphs.}
\label{fig-tr-ct}
\end{figure}

In this paper, we determine the projections $S_{ij}$ for all the other pairs of $i$ and $j$.
By considering the complements of graphs,
it is easy to see that the projections $S_{xy}$ and $S_{(3-x)(3-y)}$ are the same.
Hence, it is enough to consider the following three projections only: $S_{12}$, $S_{13}$, and $S_{23}$,
which we consider separately in Sections~\ref{sec-tr-cc}--\ref{sec-tr-ch}.
While determining the projections $S_{12}$ and $S_{23}$ turned out to be relatively straightforward,
the projection $S_{13}$, which we consider in Section~\ref{sec-tr-cc}, was significantly more difficult to describe.
We would like to stress that although some of the proofs in Section~\ref{sec-tr-cc} use the flag algebra method,
all our proofs are computer-free and the method presents for us a very convenient way of formulating our arguments.
In the concluding Section~\ref{sec-concl},
we briefly discuss the possible structure of graphs with densities on the boundaries of the projections.

\section{Graph limits and flag algebras}
\label{sec-limits}

In this section, we briefly introduce the theory of graph limits and the flag algebra method.
The flag algebra method can be presented independently of the theory of graph limits
but since we will only apply the flag algebra method in the limit setting,
we find it more convenient to introduce it using some graph limit notation.
We also deal here only with limits of dense graphs, which we need in this paper, and
refer the reader to a recent monograph by Lov\'asz~\cite{bib-lovasz-book}
for a more detailed exposition of graph limits.

Recall that the {\em density} $d(H,G)$ of a $k$-vertex graph $H$ in a graph $G$ is the probability that
a randomly chosen subset of $k$ vertices of $G$ induces a subgraph isomorphic to $H$.
A sequence $(G_n)_{n\in\NN}$ of graphs is {\em convergent}
if the sequence of densities $d(H,G_n)$ converges for every graph $H$.
In what follows, we will always assume that if $(G_n)_{n\in\NN}$ is a convergent sequence,
then the number of vertices of the graphs in the sequence tends to infinity.

Convergent sequences of graphs can be represented by an analytic object called a graphon;
A {\em graphon} is a symmetric measurable function $W:[0,1]^2\to [0,1]$,
where symmetric stands for the property that $W(x,y)=W(y,x)$ for all $x,y\in [0,1]$.
We next define a density of a $k$-vertex graph $H$ in a graphon $W$ as follows.
A $k$-vertex {\em $W$-random graph} is the random graph obtained
by sampling $k$ points $x_1,\ldots,x_n$ independently and uniformly in the unit interval $[0,1]$ and
joining the $i$-th vertex and the $j$-th vertex of the graph by an edge with probability $W(x_i,x_j)$.
The {\em density} of a $k$-vertex graph $H$ in $W$ is the probability that
a $k$-vertex $W$-random graph is isomorphic to $H$; this density is denoted by $d(H,W)$. 
To facilitate reading we often use $d(H,W)$ where $H$ is a drawing of the graph.
We will sometimes refer to the elements of $[0,1]$ as to the vertices of $W$, and
we will say that the {\em degree} $d_W(x)$ of $x\in [0,1]$ is the integral
$$\int_{[0,1]} W(x,y)\;\mbox{d}y\,\mbox{,}$$
which is the expectation of the fraction of the vertices in a $W$-random graph adjacent to
a vertex associated with $x$.

A graphon $W$ is a {\em limit} of a convergent sequence $(G_n)_{n\in\NN}$ of graphs
if the density $d(H,W)$ is equal to the limit of the densities $d(H,G_n)$ for every graph $H$.
It is known that every convergent sequence of graphs has a limit~\cite{bib-lovasz06+}, and
this limit is unique up to certain measure preserving transformations~\cite{bib-borgs10+}.
In what follows, we use $|X|$ to denote the measure of a subset $X\subseteq [0,1]$.
If $W$ is a graphon and $A$ a non-null subset of $[0,1]$,
we define a graphon $W[A]$ {\em induced} by $A$ as follows:
fix any mapping $\varphi:[0,1]\to A$ such that $|X|=|\varphi^{-1}(X)|\cdot|A|$ for every measurable $X\subseteq A$ and
set $W[A](x,y)=W(\varphi(x),\varphi(y))$.

A {\em component} of a graphon $W$ is a non-null subset $A\subseteq [0,1]$ such that
$W$ is equal to zero almost everywhere on $A\times\overline{A}$ and
there is no subset $B$ of $A$ such that both $B$ and $A\setminus B$ are non-null and
$W$ is equal to zero almost everywhere on $B\times (A\setminus B)$.
It is not hard to show that for every graphon $W$,
there is a countable (possibly finite) collection $(C_j)_{j\in J}$ of disjoint subsets of $[0,1]$ such that
each $C_j$ is a component of $W$ and $W$ is zero almost everywhere outside $\cup_{j\in J}C_j^2$.
Note that the set $[0,1]\setminus\cup_{j\in J}C_j$ may be non-null.

The theory of graph limits can be used to present an alternative definition of the set $S$.
A point $(d_0,d_1,d_2,d_3)$ belongs to the set $S$ if and only if
there exists a graphon $W$ such that $d(H_i,W)=d_i$ for $i=0,1,2,3$.
Since the numbers $d_0$, $d_1$, $d_2$ and $d_3$ present the probabilities of sampling $3$-vertex $W$-random graphs,
the following becomes obvious.
\begin{proposition}
\label{prop-unit}
If $(d_0,d_1,d_2,d_3) \in S$, then \hbox{$d_0+d_1+d_2+d_3=1$}.
\end{proposition}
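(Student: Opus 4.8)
The plan is to observe that the quantities $d_0,d_1,d_2,d_3$ are, for any fixed graphon $W$, precisely the probabilities that a $3$-vertex $W$-random graph is isomorphic to $H_0,H_1,H_2,H_3$ respectively. Since every $3$-vertex graph has some number of edges between $0$ and $3$, the four events ``the $W$-random graph is isomorphic to $H_k$'' are pairwise disjoint and their union is the whole sample space. Hence by the additivity of probability their probabilities sum to $1$.

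\begin{proof}
Let $(d_0,d_1,d_2,d_3)\in S$. By the alternative description of $S$ given above,
there exists a graphon $W$ with $d(H_i,W)=d_i$ for $i=0,1,2,3$.
Sample a $3$-vertex $W$-random graph $F$, i.e., sample three points $x_1,x_2,x_3$
independently and uniformly in $[0,1]$ and join the $i$-th and $j$-th vertex
with probability $W(x_i,x_j)$. The graph $F$ has exactly three (unordered) pairs of vertices,
so the number of edges of $F$ lies in $\{0,1,2,3\}$, and $F$ is isomorphic to $H_k$ precisely
when $F$ has $k$ edges. Thus the events $\{F\cong H_0\},\{F\cong H_1\},\{F\cong H_2\},\{F\cong H_3\}$
are pairwise disjoint and their union is the whole probability space. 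Consequently
$$d_0+d_1+d_2+d_3=\sum_{k=0}^{3}\Pr[F\cong H_k]=\Pr\left[\bigcup_{k=0}^{3}\{F\cong H_k\}\right]=1\,\mbox{.}$$
\end{proof}

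There is essentially no obstacle here; the only point requiring the slightest care is to note that the four isomorphism types $H_0,\dots,H_3$ genuinely exhaust all graphs on three vertices, which is immediate since a $3$-vertex graph is determined up to isomorphism by its number of edges. One could equally argue directly with finite graphs: for any graph $G$ on at least $3$ vertices the densities $d(H_0,G),\dots,d(H_3,G)$ sum to $1$ because they partition the $\binom{|V(G)|}{3}$ triples, and then pass to the limit along a sequence witnessing $(d_0,d_1,d_2,d_3)\in S$; but the graphon formulation makes the statement a one-line consequence of the fact that probabilities of a partition of the sample space sum to~$1$.
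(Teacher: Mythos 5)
Your proof is correct and is exactly the argument the paper uses (the paper merely states it as ``obvious'' from the fact that $d_0,\dots,d_3$ are the probabilities of the four possible isomorphism types of a $3$-vertex $W$-random graph). Nothing further is needed.
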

Since the edge density of a graph is the average edge density in its $3$-vertex subgraphs,
we also get the following.
\begin{proposition}
\label{prop-edge}
Let $(G_n)_{n\in\NN}$ be a convergent sequence of graphs, and
let $d_e$ be its limit edge density and $d_k$ the limit density of the graph $H_k$.
It holds that $d_e=\frac{d_1+2d_2+3d_3}{3}$.
\end{proposition}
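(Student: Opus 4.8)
The plan is to prove the stated identity \emph{exactly} for every finite graph and then pass to the limit, at which point convergence of the relevant densities does all the work.

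First I would fix a graph $G$ on $n\ge 3$ vertices and evaluate, in two different ways, the expected number $X$ of edges of $G$ that lie inside a uniformly random $3$-element subset $T$ of $V(G)$. By linearity of expectation, $X$ equals $\binom{3}{2}=3$ times the probability that one fixed pair among the three sampled vertices is an edge of $G$; by symmetry this probability is exactly the edge density of $G$, namely $e(G)/\binom{n}{2}$. Hence $X=3\,e(G)/\binom{n}{2}$. On the other hand, the number of edges of $G$ inside $T$ equals $k$ precisely when the induced subgraph $G[T]$ is isomorphic to $H_k$, which happens with probability $d(H_k,G)$; therefore $X=\sum_{k=0}^{3}k\,d(H_k,G)=d(H_1,G)+2\,d(H_2,G)+3\,d(H_3,G)$. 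Equating the two evaluations of $X$ gives, for every graph $G$ on at least three vertices, the identity $3\,e(G)/\binom{n}{2}=d(H_1,G)+2\,d(H_2,G)+3\,d(H_3,G)$.

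Finally I would apply this identity to $G=G_n$ and let $n\to\infty$. The left-hand side tends to $3d_e$ by the definition of the limit edge density, while the right-hand side tends to $d_1+2d_2+3d_3$ because $(G_n)_{n\in\NN}$ is convergent and so $d(H_k,G_n)\to d_k$ for each $k=1,2,3$. Dividing by $3$ yields $d_e=\frac{d_1+2d_2+3d_3}{3}$, as claimed.

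There is no genuine obstacle in this argument. The only point worth stressing is that the averaging identity holds with \emph{equality} for each finite $n$, with no error term, so the passage to the limit requires nothing beyond the assumed convergence of the individual densities; in particular one does not need to worry about the distinction between sampling vertices with or without repetition, since the resulting discrepancies are $O(1/n)$ and vanish in the limit.
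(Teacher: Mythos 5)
Your argument is correct and is exactly the averaging identity the paper invokes (``the edge density of a graph is the average edge density in its $3$-vertex subgraphs''), just written out via the two evaluations of the expected number of edges in a random triple and then passed to the limit. Nothing is missing.
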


We now introduce basic concepts related to the flag algebra method,
which was developed by Razborov~\cite{bib-razborov07}.
The method has become a popular tool in extremal combinatorics,
see, e.g., \cite{bib-flag1, bib-flag2, bib-flagrecent, bib-flag3, bib-flag4, bib-flag5, bib-flag6, bib-flag7, bib-flag8, bib-pikhurko16+, bib-flag9, bib-flag12, bib-flag11}, and
led to solving many long standing open problems in the area.
In our exposition, we focus on presenting the main concepts only, and
refer the reader for a more detailed exposition to,
e.g., the original paper of Razborov~\cite{bib-razborov07}.

Let $\FF$ be a set of finite formal linear combinations of graphs.
We represent elements of $\FF$ as formal linear combinations of drawings of the corresponding graphs,
e.g., $\flage-\frac{1}{2}\flagHzero$.
For a graphon, let $h_W:\FF\to\RR$ be the mapping that
$$h_W\left(\sum_{j\in J} \alpha_j G_j\right)=\sum_{j\in J}\alpha_j d(G_j,W)\;\mbox{.}$$
The mapping $h_W$ respects both the addition of elements of $\FF$ and the multiplication by a scalar.
Razborov~\cite{bib-razborov07} showed that it is possible to define a multiplication of the elements of $\FF$ that
the mapping also respects this operation, i.e., $h_W(x\times y)=h_W(x)h_W(y)$ for all $x,y\in\FF$.
To keep our notation simple, we will occasionally write $d(x,W)$ instead of $h_W(x)$.

Suppose that $x,y\in\FF$. 
We write that $x=y$ if $h_W(x)=h_W(y)$;
if a graphon $W$ is not specified, we mean that the equality holds for all graphons.
For example, Proposition~\ref{prop-edge} can be rewritten as
$\flage=\frac{1}{3}\flagHone+\frac{2}{3}\flagHtwo+\flagHthree$.
Likewise, we write $x\le y$ if $h_W(x)\le h_W(y)$.
In particular, $0\le y$ if $h_W(y)$ is non-negative.

We now extend the just introduced concepts to rooted graphs.
Let $H$ be a graph that has $k$ vertices and these are labelled with integers $1,\ldots,k$.
An {\em $H$-rooted graph} is a graph with $k$ of its vertices labelled with $1,\ldots,k$ in such a way that
the labelled vertices induce a copy of $H$ in a way that preserves the labels.
Let $\FF^H$ be the set of formal linear combinations of $H$-rooted graphs.
In the analogy to $\FF$, we depict elements of $\FF^H$ as linear combinations of drawing of $H$-rooted graphs
where the roots are depicted with empty circles.
If $H$ has two or more vertices, all the elements of the sum have the copy of $H$ depicted in the same way.
For example, we will write $\flageroot-\frac{1}{2}\flagfroot$ or $\flagcherrytwo+\flagcherryone$.

Let $z_1,\ldots,z_k\in [0,1]$.
We define a mapping $h_{W,z_1,\ldots,z_k}:\FF^H\to\RR$ as follows.
If $G$ is an $H$-rooted graph with $n$ unlabeled vertices,
then $h_{W,z_1,\ldots,z_k}(G)$ is the probability that a $W$-random graph is $G$
conditioned on that the first $k$ of the vertices $x_1,\ldots,x_{k+n}\in [0,1]$ being $z_1,\ldots,z_k$ and
on that they induce a copy of $H$ preserving the labels, i.e., the vertex $x_i=z_i$ is labelled with $i$.
Note that the mapping $h_{W,z_1,\ldots,z_k}$ might not be defined for certain $k$-tuples $z_1,\ldots,z_k$.
We extend the mapping $h_{W,z_1,\ldots,z_k}$ by linearity to the whole set $\FF^H$.
Again, one can define the multiplication on $\FF^H$ in a way that the mapping respects the multiplication.

In the analogy with our earlier notation,
we write $x=y$, $x\le y$ and $0\le y$ for $x,y\in\FF^H$
if the (in)equality holds for almost every choice of the roots $z_1,\ldots,z_k$ for which 
the mapping $h_{W,z_1,\ldots,z_k}$ is well-defined.
It is possible to define a mapping $\unlabel{\cdot}{H}$ from $\FF^H$ to $\FF$ such that
the following holds for every $x\in\FF^H$ and every graphon $W$
$$\EE_{z_1,\ldots,z_k}h_{W,z_1,\ldots,z_k}(x)=h_W\left(\unlabel{x}{H}\right)\,\mbox{,}$$
where the expectation is taken with respect to the probability
with the density function proportional to the probability that $z_1,\ldots,z_k$ induce a copy of $H$.
If the graph $H$ is clear from the context, we write $\unlabel{x}{}$ instead of $\unlabel{x}{H}$.
Since the square of any number of non-negative, the following proposition easily follows.

\begin{proposition}
\label{prop-square}
Let $H$ be a labeled graph.
For every $x\in\FF^H$, it holds that $0\le\unlabel{x^2}{}$.
\end{proposition}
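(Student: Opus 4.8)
The plan is to unwind the definitions and reduce the statement to the elementary fact that the square of a real number is nonnegative, combined with the two properties of the rooted density maps recalled just above Proposition~\ref{prop-square}: that $h_{W,z_1,\ldots,z_k}$ respects multiplication, and that averaging it over the roots recovers $h_W\circ\unlabel{\cdot}{H}$.

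First I would fix an arbitrary graphon $W$. For almost every $k$-tuple $z_1,\ldots,z_k$ of roots, namely for those inducing a labelled copy of $H$ so that $h_{W,z_1,\ldots,z_k}$ is well-defined, multiplicativity gives
$$h_{W,z_1,\ldots,z_k}(x^2)=\left(h_{W,z_1,\ldots,z_k}(x)\right)^2\ge 0\,\mbox{,}$$
since the right-hand side is the square of a real number. Hence the function $(z_1,\ldots,z_k)\mapsto h_{W,z_1,\ldots,z_k}(x^2)$ is nonnegative wherever it is defined.

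Next I would integrate over the roots with respect to the probability measure whose density is proportional to the probability that $z_1,\ldots,z_k$ induce a copy of $H$. A nonnegative random variable has nonnegative expectation, so the averaging identity recalled above,
$$\EE_{z_1,\ldots,z_k}h_{W,z_1,\ldots,z_k}(x^2)=h_W\left(\unlabel{x^2}{H}\right)\,\mbox{,}$$
yields $h_W(\unlabel{x^2}{H})\ge 0$. Since $W$ was arbitrary, this is exactly the assertion $0\le\unlabel{x^2}{}$ in the notation introduced for elements of $\FF$.

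There is essentially no hard step here; the only point requiring a little care is measure-theoretic, namely that multiplicativity of $h_{W,z_1,\ldots,z_k}$ and the averaging identity hold only for almost all admissible choices of roots. But this is precisely the generality in which (in)equalities between elements of $\FF^H$ were defined above, so it does not affect the conclusion after integration. The genuinely nontrivial content, the existence of the multiplication on $\FF^H$ and of the unlabelling operator $\unlabel{\cdot}{H}$ with the stated averaging property, was established by Razborov~\cite{bib-razborov07} and is being assumed.
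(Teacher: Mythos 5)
Your proof is correct and is exactly the argument the paper intends: the paper simply remarks that the proposition follows because the square of a real number is non-negative, and your write-up spells out that one-line justification via multiplicativity of $h_{W,z_1,\ldots,z_k}$ followed by averaging over the roots. No gaps; same approach.
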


We finish this section with a simple example.
Suppose that $H$ is the single-vertex graph with its only vertex labelled with $1$.
If $W$ is a graphon and $z\in [0,1]$, then
$$h_{W,z}(\flageroot)=\int_{0,1} W(z,x)\mbox{d}x\;\mbox{.}$$
For example, if $W$ is the graphon that is equal to one on $[0,1/3]^2\cup (1/3,1]^2$ and
to zero elsewhere, then $h_{W,z}(\flageroot)=1/3$ if $z\in [0,1/3)$, and $h_{W,z}(\flageroot)=2/3$, otherwise.
Note that $\EE_{z}h_{W,z}\left(\flageroot\right)=h_W\left(\flage\right)=5/9$;
we remark that it holds that $\unlabel{\flageroot}{}=\flage$.

\section{Triangle density}
\label{sec-tr}

In this section, we briefly recall some results on the minimum triangle density
in large graphs with bounds on their minimum edge density.
Perhaps, the oldest bound of this type is the bound of Goodman~\cite{bib-goodman59},
which can be written using the flag algebra language as 
$$\flage(2\flage-1)\le\flagHthree.$$
It has been a long-standing open problem to determine the optimum function $g:\RR\to\RR$ such that
every graphon with edge density being $d_e$ has the triangle density at least $g(d_e)$.
The value of the function was known for $d_e\le 2/3$ due to work of Fisher~\cite{bib-fisher89}
until Razborov~\cite{bib-flag11} has solved this problem using his flag algebra method.

To state Razborov's result, we define the function $g_R:[0,1]\to [0,1]$ as follows.
Set $g_R(d_e)=0$ if $d_e\in [0,1/2)$ and $g_R(1)=1$.
If $d_e\in [1/2,1)$, let $k$ be the smallest integer such that $d_e\le 1-1/k$.
Let $z$ be the unique integer in the interval $(0,1/k]$ such that
$$d_e = 1-(k-1)\cdot\left(\frac{1-z}{k-1}\right)^2-z^2=\frac{(1-z)(kz+k-2)}{k-1}\;\mbox{,}$$
and define 
\begin{eqnarray*}
g_R(d_e) &=& 6\binom{k-1}{3}\left(\frac{1-z}{k-1}\right)^3+6\binom{k-1}{2}z\left(\frac{1-z}{k-1}\right)^2\\
		 &=& \frac{(1-z)^2(k-2)(2zk+k-3)}{(k-1)^2}\;\mbox{.}
\end{eqnarray*}
Note that $g_R(d_e)$ is the asymptotic triangle density in a complete $k$-partite graph such that
one of its parts contain the fraction $z$ of its vertices and the remaining $k-1$ parts have the same size.
Note that $g_R(d_e)=0$ if $d_e\in [0,1/2]$ and
\begin{equation}
g_R(d_e)=\frac{(1-\sqrt{4-6d_e})(2+\sqrt{4-6d_e})^2}{18}
\label{eq-g3}
\end{equation}
for $d_e\in [1/2,2/3]$.
Razborov's result is equivalent to the following.

\begin{theorem}[Razborov~\cite{bib-flag11}]
\label{thm-e-tr}
If $W$ is a graphon with edge density $d_e$,
then the triangle density of $W$ is at least $g_R(d_e)$.
In particular, if $(d_0,d_1,d_2,d_3)\in S$,
then $d_3\ge g_R((d_1+2d_2+3d_3)/3)$.
\end{theorem}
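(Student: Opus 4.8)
The plan is to reduce the statement to the assertion about a single graphon, dispatch the two easy ranges of the edge density, and then follow Razborov's strategy on the hard range. First, the ``in particular'' clause is immediate from the first assertion: if $(d_0,d_1,d_2,d_3)\in S$, the limit description of $S$ from Section~\ref{sec-limits} provides a graphon $W$ with $d(H_k,W)=d_k$ for $k=0,1,2,3$, the graphon form of Proposition~\ref{prop-edge} gives that the edge density of $W$ is $(d_1+2d_2+3d_3)/3$, and applying the first assertion to $W$ yields $d_3=d(H_3,W)\ge g_R((d_1+2d_2+3d_3)/3)$. So it suffices to prove that every graphon $W$ of edge density $d_e$ satisfies $\flagHthree\ge g_R(d_e)$, which I would do by cases in $d_e$. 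If $d_e\le 1/2$ then $g_R(d_e)=0$ and there is nothing to prove, since $\flagHthree\ge 0$; in this range Goodman's bound $\flage(2\flage-1)\le\flagHthree$ says no more. If $d_e\in[1/2,2/3]$ the target is the closed form~\eqref{eq-g3} (Fisher's bound), which I would derive from a \emph{finite} Cauchy--Schwarz/flag-algebra computation: an explicit nonnegative combination of squares $\unlabel{f_i^2}{}$ over types on at most two vertices, together with the linear relations among the $3$- and $4$-vertex densities, unlabels to $\flagHthree\ge g_R(\flage)$ on this interval. Goodman's bound is the contribution of the trivial square, and it is tight precisely at the endpoints $d_e\in\{1/2,2/3\}$, which are two of the joints $d_e=1-1/k$ of the piecewise curve $g_R$.

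The substantive case is $d_e\in(2/3,1)$, and here I would follow Razborov. Fix $d_e$ and let $k$ be minimal with $d_e\le 1-1/k$. Since the space of graphons modulo weak isomorphism is compact in the cut metric and subgraph densities are continuous in it, there is a graphon $W^\ast$ of edge density exactly $d_e$ minimizing the triangle density. One then analyses $W^\ast$ through first-order optimality: no rearrangement of the values of $W^\ast$ preserving $\int W^\ast$ may decrease $\int_{[0,1]^3} W^\ast(x,y)W^\ast(y,z)W^\ast(z,x)\,\mathrm{d}x\,\mathrm{d}y\,\mathrm{d}z$. Translated into the rooted flag algebra, these stationarity conditions force a short list of rooted densities to vanish for almost every choice of the roots, and from them one deduces that $W^\ast$ coincides almost everywhere with a complete multipartite graphon with parts $C_1,\dots,C_m$, $m\ge 2$ (equivalently, $W^\ast$ is $\{0,1\}$-valued, equal to $0$ on $\bigcup_i C_i\times C_i$ and to $1$ elsewhere). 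It then remains to minimize the triangle density, which for such a graphon equals $6\sum_{i<j<\ell}c_ic_jc_\ell$, over all $m$ and all part measures $c_i=|C_i|$ subject to $\sum_i c_i=1$ and $\sum_i c_i^2=1-d_e$; a Lagrange-multiplier computation, supplemented by a convexity/Newton-inequality argument, shows the minimum occurs when $m=k$ and all the $c_i$ but at most one are equal, and substituting the resulting part sizes reproduces exactly the value defining $g_R(d_e)$ in the statement. Since the same complete multipartite graphon realizes that value, the bound is best possible.

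I expect the structural step to be the main obstacle: extracting, from the first-order optimality conditions, that $W^\ast$ is (up to a null set) complete multipartite, and in particular ruling out minimizers that take intermediate values or carry extra structure inside the diagonal blocks. This is precisely the point at which Razborov's infinite family of flag-algebra inequalities enters, and converting the optimality conditions into the clean multipartite conclusion requires a careful case analysis of which rooted flags can carry positive density at a minimizer.
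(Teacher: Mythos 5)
First, a point of order: the paper does not prove this theorem at all. It is stated as a quoted result of Razborov \cite{bib-flag11} (``Razborov's result is equivalent to the following'') and used as a black box, so there is no in-paper proof to match your argument against. The question is therefore whether your proposal is itself a proof of Razborov's theorem, and it is not.

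Your reduction of the ``in particular'' clause to the graphon statement via Proposition~\ref{prop-edge} is fine, as is the trivial range $d_e\le 1/2$. Everything after that is a plan rather than a proof. For $d_e\in[1/2,2/3]$ you assert the existence of a finite sum-of-squares certificate for Fisher's bound without exhibiting one; this is exactly the content to be proved, and it is far from automatic (Razborov's whole point is that plain Cauchy--Schwarz arguments do not suffice to reach $g_R$ in general, which is why he introduces the differential method). For $d_e\in(2/3,1)$ your outline is not actually Razborov's argument: his 2008 proof is a direct flag-algebra computation combined with the differential method and an induction on $k$, not a structural classification of minimizers. The classification you invoke --- that an extremal $W^\ast$ is forced by first-order optimality to be complete multipartite --- is the much later and substantially harder Pikhurko--Razborov theorem, and even as stated it is false: as recalled in Section~\ref{sec-tr} of this paper, the minimizers have $k-2$ independent parts joined completely to each other and to a last part $C_{k-1}$ on which $W$ may be \emph{any} triangle-free graphon of the prescribed edge density, so they are not complete multipartite, and no first-order rearrangement argument alone will yield the clean structure you need. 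You acknowledge this step as ``the main obstacle,'' but that obstacle is the theorem. As it stands the proposal defers all of the substance and does not establish the bound $d_3\ge g_R(d_e)$ on $(1/2,1)$; the correct course here is simply to cite Razborov, as the paper does.
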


The set of feasible edge and triangle densities is depicted in Figure~\ref{fig-e-tr}.

\begin{figure}[ht]
\begin{center}
\epsfbox{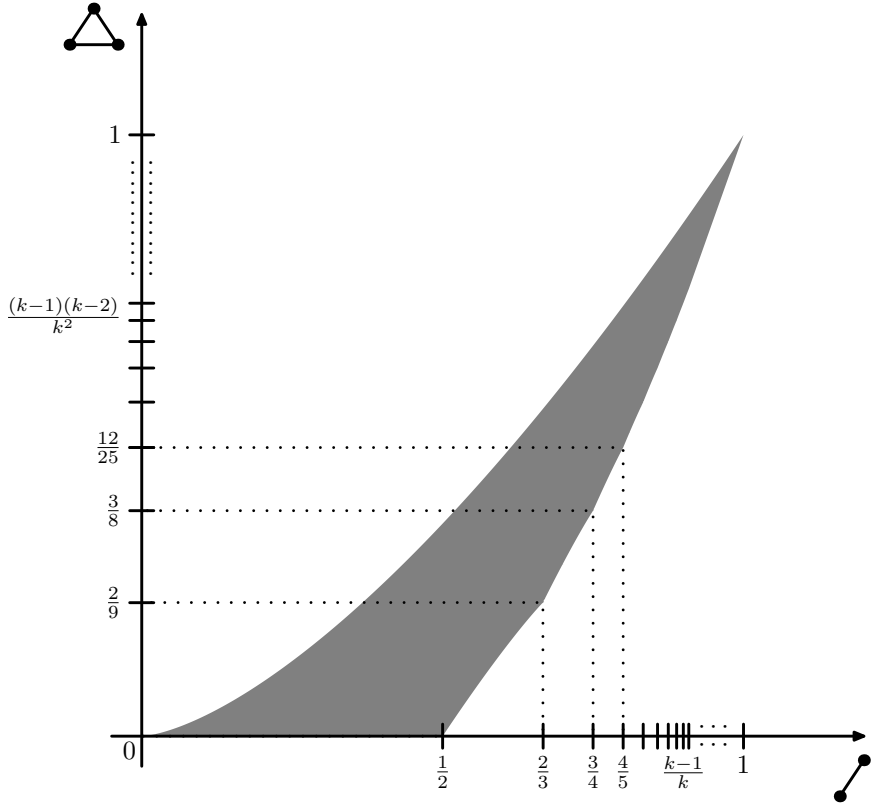}
\end{center}
\caption{Possible densities of edges and triangles in graphs.}
\label{fig-e-tr}
\end{figure}

Pikhurko and Razborov~\cite{bib-pikhurko16+} described structure of large graphs
with edge density $d_e$ and triangle density $g_R(d_e)$.
We state their result in the language of graphons.
Let $d_e$ be in $[1/2,1)$.
Let $k$ be the smallest integer such that \hbox{$d_e\le 1-1/k$}, and let $z$ be as in the definition of the function $g_R$.
Further, denote \hbox{$C_i=\left[\frac{(i-1)(1-z)}{k-1},\frac{i(1-z)}{k-1}\right)$} for $i=1,\ldots,k-2$ and
$C_{k-1}=\left[\frac{(k-2)(1-z)}{k-1},1\right]$.
Define a graphon $W$ such that $W(x,y)=0$ if $(x,y)\in C_i^2$ for $i=1,\ldots,k-2$,
$W(x,y)=1$ if $(x,y)\in C_i\times C_j$ for $i\not=j$, and
$W[C_{k-1}]$ is a graphon with the zero triangle density, and
the edge density equal to
$$\frac{2z(k-1)(1-z)}{(kz-2z+1)^2}\,\mbox{,}$$
i.e., the edge density of the complete bipartite graph with parts containing the fractions of
$\frac{(1-z)/(k-1)}{z+(1-z)/(k-1)}$ and $\frac{z}{z+(1-z)/(k-1)}$ of vertices.
For any choice of $W[C_{k-1}]$,
the graphon $W$ has the edge density equal to $d_e$ and
the triangle density equal to $g_R(d_e)$.
Pikhurko and Razborov~\cite{bib-pikhurko16+} showed that
if $(G_n)_{n\in\NN}$ is a convergent sequence of graphs such that
its limit edge density is $d_e\in [1/2,1)$ and its triangle density is $g_R(d_e)$,
then its limit is one of the graphons $W$ defined above.

A stronger lower bound on the number of triangles can be shown assuming that
every vertex is adjacent to at least the fraction $d\in [0,1]$ of all vertices.
Lo~\cite{bib-lo12} proved tight structural results if $d\in [0,3/4]$.
We state his result in the complementary form that we apply later in our considerations.

\begin{theorem}[Lo~\cite{bib-lo12}]
\label{thm-lo}
Let $d\in [1/4,1]$, and let $W$ be a graphon that minimizes $d(\flagHzero,W)$
subject to that $d_W(x)\le d$ for almost every $x\in [0,1]$.
It holds that $d_W(x)=d$ for almost every $x\in [0,1]$ and
\begin{itemize}
\item if $d\in (1/2,1]$, then graphon $W$ has a single component of measure one and $d(\flagHzero,W)=0$,
\item if $d\in (1/3,1/2]$, then $W$ has two components $C_1$ and $C_2$ of measures $|C_1|=d$ and $|C_2|=1-d$,
      $W$ is equal to one almost everywhere on $C_1^2$ and $d(\flagHzero,W[C_2])=0$,
\item if $d\in (1/4,1/3]$, then $W$ has three components $C_1$, $C_2$ and $C_3$ of measures $|C_1|=|C_2|=d$ and $|C_3|=1-2d$,
      $W$ is equal to one almost everywhere on $C_1^2\cup C_2^2$ and
      $d(\flagHzero,W[C_3])=0$, and
\item if $d=1/4$, then graphon $W$ has four components $C_1,\ldots,C_4$, each of the components has measure $1/4$, and
      $W$ is equal to one almost everywhere on \hbox{$C_1^2\cup C_2^2\cup C_3^2\cup C_4^2$}.
\end{itemize}
\end{theorem}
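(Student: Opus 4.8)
The plan is to combine a compactness argument with an inductive ``peeling'' of clique components, the induction being driven by the map $d\mapsto d/(1-d)$, which describes how the degree bound transforms when one passes from $W$ to the restriction of $W$ to the complement of such a component.

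\emph{Setup and reformulation.} A minimizer $W$ exists by compactness of the graphon space and continuity of $d(\flagHzero,\cdot)$. Expanding $(1-W_{12})(1-W_{13})(1-W_{23})$ and integrating yields the identity
$$d(\flagHzero,W)=1-3\,d(\flage,W)+3\int_0^1 d_W(x)^2\,\mbox{d}x-d(\flagHthree,W),$$
from which two facts are visible: the integrand is coordinatewise non-increasing in $W$, so adding edge weight never increases $d(\flagHzero,W)$ and one may assume a minimizer has $d_W\equiv d$ a.e.\ (for $d\le 1/2$ a short completion argument shows the missing degree can be added; in any case this is a property of the minimizer produced below); and, once $d_W\equiv d$, minimizing $d(\flagHzero,W)$ is the same as maximizing the triangle density $d(\flagHthree,W)$. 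It is also convenient to read the statement through complements: $d(\flagHzero,W)=d(\flagHthree,\overline W)$ and $d_{\overline W}=1-d_W$, so one is minimizing the triangle density of a graphon of minimum degree $\ge 1-d$; this is the regime of Theorem~\ref{thm-e-tr} and of the Pikhurko--Razborov structure theorem, except that the extremal graphons for those fail the minimum-degree bound, so a genuine refinement is needed. Finally set $\tau(d)=0$ for $d\in[1/2,1]$ and $\tau(d)=3d(1-d)(1-2d)+(1-d)^3\,\tau\!\left(\tfrac{d}{1-d}\right)$ for $d<1/2$; unwinding gives $\tau(d)=3d(1-d)(1-2d)$ on $[1/3,1/2]$ and $\tau(d)=6d(1-2d)^2$ on $[1/4,1/3]$, and one checks directly that the graphons in the four bullets realize $d(\flagHzero,\cdot)=\tau(d)$.

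\emph{The peeling architecture.} For $d\ge 1/2$ the claim is the easy part: $\tau(d)=0$, and the graphon equal to $1$ on $A^2\cup B^2$ and to $2d-1$ on $A\times B$ for a balanced partition $[0,1]=A\cup B$ is a single-component graphon with $d_W\equiv d$ in which no three vertices are pairwise non-adjacent, so $d(\flagHzero,W)=0$. For $d<1/2$ everything rests on the following.

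\emph{Structural Lemma.} If $d\le 1/2$ and $W$ minimizes $d(\flagHzero,\cdot)$ subject to $d_W\le d$, then $W$ has a component $C$ with $|C|=d$ on which $W=1$ almost everywhere.

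Granting it, the theorem follows by induction. The clique $C$ contributes no co-triangle and no co-triangle uses two vertices of $C$, so
$$d(\flagHzero,W)=3d(1-d)^2\bigl(1-d(\flage,W[\overline{C}])\bigr)+(1-d)^3\,d(\flagHzero,W[\overline{C}]);$$
by optimality $W[\overline{C}]$ must be a minimizer for the rescaled bound $d/(1-d)$ and, among such, of maximum edge density (these two requirements being compatible, both met by the degree-regular graphon the induction produces). Running the recursion along the chain $d\mapsto d/(1-d)$ — which crosses $1/2$ after at most three steps starting from $[1/4,1/3]$, via $1/4\mapsto 1/3\mapsto 1/2$ — reproduces exactly the four cases, and in the $d=1/4$ case the last piece is forced to be a clique (rather than an arbitrary zero-co-triangle graphon) because any internal non-edge of it would pair with a vertex of another component to form a co-triangle, contradicting minimality.

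\emph{The main obstacle.} The crux is the Structural Lemma: upgrading ``some near-optimal configuration'' to ``an exact clique component of measure $d$''. All routes I see exploit that, under $d_W\equiv d$, maximizing $d(\flagHthree,W)$ is governed by
$$d(\flagHthree,W)=\int_0^1\Bigl(\int_0^1\!\int_0^1 W(v,x)W(v,y)W(x,y)\,\mbox{d}x\,\mbox{d}y\Bigr)\mbox{d}v$$
together with $\int_0^1\!\int_0^1 W(v,x)W(v,y)W(x,y)\,\mbox{d}x\,\mbox{d}y\le d_W(v)^2$, with equality iff the neighbourhood of $v$ is a weighted clique. The first route is variational: from first- and second-variation conditions for the constrained maximum of $d(\flagHthree,W)$ one deduces that on a positive-measure set of vertices the neighbourhood is a clique, then propagates this through adjacency (neighbourhoods of adjacent vertices overlap and are forced to coincide) to extract a full clique component. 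The second, and probably the intended, route is a flag-algebra certificate: working in $\FF^H$ with $H$ a single labelled vertex, one produces an explicit sum of squares of rooted flags proving the sharp upper bound on $d(\flagHthree,W)$ as a function of $d$, and reads the extremal structure from the equality conditions of the individual squares. The third route is a stability argument: the theorems of Razborov and of Pikhurko and Razborov, combined with the Andr\'asfai--Erd\H os--S\'os description of triangle-free graphs of large minimum degree applied to $\overline W$, pin down the near-extremal graphons up to $o(1)$, after which a local adjustment makes the near-clique exact. In every route the delicate points are the same: the extremum is not a plain disjoint union of cliques but ``one clique plus a recursively structured remainder'', so the equality analysis is layered and must mesh with the induction; the thresholds $d=1/3$ and $d=1/4$, where $1/d$ is an integer, want a separate (but easy) treatment, since there $d(\flagHthree,W)\le d^2$ with its clique equality case already forces $1/d$ equal cliques; and the uniqueness implicit in the statement (single component, $d_W\equiv d$) holds for $d>1/2$ only up to the freedom in the zero-co-triangle part, so those clauses are best read as describing a canonical optimum.
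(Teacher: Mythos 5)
First, a framing point: the paper does not prove this theorem at all --- it is quoted from Lo~\cite{bib-lo12} and used as a black box --- so there is no internal proof to compare against. Judged on its own terms, your write-up is an architecture rather than a proof, and the gap is exactly where you locate it yourself. Everything --- the lower bound $d(\flagHzero,W)\ge\tau(d)$, the regularity $d_W\equiv d$, and all four structural bullets --- hinges on the Structural Lemma that a minimizer with $d\le 1/2$ contains a clique component of measure exactly $d$. You do not prove this; you list three candidate strategies (variational, sum-of-squares certificate, stability) and for each only describe what it would have to deliver. That lemma is the entire substance of Lo's theorem, and none of the three routes is routine: the variational route needs a genuine argument to propagate ``the neighbourhood of $v$ is a weighted clique'' from a positive-measure set of vertices to a full component of measure \emph{exactly} $d$ (why not a clique component of smaller measure, or an inexact near-clique?); the SOS route requires actually producing the certificate, which is not exhibited; and the stability route only yields approximate structure, and the ``local adjustment'' turning a near-clique into an exact clique component is itself the hard step. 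Without the lemma your argument establishes only the upper bound on the minimum given by the constructions, not the theorem.

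The surrounding scaffolding is essentially sound: the inclusion--exclusion identity for $d(\flagHzero,W)$, the monotonicity under adding edge weight, the recursion $\tau(d)=3d(1-d)(1-2d)+(1-d)^3\tau(d/(1-d))$ with its closed forms $3d(1-d)(1-2d)$ on $[1/3,1/2]$ and $6d(1-2d)^2$ on $[1/4,1/3]$, and the decomposition of $d(\flagHzero,W)$ across a clique component all check out. Two smaller points to tighten if you pursue this: the claim that one ``may assume'' $d_W\equiv d$ is part of the conclusion, and your completion argument only works when the degree-deficient set can absorb extra edge weight internally, so it should be folded into the Structural Lemma rather than assumed up front; and the assertion that $W[\overline{C}]$ simultaneously maximizes edge density and minimizes co-triangle density for the rescaled bound reads as an unjustified lexicographic optimization --- it does follow, but only via the coordinatewise monotonicity you noted (both terms of your displayed decomposition are non-increasing under adding edge weight to $W[\overline{C}]$), and that deduction should be made explicit.
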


\section{Triangle vs.~co-cherry projection}
\label{sec-tr-cc}

In this section, we determine the projection $S_{13}$.
We start with defining several auxiliary functions.
The first two functions represent the asymptotic densities of co-cherries and triangles in graphs that have the following structure.
Let $\sigma\in (1/4,1/3]$. The graph have three components:
two of the components are complete graphs on the fraction $\sigma$ of all vertices, and
the remaining component consists of two cliques $C_1$ and $C_2$,
each formed by the fraction of $(1-2\sigma)/2$ of all vertices, such that
each vertex of $C_i$ is adjacent to the fraction of
\begin{equation}
\delta_A(\sigma)=\frac{4\sigma-1}{(1-2\sigma)\sqrt{5-12\sigma}} \label{eq-dA}
\end{equation}
of the vertices of $C_{3-i}$,
i.e., each vertex of this component is adjacent to the fraction $(1+\delta_A(\sigma))\frac{1-2\sigma}{2}$ of all vertices of the graph.
An example of such a graph can be found in Figure~\ref{fig-graph-hA}.

\begin{figure}[ht]
\begin{center}
\epsfbox{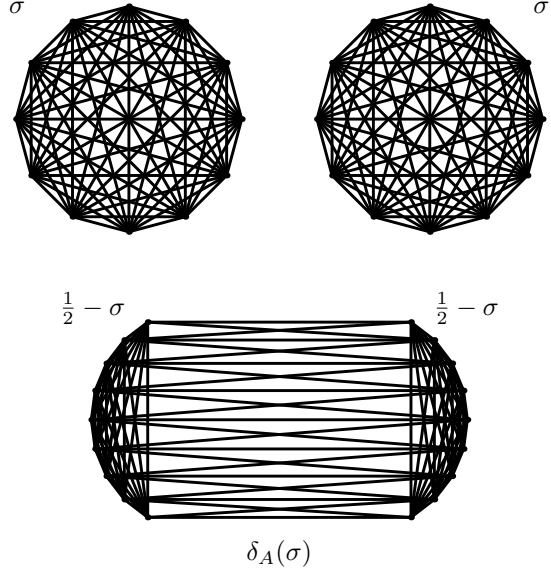}
\end{center}
\caption{A graph constructed in Section~\ref{sec-tr-cc}, which has co-cherry density $h_{A,1}(\sigma)$ and triangle density $h_{A,3}(\sigma)$, where $\sigma\in (1/4,1/3]$.}
\label{fig-graph-hA}
\end{figure}

The asymptotic co-cherry and triangle densities $h_{A,1}$ and $h_{A,3}$ in graphs having this structure are the following:
\begin{eqnarray*}
h_{A,1}(\sigma) 
                & = & \frac{9-48\sigma+114\sigma^2-120\sigma^3+3(1-2\sigma)(4\sigma-1)^2\sqrt{5-12\sigma}}{10-24\sigma} \\
h_{A,3}(\sigma) & = & \frac{2-18\sigma+57\sigma^2-60\sigma^3}{5-12\sigma}
\end{eqnarray*}
Note that $h_{A,1}(1/4)=9/16$ and $h_{A,3}(1/4)=1/16$;
these are the asymptotic co-cherry and triangle densities in a graph consisting of four equal size complete graphs,
which is the ``limit structure'' of the above graphs as $\sigma$ tends to $1/4$.
Also note that both $h_{A,1}$ and $h_{A,3}$ are increasing functions of $\sigma$ in the interval $[1/4,1/3]$ and
their co-domains are $[9/16,2/3]$ and $[1/16,1/9]$, respectively.

The next two functions $h_{B,1}$ and $h_{B,3}$
are asymptotic co-cherry and triangle densities in a graph consisting of three complete graphs,
two formed by $\sigma$ fraction of all vertices each, and
the remaining one formed by $1-2\sigma$ of the fraction of all vertices,
where $\sigma\in [1/3,1/2)$. 


The functions $h_{B,1}$ and $h_{B,3}$ are defined as follows:
\begin{eqnarray*}
h_{B,1}(\sigma) & = & 6\sigma-18\sigma^2+18\sigma^3 \\
h_{B,3}(\sigma) & = & 1-6\sigma+12\sigma^2-6\sigma^3 
\end{eqnarray*}
Note that $h_{A,1}(1/3)=h_{B,1}(1/3)=2/3$ and $h_{A,3}(1/3)=h_{B,3}(1/3)=1/9$.
In addition, $h_{B,1}(1/2)=3/4$ and $h_{B,3}=1/4$,
which are the asymptotic co-cherry and triangle densities in a graph formed by two equal size cliques.
Also note that both $h_{B,1}$ and $h_{B,3}$ are increasing functions of $\sigma$ in the interval $[1/3,1/2]$
with co-domains $[2/3,3/4]$ and $[1/9,1/4]$, respectively.

\begin{figure}[ht]
\begin{center}
\epsfbox{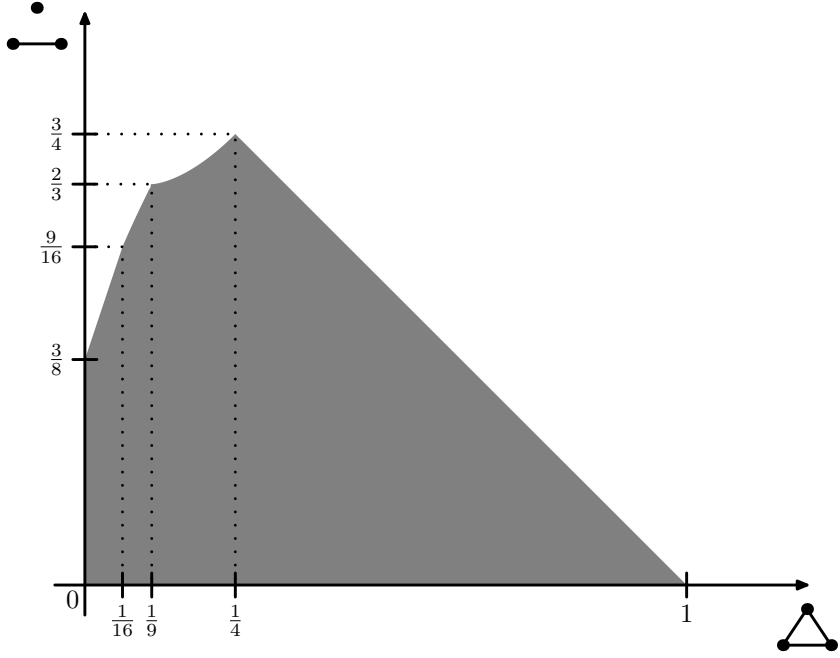}
\end{center}
\caption{Possible densities of triangles and co-cherries in graphs.}
\label{fig-tr-cc}
\end{figure}

We are now ready to define a function $g_t:[0,1]\to [0,1]$,
which will determine the most complex part of the boundary of the projection $S_{13}$.
The function $g_t$ is defined as follows:
\begin{equation*}
g_t(x)=\left\{\begin{array}{cl}
       3x+\frac{3}{8} & \mbox{if $x\in [0,1/16]$,}\\
       h_{A,1}(h_{A,3}^{-1}(x)) & \mbox{if $x\in (1/16,1/9)$,}\\
       h_{B,1}(h_{B,3}^{-1}(x)) & \mbox{if $x\in [1/9,1/4)$, and}\\
       1-x & \mbox{otherwise.}
       \end{array}
       \right.
\end{equation*}
We prove in Theorem~\ref{thm-tr-cc} that the projection $S_{13}$
is equal to the set of the points $(d_1,d_3)$ such that $0\le d_1\le g_t(d_3)$ and $d_3\in [0,1]$.
This set is visualized in Figure~\ref{fig-tr-cc}.

The proof of Theorem~\ref{thm-tr-cc} is split into several steps.
We establish three lemmas that provide different upper bounds on $d_1$ in terms of $d_3$;
each of the upper bounds is tight for a different range of the values $d_3$.
We start with proving the simplest of the lemmas,
which yields the tight upper bound on the initial linear segment
of the upper bound on $d_1$ depicted in Figure~\ref{fig-tr-cc}.



\begin{lemma}
\label{lm-curve-A}
Every point $(d_1,d_3)$ contained in $S_{13}$ satisfies $d_1\le 3d_3+3/8$.
\end{lemma}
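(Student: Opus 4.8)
We want to show $d_1 \le 3d_3 + 3/8$ for every $(d_1,d_3) \in S_{13}$, i.e. for every graphon $W$ we have $d(\flagHone,W) \le 3\,d(\flagHtwo,W)\cdot\,$... wait, $d_3 = d(\flagHthree,W)$ and $d_1 = d(\flagHone,W)$. So the target inequality in flag-algebra language is
$$\flagHone \le 3\,\flagHthree + \frac{3}{8}.$$
The plan is to prove this as an identity/inequality in the flag algebra, which by definition means $h_W(\flagHone) \le 3\,h_W(\flagHthree) + 3/8$ for every graphon $W$. Since the four densities sum to $1$ (Proposition~\ref{prop-unit}), we can rewrite $3/8 = \frac{3}{8}(\flagHzero+\flagHone+\flagHtwo+\flagHthree)$, so the claim is equivalent to a homogeneous linear inequality $\frac{3}{8}\flagHzero - \frac{5}{8}\flagHone + \frac{3}{8}\flagHtwo + \frac{27}{8}\flagHthree \ge 0$. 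The natural way to certify such an inequality is a sum-of-squares (Cauchy–Schwarz) argument in the flag algebra on the single-vertex root, using Proposition~\ref{prop-square}.

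Concretely, first I would fix the single-vertex type and work in $\FF^{\flageroot}$, the algebra of graphs rooted at one vertex. The relevant degree-one rooted flags are $\flageroot$ (the root joined to one extra vertex) with its complement. For $z \in [0,1]$, write $p = h_{W,z}(\flageroot) = d_W(z)$. One expresses each of $h_W(\flagHzero),\ldots,h_W(\flagHthree)$ as an expectation over $z$ of a polynomial in $p$ together with the "non-root" quantities; more usefully, I would decompose the three-vertex densities by conditioning on one distinguished vertex and write the left-hand minus right-hand side as $\EE_z[\text{(something nonnegative in } p_z \text{ and a second-moment term)}]$. The key algebraic step is to find real constants so that
$$\frac{3}{8}\flagHthree + \frac{3}{8} - \flagHone$$
is expressed as $\unlabel{(a\,\flageroot + b\,\overline{\flageroot} + c)^2}{} + (\text{nonnegative combination of }\flagHzero,\ldots,\flagHthree)$, i.e. a conic combination of a single square on the one-vertex root and the obvious nonnegativity of densities. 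I expect the square to be something like $\unlabel{(\flageroot - \tfrac{1}{2})^2}{}$ or $\unlabel{(\flageroot - \tfrac{3}{4})^2}{}$ up to scaling, matched against a leftover that is a nonnegative combination of the $\flagHk$'s; the constant $3/8$ and the slope $3$ strongly suggest that the extremal graphon is four equal cliques ($d_1 = 9/16$, $d_3 = 1/16$), where $9/16 = 3\cdot\tfrac1{16} + \tfrac38$ holds with equality, so the square term should vanish exactly on the disjoint-union-of-cliques graphon (where $d_W \equiv$ constant and there are no co-cherries other than those forced). That pins down the center of the square at the common degree value of the extremal configuration.

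The main obstacle is finding the correct Cauchy–Schwarz certificate — i.e. guessing the right rooted square and the right nonnegative remainder so that everything matches as a flag-algebra identity after taking $\unlabel{\cdot}{}$. Once the coefficients are guessed (a small finite linear-algebra computation, which I would do by comparing coefficients of the monomials $p^2, p, 1$ and the genuinely "two-edge-from-root" terms after applying $\unlabel{\cdot}{}$), verifying the identity is routine bookkeeping: expand $\unlabel{(\flageroot + \lambda)^2}{}$ in terms of $\flagHzero,\ldots,\flagHthree$ using the standard conversion (the square of the rooted edge-flag unlabels to a fixed linear combination of the three-vertex densities), add the nonnegative remainder, and check it equals $\frac38\flagHthree + \frac38 - \flagHone$. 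I would present the final proof as: (1) state the SOS decomposition as a displayed identity in $\FF$; (2) invoke Proposition~\ref{prop-square} for the square term and nonnegativity of densities for the remainder; (3) conclude $h_W(\flagHone) \le 3 h_W(\flagHthree) + 3/8$ for all $W$, hence $d_1 \le 3d_3 + 3/8$ on $S_{13}$, noting equality for the four-equal-cliques graphon.
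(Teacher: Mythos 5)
Your plan is essentially the paper's proof: the paper certifies $\flagHone\le 3\flagHthree+\frac{3}{8}$ by exactly the kind of single one-vertex-rooted square you describe, namely $0\le\unlabel{\left(3\flageroot-\flagfroot\right)^2}{}=9\flagHthree+\flagHtwo-\frac{5}{3}\flagHone+\flagHzero$, followed by the normalization $\flagHzero+\flagHone+\flagHtwo+\flagHthree=1$, with no additional nonnegative remainder needed. The one correction: since $3\flageroot-\flagfroot=4\flageroot-1$, the square is centered at degree $1/4$ (the common degree in the four-equal-cliques extremizer), not at $1/2$ or $3/4$ as you tentatively guessed --- your own stated principle of centering at the extremal configuration's degree already gives the right answer.
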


\begin{proof}
The statement of the lemma is equivalent to showing the following inequality
in the flag algebra language.
$$\flagHone\le3\flagHthree+\frac{3}{8}$$
By Proposition~\ref{prop-square}, we get the following:
\begin{equation}
0\le\unlabel{\left(3\flageroot-\flagfroot\right)^2}{}=9\flagHthree+\flagHtwo-\frac{5}{3}\flagHone+\flagHzero
\label{eq-A-1}
\end{equation}
Since $1=\flagHzero+\flagHone+\flagHtwo+\flagHthree$, it holds that
$$0\le 8\flagHthree-\frac{8}{3}\flagHone+1\;\mbox{,}$$
which yields the desired inequality $\flagHone\le3\flagHthree+3/8$.
\end{proof}

In the following two subsections, we show that the function $g_t(d_3)$ provides
the tight upper bound on $d_1$ for $d_3 \in (1/16, 1/9)$ and $d_3 \in [1/9,1/4)$,
respectively.

\subsection{The concave regime of $g_t$}

In this subsection, we bound $d_1$ in terms of $d_3$ for $d_3 \in (1/16,1/9)$.
Before doing so, we need to study the structure of graphons maximizing a certain linear combination of $3$-vertex graphs.
We start with a lemma saying that almost any two adjacent vertices in such graphons have the same degree.

\begin{lemma}
\label{lm-regular}
Let $\alpha\in [1,3]$, and let $W$ be a graphon maximizing $d(\flagHone-\alpha\flagHthree,W)$.
It holds for almost every pair $(x,y)\in [0,1]^2$ with $W(x,y)>0$ that $d_W(x)=d_W(y)$.
\end{lemma}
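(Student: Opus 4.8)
The plan is to use a local variation argument. Suppose for contradiction that the set of pairs $(x,y)$ with $W(x,y)>0$ but $d_W(x)\neq d_W(y)$ has positive measure. Then there exist two disjoint non-null sets $A,B\subseteq[0,1]$ such that $W$ is bounded below by some $\eta>0$ almost everywhere on $A\times B$, and the degrees on $A$ are (say) all strictly larger than the degrees on $B$ — more precisely, one can find $A,B$ and a threshold so that $d_W(x)\ge c_1 > c_2 \ge d_W(y)$ for a.e.\ $x\in A$, $y\in B$. The idea is to perturb $W$ by moving a small amount of ``edge mass'' between $A$ and $B$: for a small parameter $t$, define $W_t$ to agree with $W$ except that $W_t(x,y) = W(x,y) - t$ on $A\times B'$ and $W_t(x,y) = W(x,y) + t$ on $A\times B''$, where $B',B''$ are chosen suitably (or, more symmetrically, swap mass between $A\times A$-neighbourhoods and $B\times B$-neighbourhoods so as to keep things in $[0,1]$). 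The first step is therefore to set up a perturbation that stays a valid graphon and whose first-order effect on the relevant densities is computable.

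Next I would compute the derivative $\frac{d}{dt}\,d(\flagHone-\alpha\flagHthree,W_t)\big|_{t=0}$. Writing the two densities in integral form, the derivative is a linear functional of the perturbation, and it decomposes into a term linear in the perturbation weighted by local quantities that depend on $W$ only through the degrees $d_W(\cdot)$ (this is where the $3$-vertex nature is essential: differentiating a triple integral in one of the $\binom{3}{2}$ coordinates leaves an integral over the third vertex, i.e.\ a degree). The key computation is to express this derivative as something proportional to $\int_A\int_B \big(\text{perturbation}\big)\cdot \big(P(d_W(x)) - P(d_W(y))\big)$ for an appropriate polynomial $P$ in the degree, arising from the coefficients $1$ of $\flagHone$ and $-\alpha$ of $\flagHthree$; the hypothesis $\alpha\in[1,3]$ should guarantee that $P$ is strictly monotone (or at least that $P(c_1)\neq P(c_2)$) on the relevant range, so that by choosing the sign of the perturbation appropriately the derivative is strictly positive. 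That contradicts maximality of $W$.

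The main obstacle I anticipate is twofold. First, one must be careful that the perturbation genuinely keeps $W_t$ inside $[0,1]$ while simultaneously controlling the change in \emph{both} $d(\flagHone,\cdot)$ and $d(\flagHthree,\cdot)$; the cleanest route is probably a measure-preserving ``rotation'' of edge mass (increase on one block, decrease on another of equal size) so that lower-order conservation laws are automatic and only the genuinely relevant derivative survives. Second, and more delicate, is verifying that the coefficient polynomial $P$ coming from $\flagHone-\alpha\flagHthree$ is monotone precisely when $\alpha\in[1,3]$ — this is the place where the specific constant range in the hypothesis gets used, and it will require a short explicit calculation of $P$ (a low-degree polynomial in the degree variable) and a check of the sign of its derivative on $[0,1]$. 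Once that monotonicity is in hand, the contradiction is immediate: we can strictly increase the objective, so almost all positive-weight pairs must have equal degrees. I would also remark that the conclusion is exactly what lets one later reduce to ``degree-regular on each component'' graphons, which is presumably how the subsequent lemmas in this subsection proceed.
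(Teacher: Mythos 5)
Your overall strategy (first\-/order optimality under local perturbations) is in the same spirit as the paper's, which uses Razborov's differential method, but the key computational claim in your second paragraph is wrong, and it is exactly the crux of the lemma. Differentiating the triple integral defining $d(\flagHthree,W)$ with respect to the value at a pair $(x,y)$ does \emph{not} leave a degree: it leaves the codegree $\int_0^1 W(x,z)W(y,z)\,\mathrm{d}z$, and similarly the functional derivative of $d(\flagHone,W)$ at $(x,y)$ equals $3\bigl(1-2d_W(x)-2d_W(y)+3\!\int W(x,z)W(y,z)\,\mathrm{d}z\bigr)$. So the first\-/order effect of your mass\-/moving perturbation is governed by $1-2d_W(x)-2d_W(y)+(3-\alpha)\!\int W(x,z)W(y,z)\,\mathrm{d}z$, which is \emph{not} of the form $P(d_W(x))-P(d_W(y))$ for any polynomial $P$; it is not even antisymmetric in $x$ and $y$, and the codegree term cannot be expressed through degrees. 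Consequently the sign of the derivative cannot be controlled by the assumed degree separation $c_1>c_2$, and the contradiction you aim for does not follow. Your guess about where $\alpha\in[1,3]$ enters (monotonicity of $P$) is likewise not what happens.

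The paper gets around this by using \emph{three} local moves rather than one: deleting an edge, adding a non\-/edge, and a ``rotation'' that deletes one edge of a cherry while adding the edge closing it into a different cherry. Each move yields a first\-/order inequality valid for almost every edge/non\-/edge/cherry; these are then multiplied by suitable \emph{non\-/negative} rooted densities, unlabelled, and combined with the weights $\frac{\alpha-1}{\alpha+1}$, $\frac{\alpha-1}{\alpha+1}$, $\frac{3-\alpha}{\alpha+1}$ (non\-/negative precisely because $\alpha\in[1,3]$ --- this is the actual role of the hypothesis). In this combination all the codegree\-/type $4$\-/vertex terms cancel and what remains is exactly $-6\unlabel{\left(\flagErootone-\flagEroottwo\right)^2}{}\ge 0$; together with Proposition~\ref{prop-square} this forces $\unlabel{\left(\flagErootone-\flagEroottwo\right)^2}{}=\int_{W(x,y)>0}(d_W(x)-d_W(y))^2=0$. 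To repair your argument you would need either to reproduce this cancellation (which requires the rotation move and the weighted combination --- edge deletion and addition alone do not suffice), or to find a genuinely different perturbation whose first\-/order effect is free of codegrees; as written, the proposal has a gap at its central step.
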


\begin{proof}
The statement of the lemma is equivalent to establishing the following equality.
\begin{equation}
\unlabel{\left((\flagErootone+\flagErootboth)-(\flagEroottwo+\flagErootboth)\right)^2}{}=
\unlabel{\left(\flagErootone-\flagEroottwo\right)^2}{}=0
\label{eq-reg-1}
\end{equation}
Indeed, if $x$ and $y$ are the two root vertices in (\ref{eq-reg-1}),
then the expression on the left side of (\ref{eq-reg-1}) is equal to $\unlabel{\left(d_W(x)-d_W(y)\right)^2}{}$.
This implies that the left side of (\ref{eq-reg-1}) is equal to
$$\int_{(x,y)\in [0,1]^2, W(x,y)>0} \left(d_W(x)-d_W(y)\right)^2\;\mbox{d}x\,\mbox{d}y\,\mbox{;}$$
this integral is zero if and only if the assertion of the lemma holds.
Hence, we need to prove (\ref{eq-reg-1}).

We now derive several inequalities using the differential method described in \cite[Subsection~4.3]{bib-razborov07} that
must be satisfied by any graphon $W$ maximizing $d(\flagHone-\alpha\flagHthree,W)$.
The first inequality corresponds to deleting an edge between two distinguished vertices, and
it directly follows from~\cite[Theorem~4.5]{bib-razborov07}.
The inequality (\ref{eq-reg-2}) holds for almost every choice of an edge.
\begin{equation}
0\ge\partial_{\flagEroot}(\flagHone-\alpha\flagHthree)=3(\flagErootone+\flagEroottwo-\flagErootzero+\alpha\flagErootboth)
\label{eq-reg-2}
\end{equation}
Using (\ref{eq-reg-2}), we derive that
\begin{eqnarray}
0 & \le & 6\unlabel{(\flagErootzero-\flagErootone-\flagEroottwo-\alpha\flagErootboth)\times(\flagErootone+\flagEroottwo)}{} = \nonumber\\
  & & \flagfourtwo-3\flagfourthree-(1+\alpha)\flagfourseven-4\flagfoureight-2\alpha\flagfournine\,\mbox{,}
\label{eq-reg-3}
\end{eqnarray}
which also follows from~\cite[Corollary~4.6]{bib-razborov07}.
We next consider the operation of adding an edge and we obtain following the lines of reasoning for deleting an edge that
the following holds for almost every choice of a non-edge.
\begin{equation}
0\ge\partial_{\flagFroot}(\flagHone-\alpha\flagHthree)=3(\flagFrootzero-\flagFrootone-\flagFroottwo-\alpha\flagFrootboth)
\label{eq-reg-4}
\end{equation}
Using (\ref{eq-reg-4}), we get that it holds that
\begin{eqnarray}
0 & \le & 12\unlabel{(\flagFrootone+\flagFroottwo-\flagFrootzero+\alpha\flagFrootboth)\times\flagFrootboth}{} = \nonumber\\
  & & -\flagfourtwo-3\flagfourthree+2\flagfoursix+2\flagfourseven+4\alpha\flagfoureight+2\alpha\flagfournine\;\mbox{.}
\label{eq-reg-5}
\end{eqnarray}
The final operation that we consider is the following operation:
consider a cherry labelled in such a way that the leaves are the first and the third vertices.
The operation that we consider is the operation of removing the edge between the second and third vertices, and
adding the edge between the first and the third vertices.
Following the way the inequalities (\ref{eq-reg-2}) and (\ref{eq-reg-4}) were derived,
we obtain that the following holds for almost every choice of a cherry.
$$0\ge \flagcherrytwo-\flagcherryone+\alpha\flagcherrytwothree-\alpha\flagcherryonethree$$
Hence, it holds that
\begin{equation}
0 \le 12\unlabel{\flagcherryone-\flagcherrytwo+\alpha\flagcherryonethree-\alpha\flagcherrytwothree}{}=
      -3\flagfourthree+\flagfoursix-\alpha\flagfourseven+4\alpha\flagfoureight\;\mbox{.}
\label{eq-reg-7}
\end{equation}
We get the following inequality by summing
the inequality (\ref{eq-reg-3}) multiplied by $\frac{\alpha-1}{\alpha+1}$,
(\ref{eq-reg-5}) multiplied by $\frac{\alpha-1}{\alpha+1}$, and
(\ref{eq-reg-7}) multiplied by $\frac{3-\alpha}{\alpha+1}$.
\begin{equation}
0 \le -3\flagfourthree+\flagfoursix-\flagfourseven+4\flagfoureight
\label{eq-reg-8}
\end{equation}
Since it holds that
$$\unlabel{\left(\flagErootone-\flagEroottwo\right)^2}{}=\frac{1}{2}\flagfourthree-\frac{1}{6}\flagfoursix
                                                        +\frac{1}{6}\flagfourseven-\frac{2}{3}\flagfoureight\,\mbox{,}$$
which is a multiple of $-1/6$ of the right side of (\ref{eq-reg-8}),
the left side of (\ref{eq-reg-1}) cannot be positive.
Since the left side of (\ref{eq-reg-1}) is non-negative by Proposition~\ref{prop-square},
the equality (\ref{eq-reg-1}) now follows, which finishes the proof of the lemma.
\end{proof}

The next lemma concerns degrees of non-adjacent vertices.

\begin{lemma}
\label{lm-min-degree}
Let $\alpha\in [1,3)$, and let $W$ be a graphon maximizing $d(\flagHone-\alpha\flagHthree,W)$.
Then for almost every pair $(x,y)\in [0,1]^2$ with $W(x,y)<1$ the inequality \hbox{$d_W(x)+d_W(y)\ge 1/2$} holds.
\end{lemma}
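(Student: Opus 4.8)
The statement asserts a lower bound on the degree sum of almost every non-adjacent pair of vertices in an optimal graphon $W$; the natural approach mirrors the proof of Lemma~\ref{lm-regular}, using the differential method of Razborov to produce an inequality that must hold at the optimum and then combining it with square-positivity. First I would write the claim in flag-algebra form: the desired conclusion is equivalent to saying that, for the $H$-rooted setting where $H$ is a non-edge on two vertices, the quantity $\unlabel{\max\{0,\,1/2-\flagFrootone-\flagFroottwo-2\flagFrootboth\}}{}$ vanishes, or more usefully that a suitable nonnegative combination built from $(1/2-d_W(x)-d_W(y))$ restricted to the set $W(x,y)<1$ is forced to be zero. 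The cleanest route is to find coefficients so that some explicit square $\unlabel{(\cdots)^2}{}$ over $\FF^{\flagFroot}$ equals a nonpositive combination of the optimality inequalities, exactly as the chain \eqref{eq-reg-3}--\eqref{eq-reg-8} was used above.

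Concretely, I would reuse the three optimality inequalities already derived in the proof of Lemma~\ref{lm-regular}: the edge-deletion inequality \eqref{eq-reg-2}, the non-edge-addition inequality \eqref{eq-reg-4}, and the cherry-swap inequality preceding \eqref{eq-reg-7}, together with their ``multiplied-out'' consequences \eqref{eq-reg-3}, \eqref{eq-reg-5}, \eqref{eq-reg-7}. Working now in $\FF^{\flagFroot}$ (rooted at a non-edge), the relevant degree expression is $\flagFrootone+\flagFroottwo+2\flagFrootboth$, whose unlabeling relates to the $3$-vertex densities. The plan is: (i) multiply \eqref{eq-reg-4} by a well-chosen nonnegative flag in $\FF^{\flagFroot}$ — plausibly by $(\flagFrootzero+\flagFrootone+\flagFroottwo+\flagFrootboth)$ minus a multiple encoding the target $1/2$, or by $\flagFrootzero$ itself — to get an inequality of the form $0\le \sum c_i(\textrm{4-vertex densities})$; (ii) similarly exploit \eqref{eq-reg-3} and \eqref{eq-reg-7}; (iii) solve a small linear system in the multipliers (the $\alpha$-dependent coefficients $\frac{\alpha-1}{\alpha+1}$, $\frac{3-\alpha}{\alpha+1}$ worked in Lemma~\ref{lm-regular}, and something analogous, finite for $\alpha<3$, should work here) so that the resulting combination equals exactly $-c\cdot\unlabel{\big(\textrm{that square}\big)}{}$ for some $c>0$, where the square is the one whose unlabeling computes $\int_{W(x,y)<1}\big(1/2-d_W(x)-d_W(y)\big)_{+}^{2}$ or an equivalent expression vanishing iff the conclusion holds. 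Square-positivity (Proposition~\ref{prop-square}) then forces that integral to be $0$, giving the claim for a.e.\ non-adjacent pair.

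One subtlety I would handle carefully is that ``degree sum $\ge 1/2$'' is an inequality, not an equality, so the clean trick of Lemma~\ref{lm-regular} (identifying a square with a multiple of an optimality combination) does not literally apply; instead I expect to need the truncated square $\big((1/2-d_W(x)-d_W(y))^{+}\big)^2$, which is not itself a flag-algebra square. The standard device is to argue on the set $A=\{(x,y):W(x,y)<1,\ d_W(x)+d_W(y)<1/2\}$: if $A$ has positive measure, apply the variational inequality \eqref{eq-reg-4} together with Lemma~\ref{lm-regular} (degrees are constant along edges) and the earlier inequalities restricted to configurations touching $A$, and derive a strict improvement to $d(\flagHone-\alpha\flagHthree,W)$, contradicting maximality. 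The role of $\alpha<3$ (as opposed to $\alpha\le 3$) is precisely that it keeps the relevant multiplier $\frac{3-\alpha}{\alpha+1}$ strictly positive and the improvement genuinely strict; I would flag that the boundary case $\alpha=3$ is excluded for this reason.

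The main obstacle I anticipate is choosing the right flag to multiply \eqref{eq-reg-4} by, and pairing it with \eqref{eq-reg-3} and \eqref{eq-reg-7}, so that all $4$-vertex density terms cancel except for a manifestly nonnegative square and a term proportional to the degree-sum deficit on non-edges — this is the combinatorial heart of the argument and will require a short but nontrivial search over coefficients, guided by the requirement that the final identity reduce to $\unlabel{(\flagFrootone+\flagFroottwo+2\flagFrootboth-\tfrac14)^2}{}$ or similar on the relevant rooted algebra. Once the correct linear combination is in hand, the remainder (expanding products of rooted flags into $4$-vertex densities and verifying the identity coefficient-by-coefficient) is routine bookkeeping of the same flavor as \eqref{eq-reg-3}--\eqref{eq-reg-8}, so I would present it compactly and relegate the coefficient verification to a displayed computation.
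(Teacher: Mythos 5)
You have correctly identified that the edge-addition inequality (\ref{eq-reg-4}) is the engine of the proof, but the machinery you propose to build around it is both unnecessary and, as you yourself observe, obstructed: the conclusion is a pointwise inequality on non-edges, so there is no flag-algebra square whose unlabelling detects it, and no choice of multipliers for (\ref{eq-reg-3}), (\ref{eq-reg-5}), (\ref{eq-reg-7}) will make the four-vertex terms collapse to the truncated square you describe. The idea you are missing is that (\ref{eq-reg-4}) is already a \emph{pointwise} statement, holding for almost every rooted non-edge $(x,y)$, and the lemma follows from it by pure arithmetic, with no unlabelling, no products, and no auxiliary inequalities. For almost every non-edge the four conditional densities satisfy $\flagFrootzero+\flagFrootone+\flagFroottwo+\flagFrootboth=1$, so (\ref{eq-reg-4}) gives
$$0\le \flagFrootone+\flagFroottwo+\alpha\flagFrootboth-\flagFrootzero = 2\flagFrootone+2\flagFroottwo+(\alpha+1)\flagFrootboth-1\le 2\flagFrootone+2\flagFroottwo+4\flagFrootboth-1\,\mbox{,}$$
using $\alpha\le 3$ and $\flagFrootboth\ge 0$ in the last step; since $\flagFrootone+\flagFroottwo+2\flagFrootboth=d_W(x)+d_W(y)$, this is exactly $d_W(x)+d_W(y)\ge 1/2$.

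Your fallback device (``argue on the bad set $A$ of positive measure and derive a strict improvement'') is this same computation read in contrapositive, but you never supply the one step that makes it work --- eliminating $\flagFrootzero$ via the normalization --- and instead predict a nontrivial search over coefficients where none is needed. Your guess about the role of the hypothesis $\alpha<3$ (keeping the multiplier $\frac{3-\alpha}{\alpha+1}$ strictly positive) is also off target: that multiplier belongs to the proof of Lemma~\ref{lm-regular}; here all that is used is $\alpha+1\le 4$. As written, the proposal's main line of attack would not terminate, and the correct line is only gestured at without its essential step, so the argument is incomplete.
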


\begin{proof}
Following the reasoning in the proof of Lemma~\ref{lm-regular},
we get that almost every choice of a non-edge,
i.e., almost every pair $(x,y)\in [0,1]^2$ with $W(x,y)<1$, satisfies (\ref{eq-reg-4}).
Hence, it holds that
$$0 \le \alpha\flagFrootboth+\flagFrootone+\flagFroottwo-\flagFrootzero=
        (\alpha+1)\flagFrootboth+2\flagFrootone+2\flagFroottwo-1 \le 
	4\flagFrootboth+2\flagFrootone+2\flagFroottwo-1 $$
for almost every choice of non-edge.
Since $\flagFrootone+\flagFroottwo+2\flagFrootboth$ is equal to $d_W(x)+d_W(y)$,
the lemma now follows.
\end{proof}

To get the desired bound on the co-cherry density,
we analyze an optimization problem,
which involves the derivative of the function $g_t$ in the interval $(1/16,1/9)$.
Before stating the lemma, it is useful to compute this derivative.
We start by investigating the derivatives of the functions $h_{A,1}$ and $h_{A,3}$:
\begin{eqnarray*}
h'_{A,1}(\sigma) & = & \frac{6(4\sigma-1)(60\sigma^2-51\sigma+11)(1+\sqrt{5-12\sigma})}{(5-12\sigma)^2}\\
h'_{A,3}(\sigma) & = & \frac{6(4\sigma-1)(60\sigma^2-51\sigma+11)}{(5-12\sigma)^2}
\end{eqnarray*}
It now follows that the derivative of the function $g_t$ at a point $x\in (1/16,1/9)$ is the following:
$$g'_t(x)=1+\sqrt{5-12h_{A,3}^{-1}(x)}\;\mbox{.}$$
In particular, $g'_t$ is decreasing in the interval $(1/16,1/9)$ and $2<g'_t(x)<1+\sqrt{2}$.

\begin{lemma}
\label{lm-curve-B}
Let $x\in (1/16,1/9)$.
The following inequality holds for every point $(d_1,d_3)\in S_{13}$:
$$d_1-g'_t(x)d_3\le g_t(x)-g'_t(x)x\;\mbox{.}$$
In particular, $d_1\le g_t(d_3)$ for every point $(d_1,d_3)\in S_{13}$ with $d_3\in (1/16,1/9)$.
\end{lemma}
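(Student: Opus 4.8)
The plan is to pass to the flag-algebra reformulation, reduce the first assertion to a single extremal problem for graphons, and then exploit the structural information about its maximizer provided by Lemmas~\ref{lm-regular} and~\ref{lm-min-degree}. Set $\sigma_0:=h_{A,3}^{-1}(x)\in(1/4,1/3)$ and $\alpha:=g_t'(x)=1+\sqrt{5-12\sigma_0}$. By the computation of $g_t'$ carried out just before the lemma, $2<\alpha<1+\sqrt2$; in particular $\alpha\in[1,3)$, so both Lemma~\ref{lm-regular} and Lemma~\ref{lm-min-degree} apply to any graphon that maximizes $d(\flagHone-\alpha\flagHthree,\cdot)$. The first assertion of the lemma is precisely the scalar inequality $\flagHone-\alpha\flagHthree\le g_t(x)-\alpha x$, that is, $d(\flagHone-\alpha\flagHthree,W)\le g_t(x)-\alpha x$ for every graphon $W$. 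Since the graph of Figure~\ref{fig-graph-hA} with parameter $\sigma_0$ has co-cherry density $h_{A,1}(\sigma_0)=g_t(x)$ and triangle density $h_{A,3}(\sigma_0)=x$, it attains equality, so $g_t(x)-\alpha x$ is in fact the maximum of $d(\flagHone-\alpha\flagHthree,\cdot)$ once the inequality is proved. The ``in particular'' part then follows at once: for a point $(d_1,d_3)\in S_{13}$ with $d_3\in(1/16,1/9)$, applying the first assertion with $x:=d_3$ gives $d_1-g_t'(d_3)d_3\le g_t(d_3)-g_t'(d_3)d_3$, hence $d_1\le g_t(d_3)$.

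To prove the inequality I would fix a graphon $W$ maximizing $d(\flagHone-\alpha\flagHthree,\cdot)$; such a $W$ exists because subgraph densities are continuous on the compact space of graphons. By Lemma~\ref{lm-regular}, $d_W$ is constant along almost every pair of positive density, so each level set $\{z:d_W(z)>t\}$ is a union of whole components of $W$; consequently $d_W$ equals a constant $\delta_j$ on every component $C_j$ of $W$, and $\delta_j\le|C_j|=:c_j$. Any vertex outside $\bigcup_jC_j$ has degree $0$, so Lemma~\ref{lm-min-degree} applied to such a pair of vertices forces $[0,1]\setminus\bigcup_jC_j$ to be null, i.e.\ $\sum_jc_j=1$. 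Lemma~\ref{lm-min-degree} applied to cross pairs gives $\delta_i+\delta_j\ge1/2$ for distinct components, and applied inside a component $C_j$ on which $W$ is not almost everywhere $1$ it gives $\delta_j\ge1/4$ and hence $c_j\ge1/4$ for every such ``non-clique'' component; so $W$ has at most three non-clique components. Decomposing the densities over the components, a clique component of size $c$ contributes $c^2$ to the edge density, $c^3$ to the triangle density and $3c^2(1-c)$ to the co-cherry density, hence $3c^2-(3+\alpha)c^3$ to the objective; a non-clique component $C_j$ contributes $c_j\delta_j$ to the edge density, $c_j^3\,d(\flagHthree,W[C_j])$ to the triangle density and $3c_j\delta_j(1-c_j)+c_j^3\,d(\flagHone,W[C_j])$ to the co-cherry density.

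What remains is to pin down the non-clique components and then to optimize a function of finitely many real parameters. Propositions~\ref{prop-unit} and~\ref{prop-edge} give, for any graphon $U$ of edge density $p$, the identity $d(\flagHone,U)-\alpha\,d(\flagHthree,U)=2-3p-2\,d(\flagHzero,U)-(\alpha-1)\,d(\flagHthree,U)$. Since $W$ is optimal and replacing $W[C_j]$ by another graphon of the same edge density $p_j=\delta_j/c_j$ affects only the contribution of $C_j$, each $W[C_j]$ (rescaled to $[0,1]$) must minimize $2\,d(\flagHzero,\cdot)+(\alpha-1)\,d(\flagHthree,\cdot)$ over all graphons of edge density $p_j\in[\tfrac1{4c_j},1]$. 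Combining this with the first-order inequalities recorded in the proof of Lemma~\ref{lm-regular} (the relations~(\ref{eq-reg-2})--(\ref{eq-reg-8})), with Lemma~\ref{lm-min-degree}, and with Theorem~\ref{thm-lo}, I expect one can show that every non-clique component consists of two equal cliques joined by a biregular bipartite graph of constant density --- the ``blob'' of Figure~\ref{fig-graph-hA}, which has co-triangle density $0$ --- and that in fact the clique components together with at most one such blob are arranged exactly as in Figure~\ref{fig-graph-hA}. Substituting these shapes into the identity above expresses $d(\flagHone,W)-\alpha\,d(\flagHthree,W)$ as an explicit function of the sizes of the cliques and of the size and bipartite density of the blob; maximizing that function --- together with a separate check of the degenerate configurations of three equal cliques (giving $d_3=1/9$) and of four equal cliques (giving $d_3=1/16$) --- should give that the maximum is attained by two cliques of size $\sigma_0$ and one blob of size $1-2\sigma_0$ with bipartite density $\delta_A(\sigma_0)$, of value $h_{A,1}(\sigma_0)-\alpha\,h_{A,3}(\sigma_0)=g_t(x)-\alpha x$. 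The main obstacle is exactly this reduction step: ruling out ``diffuse'' non-clique components, bounding their number down to one, and identifying each of them with the blob of Figure~\ref{fig-graph-hA}; this is where Theorem~\ref{thm-lo} and the differential-method inequalities from the proof of Lemma~\ref{lm-regular} carry the weight, whereas the concluding few-variable optimization, though lengthy, is routine calculus.
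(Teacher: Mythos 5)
Your setup matches the paper's: fix $\alpha=g'_t(x)\in(2,1+\sqrt2)$, take an extremal graphon $W_0$ for $d(\flagHone-\alpha\flagHthree,\cdot)$, use Lemma~\ref{lm-min-degree} to kill the part outside the components and to force $|C_i|+|C_j|\ge 1/2$ (hence at most four components, with the four-component case degenerating to four equal cliques and handled by concavity of $g_t$), and use Lemma~\ref{lm-regular} to make the degree constant on each component. Up to that point you are reproducing the paper's argument.

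The genuine gap is in your treatment of a single component. You propose to show that each $W_0[C_j]$ minimizes $2\,d(\flagHzero,\cdot)+(\alpha-1)\,d(\flagHthree,\cdot)$ over all graphons of the \emph{same edge density}, and then to deduce that every non-clique component is exactly the two-cliques-plus-biregular-bipartite ``blob'' of Figure~\ref{fig-graph-hA}. That auxiliary extremal problem (a weighted combination of co-triangle and triangle densities at fixed edge density) is itself a nontrivial extremal problem that none of the quoted results solves, and you leave it at ``I expect one can show\dots''; moreover the structural conclusion you aim for is stronger than what is true (the extremal component is not unique --- any constant-degree graphon with zero co-triangle density works) and stronger than what is needed. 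The paper's route avoids this entirely: because the degree is constant on $C_j$, the identities~(\ref{eq-B-4}) pin down $d(\flagHtwo,W_0[C_j])$ and $d(\flagHone,W_0[C_j])$ in terms of $\delta_j$ and $t_j=d(\flagHthree,W_0[C_j])$, so the only remaining freedom is $t_j$, which enters the objective with the positive coefficient $(3-\alpha)|C_j|^3$. Hence $W_0[C_j]$ must minimize $d(\flagHzero,\cdot)$ subject to the degree bound $\delta_j$, which is precisely the setting of Lo's theorem (Theorem~\ref{thm-lo}); since $C_j$ is a single component, that theorem yields $\delta_j>1/2$ and $d(\flagHzero,W_0[C_j])=0$, and these two facts alone reduce the lemma to the three-variable-pair optimization~(\ref{eq-B-7}) settled in Proposition~\ref{prop-max}. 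Without this (or an actual proof of your auxiliary claim), your argument does not close; and the concluding optimization you call ``routine calculus'' is in the paper a full appendix of case analysis, so it should not be waved away either.
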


\begin{proof}
Fix $x\in (1/16,1/9)$ for the proof.
We need to show that if $W$ is a graphon maximizing $d(\flagHone-g'_t(x)\flagHthree,W)$,
then $d(\flagHone-g'_t(x)\flagHthree,W)\le g_t(x)-g'_t(x)x$.
Set $\alpha=g'_t(x)$ and
fix a graphon $W_0$ maximizing $d(\flagHone-\alpha\flagHthree,W)$.
Further, let $(C_j)_{j\in J}$ be the set of components of graphon $W_0$ with positive measure.

Let $D=[0,1]\setminus \cup_{j\in J}C_j$.
Observe that $W_0$ is zero almost everywhere on $D\times [0,1]$.
In particular, $d_{W_0}(x)=0$ for almost every $x\in D$.
Hence, Lemma~\ref{lm-min-degree} implies that $D$ is null.
Consequently, we can assume that $\cup_{j\in J}C_j$ is equal to $[0,1]$
since the set can be added to one of the components in the collection $(C_j)_{j\in J}$
without violating the constraints.

Suppose that $|J|\not=1$.
Since $d_{W_0}(x)\le |C_j|$ for every $j\in J$ and almost every $x\in C_j$, 
Lemma~\ref{lm-min-degree} implies that $|C_j|+|C_{j'}| \ge 1/2$ for every distinct $j,j'\in J$.
This yields that the number of components is at most four;
otherwise, there would be two components with the sum of their measures less than $1/2$.

If $W_0$ has exactly four components, then Lemma~\ref{lm-min-degree} gives that they are of equal size and each of them is a clique. For such $W_0$ we have $d_3 = 1/16$ and $d_1 = g_t(1/16)$, then since $g_t$ is continuous in $[1/16, 1/9]$ and $g_t'$ is decreasing in $(1/16, 1/9)$, we have $g_t(x) \ge d_1 + g_t'(x)(x-d_3)$. So $W_0$ fulfills the
desired inequality.

Hence, we can assume that $J=\{1,\ldots,|J|\}$ and $|J|$ is one, two or three.

Lemma~\ref{lm-regular} implies that
there exist $\delta_j$, $j\in J$, such that $d_{W_0}(x)=\delta_j|C_j|$ for almost every $x\in C_j$.
Otherwise, there would exist a real $\delta'$, a partition of $C_j$ to two non-null sets $C'$ and $C''$ such that
$d_{W_0}(x)\le \delta'$ for almost every $x\in C'$ and $d_{W_0}(x)>\delta'$ for almost every $x\in C''$.
Since ${W_0}$ is not zero almost everywhere on $C'\times C''$,
Lemma~\ref{lm-regular} would yield a contradiction.

Let $t_j=d(\flagHthree,W_0[C_j])$ for $j\in J$.
Observe that
$$\flagHtwo=3\unlabel{\flageroot^2}{}-3\flagHthree\mbox{,}\;
  \flagHone=3\flage-2\flagHtwo-3\flagHthree
  \mbox{ and }
  \flagHzero=1-\flagHthree-\flagHtwo-\flagHone
  \,\mbox{.}$$
Since $d_{W_0}(x)=\delta_j|C_j|$ for almost every $x\in C_j$, it follows that
\begin{eqnarray}
d(\flagHthree,W_0[C_j]) & = & t_j \nonumber\\
d(\flagHtwo,W_0[C_j]) & = & 3\delta_j^2-3t_j \nonumber\\
d(\flagHone,W_0[C_j]) & = & 3\delta_j-6\delta_j^2+3t_j \nonumber\\
d(\flagHzero,W_0[C_j]) & = & 1-3\delta_j+3\delta_j^2-t_j \label{eq-B-4}
\end{eqnarray}
In addition, note that $d(\flage,W_0[C_j]) = \delta_j$.
It now follows that
\begin{eqnarray}
d(\flagHthree,W_0) & = & \sum_{j\in J}t_j|C_j|^3 \label{eq-B-1}\\
d(\flagHone,W_0) & = & \sum_{j\in J} \left(3\delta_j-6\delta_j^2+3t_j\right)|C_j|^3+3\delta_j|C_j|^2\left(1-|C_j|\right)\label{eq-B-2}
\end{eqnarray}
The equalities (\ref{eq-B-1}) and (\ref{eq-B-2}) yield that
\begin{equation}
d(\flagHone-\alpha\flagHthree,W_0)=K+\sum_{j\in J} (3-\alpha)d(\flagHthree,W_0[C_j])|C_j|^3\;\mbox{,}\label{eq-B-3}
\end{equation}
where $K$ depends on $\delta_j$'s and $|C_j|$'s only. 

Fix $j\in J$ for this and the next paragraph.
We now show that the graphon $W_0[C_j]$ minimizes $d(\flagHzero,W)$ among all graphons $W$ such that $d_W(x)=\delta_j$ for almost every $x\in [0,1]$.
Suppose that there exists a graphon $W$ such that $d(\flagHzero,W)<d(\flagHzero,W_0[C_j])$ and
$d_W(x)=\delta_j$ for almost every $x\in [0,1]$, and
consider the graphon $W'$ that is equal to $W_0$ everywhere outside $C_j^2$ and that satisfies $W'[C_j]=W$.
By (\ref{eq-B-4}), it holds that $d(\flagHthree,W_0[C_j])<d(\flagHthree,W'[C_j])$,
and so $d(\flagHone-\alpha\flagHthree,W_0)<d(\flagHone-\alpha\flagHthree,W')$ by (\ref{eq-B-3})
since $\alpha\in (2,1+\sqrt{2})$.
However, this contradicts the choice of $W_0$.

We claim that $\delta_j>1/2$ and $d(\flagHzero,W_0[C_j])=0$.
If $W_0[C_j]$ is equal to one almost everywhere on $C_j^2$, then $\delta_j=1$ and the claim follows.
Otherwise, we can apply Lemmas~\ref{lm-regular} and \ref{lm-min-degree} to two vertices of $C_j$ and conclude that $2\delta_j|C_j| \ge 1/2$,
which yields that $\delta_j \ge 1/4$.
Since every graphon $W$ that minimizes $d(\flagHzero,W)$ among all graphons $W$ such that $d_W(x)\le \delta_j$
satisfies that $d_W(x)=\delta_j$ for almost every $x\in [0,1]$ by Theorem~\ref{thm-lo},
it follows that $W_0[C_j]$ is one of the graphons listed in the statement of Theorem~\ref{thm-lo}.
Since the graphon $W_0[C_j]$ has a single component, it follows that $\delta_j>1/2$ and  $d(\flagHzero,W_0[C_j])=0$.

Using (\ref{eq-B-4}), (\ref{eq-B-1}) and (\ref{eq-B-2}), we conclude that the following equalities hold.
\begin{eqnarray*}
d(\flagHthree,W_0) & = & \sum_{j\in J} \left(1-3\delta_j+3\delta_j^2\right)|C_j|^3 \label{eq-B-5} \\
d(\flagHone,W_0) & = & \sum_{j\in J} \left(3-9\delta_j+3\delta_j^2\right)|C_j|^3+3\delta_j|C_j|^2 \label{eq-B-6}
\end{eqnarray*}
We claim that $d(\flagHone-\alpha\flagHthree,W_0)$ is equal to the maximum value of the function
\begin{equation}
F(x_1,x_2,x_3,y_1,y_2,y_3)=\sum_{j=1}^3 x_j^3\left(3-\alpha-3(3-\alpha)y_j-3(\alpha-1)y_j^2\right)+3x_j^2y_j
\label{eq-B-7}
\end{equation}
subject to that
\begin{eqnarray}
x_1,x_2,x_3 & \ge & 0\,\mbox{,} \label{eq-B-8}\\
x_1+x_2+x_3 & = & 1\,\mbox{, and} \label{eq-B-9}\\
y_1,y_2,y_3 & \in & (1/2,1]\,\mbox{.} \label{eq-B-10}
\end{eqnarray}
Indeed, setting $x_j=|C_j|$, $y_j=\delta_j$ for $j\in J$, and 
setting $x_j=0$, $y_j=1$ for $j>|J|$ results in a feasible solution, and
the value of (\ref{eq-B-7}) is equal to $d(\flagHone-\alpha\flagHthree,W_0)$.
On the other hand, suppose that $x_1,x_2,x_3$ and $y_1,y_2,y_3$ is a feasible solution.
Let $C_1,C_2,C_3$ be arbitrary disjoint subsets of $[0,1]$ of measures $x_1,x_2,x_3$, respectively, and
let $W$ be the graphon such that for every $x_j>0$, $W[C_j]$ is a graphon such that
$d_{W[C_j]}(x)=y_j$ for almost every $x\in [0,1]$ and $d(\flagHzero,W[C_j])=0$;
for example, $W[C_j]$ can be chosen as the graphon equal to $1$ on $[0,1/2)^2\cup [1/2,1]^2$ and
equal to $2y_j-1$ elsewhere.
Since $d(\flagHone-\alpha\flagHthree,W)$ is equal to (\ref{eq-B-7}),
the choice of $W_0$ implies that the value of (\ref{eq-B-7})
does not exceed $d(\flagHone-\alpha\flagHthree,W_0)$.

The maximum value of (\ref{eq-B-7}) subject to (\ref{eq-B-8}), (\ref{eq-B-9}) and (\ref{eq-B-10})
is determined in Proposition~\ref{prop-max}, and
this value is indeed equal to $g_t(x)-\alpha\cdot x$,
in particular, it is equal to
\begin{eqnarray*}
h_{A,1}(\sigma)-\alpha\cdot h_{A,3}(\sigma) & = & \frac{24\sigma^3-18\sigma^2+6\sigma+\sqrt{5-12\sigma}-1}{2\sqrt{5-12\sigma}} \\
 & = & \frac{-\alpha^6+6\alpha^5-9\alpha^4-4\alpha^3+96\alpha-80}{144(\alpha-1)}\;\mbox{,}
\end{eqnarray*} 
where $\sigma=\frac{5-(\alpha-1)^2}{12}$.
\end{proof}

To complete the proof of Lemma~\ref{lm-curve-B},
we need to determine the maximum value of (\ref{eq-B-7}).

\begin{proposition}
\label{prop-max}
Let $\alpha\in(2,1+\sqrt{2})$.
The maximum value of (\ref{eq-B-7}) subject to (\ref{eq-B-8}), (\ref{eq-B-9}) and (\ref{eq-B-10})
is
\begin{equation}
\frac{-\alpha^6+6\alpha^5-9\alpha^4-4\alpha^3+96\alpha-80}{144(\alpha-1)}\;\mbox{.}
\end{equation}
This value is attained in particular
for $$x_1=x_2=\sigma\mbox{,}\, x_3=1-2\sigma\mbox{,}\, y_1=y_2=1\mbox{ and } y_3=\frac{1+\delta_A(\sigma)}{2}\,\mbox{,} $$
where $\sigma=\frac{5-(\alpha-1)^2}{12}=\frac{-\alpha^2+2\alpha+4}{12}\in (1/4,1/3)$ and
$\delta_A(\sigma)$ is defined in (\ref{eq-dA}).
\end{proposition}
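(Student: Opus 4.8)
The plan is to eliminate the six variables in groups, reducing to a one-variable optimization whose solution is the claimed extremal configuration.

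First I would eliminate the $y_j$. For fixed admissible $(x_1,x_2,x_3)$, the objective (\ref{eq-B-7}) is a sum of three terms, the $j$-th of which depends only on $y_j$ and is a concave quadratic in it (the coefficient of $y_j^2$ is $-3(\alpha-1)x_j^3\le 0$). Hence the optimal $y_j$ is the interior critical point $y_j^*=\frac{1-(3-\alpha)x_j}{2(\alpha-1)x_j}$ when this lies in $(1/2,1]$, i.e.\ when $x_j\in[\frac1{\alpha+1},\frac12]$, and is $y_j=1$ when $x_j\le\frac1{\alpha+1}$ (the ``clique'' case, the term then equalling $3x_j^2-(\alpha+3)x_j^3$); for $x_j>1/2$ the supremum equals $\frac32 x_j^2-\frac{\alpha+3}4 x_j^3$ but is not attained. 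Writing $\phi(x)$ for the resulting maximal per-component value, extended continuously to $[0,1]$, the problem becomes to maximize $\phi(x_1)+\phi(x_2)+\phi(x_3)$ over the simplex; here $\phi$ is given piecewise by the two formulas above and is of class $C^1$ on $(0,1/2)$. A short computation shows that any configuration with some $x_j\ge1/2$ has value strictly below the claimed maximum (at most one part can be that large, and the remaining two then have total at most $1/2$), so we may assume all $x_j<1/2$; in particular an optimum of the $\phi$-problem is also an optimum of the original problem.

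Next I would show that at an optimum \emph{at most one} component is a non-clique. A direct computation gives $\phi''(w)=(3-\alpha)\,\frac{3(\alpha+5)w-6}{2(\alpha-1)}$ on the non-clique regime $(\frac1{\alpha+1},\frac12)$, which is strictly positive there since $w>\frac1{\alpha+1}\ge\frac2{\alpha+5}$ (using $\alpha<3$); thus $\phi$ is strictly convex on $[\frac1{\alpha+1},\frac12]$. So if two components $a\le b$ were non-cliques, replacing them by $(a-\varepsilon,b+\varepsilon)$ keeps both in the convex regime and does not decrease the objective, and pushing until $a$ reaches $\frac1{\alpha+1}$ (so it becomes a clique) or $b$ reaches $\frac12$ (a configuration already excluded) shows that such a configuration cannot be optimal. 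On the other hand three cliques would force $x_1+x_2+x_3\le\frac3{\alpha+1}<1$ since $\alpha>2$, which is impossible. Hence at an optimum exactly one component, say the third, is a non-clique, so $x_1,x_2\le\frac1{\alpha+1}$ and $x_3=1-x_1-x_2\in[\frac1{\alpha+1},\frac12)$; in particular $s:=x_1+x_2\le\frac2{\alpha+1}$. With $s$ fixed, $g(x):=3x^2-(\alpha+3)x^3$ has derivative $3x(2-(\alpha+3)x)$, a downward parabola with axis $\frac1{\alpha+3}<\frac{s}2$ (as $s>\frac12>\frac2{\alpha+3}$), so $x_1\mapsto g(x_1)+g(s-x_1)$ — symmetric about $s/2$, with $s/2$ its only critical point, and decreasing away from it — is maximized at $x_1=x_2=:\sigma=s/2$.

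It then remains to maximize $G(\sigma)=6\sigma^2-2(\alpha+3)\sigma^3+\phi(1-2\sigma)$ over $\sigma\in[\frac14,\frac1{\alpha+1}]$. The stationarity equation $G'(\sigma)=0$, namely $3\sigma(2-(\alpha+3)\sigma)=\phi'(1-2\sigma)$ (equalizing the marginal values of a clique part and of the non-clique part), is quadratic in $\sigma$; I would verify by substitution that $\sigma_0=\frac{5-(\alpha-1)^2}{12}$ is a root, that $\sigma_0\in(1/4,1/3)$ and $1-2\sigma_0=\frac{1+(\alpha-1)^2}6\in(\frac1{\alpha+1},\frac12)$ for $\alpha\in(2,1+\sqrt2)$ (for instance $\sigma_0\le\frac1{\alpha+1}$ reduces to $(\alpha-2)\big((\alpha-1)^2+3(\alpha-1)-2\big)\ge0$), and that $G$ attains its maximum at $\sigma_0$ (since $G'$ opens downward with $\sigma_0$ as its larger root, $G$ is decreasing on $(\sigma_0,\infty)$, so one need only check the left endpoint: $G(\frac14)=\frac{9-\alpha}{16}\le G(\sigma_0)$, an explicit polynomial inequality). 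For $\sigma=\sigma_0$ one has $5-12\sigma_0=(\alpha-1)^2$, the optimal $y_3^*$ equals $\frac{1+\delta_A(\sigma_0)}2$ with $\delta_A$ as in (\ref{eq-dA}), and the corresponding graphon is precisely the one of Figure~\ref{fig-graph-hA}, whose triangle and co-cherry densities are $h_{A,3}(\sigma_0)$ and $h_{A,1}(\sigma_0)$ by definition. Hence the maximum equals $h_{A,1}(\sigma_0)-\alpha\,h_{A,3}(\sigma_0)$, and substituting the formulas for $h_{A,1}$ and $h_{A,3}$ together with $\alpha-1=\sqrt{5-12\sigma_0}$ yields the stated value. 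The main obstacle is the reduction to a single non-clique component: the objective is genuinely non-concave on the simplex — indeed $\phi$ is convex on the non-clique regime, so the maximum is attained in the interior of the simplex — and one must exploit the precise piecewise behaviour of $\phi$, and in particular this convexity, while treating the boundary $x_j=\frac12$ with care.
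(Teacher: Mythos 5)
Your reduction of the $y_j$'s is the same as the paper's (the appendix derives exactly your formula (\ref{eq-B-13})-style case split from the partial derivative (\ref{eq-B-12})), but from that point on your route is genuinely different. The paper proceeds by brute force: it enumerates every combination of ``how many $x_j$ vanish'' and ``which regime each remaining $x_j$ lies in'' ($x_j\le\frac1{\alpha+1}$, $x_j\in(\frac1{\alpha+1},\frac12)$, $x_j\ge\frac12$), solves the Lagrange conditions in each of roughly a dozen cases, and checks each candidate value against (\ref{eq-B-M}). You instead prove two structural facts --- strict convexity of the per-component value $\phi$ on the non-clique regime (your computation of $\phi''$ is correct, and positivity there does follow from $\frac1{\alpha+1}\ge\frac2{\alpha+5}$, i.e.\ $\alpha\le 3$), hence at most one non-clique part, and impossibility of three cliques since $\frac3{\alpha+1}<1$ --- and then symmetrize the two clique parts (your argument that $x_1=x_2=s/2$ is the unique maximizer is sound, since $s>\frac12>\frac2{\alpha+3}$ rules out the asymmetric critical points of $g'$, and $s/2\le\frac1{\alpha+1}$ keeps both parts in the clique regime). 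This collapses the paper's case analysis to a single one-variable problem whose stationary points are $\sigma=\frac14$ and $\sigma_0$, and it explains \emph{why} the extremal configuration has the shape of Figure~\ref{fig-graph-hA} rather than merely verifying it. What the paper's approach buys in exchange is that every boundary configuration is explicitly evaluated, whereas you compress the entire half-space $\{\exists j:\ x_j\ge\frac12\}$ into ``a short computation shows the value is below the maximum.'' That step is true but not quite as short as advertised: with $x_3\ge\frac12$ the remaining mass $1-x_3\le\frac12$ is \emph{not} optimally split evenly (near $0$ the clique term $3x^2-(\alpha+3)x^3$ is convex, so the best split of a small residue is fully asymmetric), so one still needs a small case analysis of its own --- essentially the cases $x_1=0$ and $x_3\in[\frac12,1]$ that occupy several bullets of the paper's appendix. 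That, together with the routine polynomial verifications ($\sigma_0$ is the larger root of $G'$, $G(\frac14)=\frac{9-\alpha}{16}\le G(\sigma_0)$, and the final evaluation $h_{A,1}(\sigma_0)-\alpha h_{A,3}(\sigma_0)$ equals (\ref{eq-B-M})), is all that separates your sketch from a complete proof; none of it threatens the argument.
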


The proof of Proposition~\ref{prop-max} can be obtained using any symbolic mathematical computation program. Nevertheless, we include the proof in the Appendix.

\subsection{The convex regime of $g_t$}

In this subsection, we deal with graphs having the triangle density between $1/9$ and $1/4$.
In order to bound $d_1$ as a function of $d_3$ for $d_3\in [1/9,1/4)$, we study a certain optimization problem involving $d_0$, $d_1$, $d_2$ and $d_3$.
We analyze this problem using the result of Pikhurko and Razborov~\cite{bib-pikhurko16+},
which characterizes the extremal configurations for the edge vs.~triangle density problem.
In particular, we show that the optimal values for the problem
correspond to $3$-vertex graph densities in the complements of the extremal $3$-partite complete graphs.

\begin{lemma}
\label{lm-curve-C}
Let $(d_1,d_3)$ be a point contained in $S_{13}$.
If $d_3\in [1/9,1/4)$, then $d_1\le g_t(d_3)$.
\end{lemma}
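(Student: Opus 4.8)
The plan is to mimic the structure of the previous subsection but replace the appeal to Lo's theorem with the Pikhurko--Razborov structure theorem. Concretely, for a fixed $x\in[1/9,1/4)$ I would work with the slope $\alpha = g_t'(x)$ of the upper boundary in this regime, i.e. the derivative of $h_{B,1}(h_{B,3}^{-1}(\cdot))$, and show that any graphon $W$ maximizing $d(\flagHone-\alpha\flagHthree, W)$ satisfies $d(\flagHone-\alpha\flagHthree, W)\le g_t(x)-\alpha x$; by convexity of $g_t$ on $[1/9,1/4]$ this supporting-line inequality, together with $d_1\le g_t(d_3)$ at the endpoints, gives $d_1\le g_t(d_3)$ for all admissible $(d_1,d_3)\in S_{13}$ with $d_3$ in this interval. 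First I would compute $\alpha$ and the relevant $\sigma = h_{B,3}^{-1}(x)\in[1/3,1/2)$ explicitly from the closed forms of $h_{B,1}$ and $h_{B,3}$, and record the range of $\alpha$ (it should sit in the interval $(1+\sqrt2,\,\text{something})$, disjoint from the regime of Lemma~\ref{lm-curve-B}, so the earlier case analysis does not apply verbatim).

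Next I would pass to the \emph{complement} graphon $\overline W$, where $\overline W(x,y)=1-W(x,y)$: maximizing $d(\flagHone-\alpha\flagHthree,W)$ is the same optimization as a linear combination of $3$-vertex densities in $\overline W$, and since $\overline{H_1}=H_2$, $\overline{H_3}=H_0$, the objective becomes a linear combination of the form $c_0 d(\flagHzero,\overline W) + c_1 d(\flage,\overline W)+\text{const}$ after using Propositions~\ref{prop-unit} and \ref{prop-edge} to eliminate two of the four densities. Thus the problem reduces to: among graphons with a prescribed (or free) edge density, minimize the triangle density — exactly the edge-vs-triangle problem solved by Razborov (Theorem~\ref{thm-e-tr}), whose extremal configurations are described by Pikhurko and Razborov. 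The key step is then to argue that for the specific coefficient $\alpha$ at hand the optimum of this one-parameter family of linear functionals over the Razborov curve is attained at the point of the curve corresponding to a complete \emph{three}-partite graph with two equal parts of size $\sigma$ and one part of size $1-2\sigma$ (so $k=3$, and the ``$z$'' parameter matches $\sigma$); complementing back, that graphon is precisely the three-clique graphon defining $h_{B,1},h_{B,3}$, and the value of the functional there is $h_{B,1}(\sigma)-\alpha h_{B,3}(\sigma)=g_t(x)-\alpha x$ by the definition of $\alpha$ as the slope.

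To make the ``optimum on the Razborov curve'' step rigorous I would use that the edge-vs-triangle extremal function $g_R$ is convex on each interval where a fixed $k$ is active, and that $\alpha=g_t'(x)$ is tuned so that the supporting line of the (complemented) feasible region at the three-partite point has slope matching the functional; hence that point maximizes the linear functional over the whole feasible region $\{(d_e,d_3): d_3\ge g_R(d_e)\}$ intersected with the complement constraints. I would also need to rule out that the maximum occurs at a degenerate boundary point (e.g. $d_3=1/4$ corresponding to two cliques, or $d_3=1/9$ corresponding to three equal cliques) by checking the one-sided slopes of $g_t$ at $1/9$ and $1/4$ against $\alpha$; this is the analogue of the four-component check in Lemma~\ref{lm-curve-B}. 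The main obstacle I anticipate is precisely this last matching/monotonicity argument: verifying, with only the closed forms of $h_{B,1},h_{B,3},g_R$ in hand, that the slope $g_t'(x)$ never exceeds or falls below the slopes that would push the optimum to a different branch of the Razborov curve (for instance to $k\ge 4$), and that the complement constraints $d_0,d_1,d_2\ge0$ are not active at the optimum — this is where a careful but elementary calculus computation is unavoidable, and where I would lean on the explicit parametrization $\sigma = h_{B,3}^{-1}(x)$ rather than on $x$ directly.
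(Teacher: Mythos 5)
Your plan is built on a step that is false, and the failure is structural rather than computational. On $[1/9,1/4]$ the function $g_t$ is \emph{convex} (the paper computes $g_t'(x)=1-\frac{2-4\sigma}{1-\sigma}$ with $\sigma=h_{B,3}^{-1}(x)$ increasing in $x$, so $g_t'$ increases from $0$ at $x=1/9$ to $1$ at $x=1/4$). Consequently the boundary points $(g_t(d_3),d_3)$ with $d_3\in(1/9,1/4)$ lie strictly below the chord joining $(2/3,1/9)$ to $(3/4,1/4)$, i.e., in the \emph{interior} of the convex hull of $S_{13}$; no linear functional $d_1-\alpha d_3$ attains its maximum over $S_{13}$ at such a point, so no supporting-line inequality of the form $d_1-\alpha d_3\le g_t(x)-\alpha x$ can hold on all of $S_{13}$. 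Concretely, with $\alpha=g_t'(x)$ for $x$ interior to the interval, the two-equal-cliques point $(3/4,1/4)\in S_{13}$ already violates your inequality, since strict convexity gives $3/4=g_t(1/4)>g_t(x)+\alpha\left(1/4-x\right)$. The same obstruction undermines your later step claiming that the point of the (complemented) Razborov curve whose slope matches $\alpha$ maximizes the linear functional: along a convex arc such a tangency point is a local \emph{minimum} of the functional restricted to the curve, not a maximum. This is exactly why the tangent-line method of Lemma~\ref{lm-curve-B} is confined to the concave regime.

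Your instinct to complement and invoke Razborov and Pikhurko--Razborov is the right ingredient, but it must be used with $d_3$ held fixed and the constraint kept nonlinear. The paper's proof fixes $d_3$ and maximizes $d_1$ subject to $d_0+d_1+d_2+d_3=1$, $d_0,d_1,d_2\ge0$, and $h\bigl(\frac{3d_0+2d_1+d_2}{3}\bigr)\le d_0$, where $h$ is a convenient minorant of $g_R$ and the constraint is the complemented form of Theorem~\ref{thm-e-tr}. The optimum $m(d_3)$ is then pinned down by an elementary case analysis on $d_0$ (showing the constraint is tight, that $d_2=0$, and that $d_0\in(0,2/9]$), and only at that stage does the Pikhurko--Razborov structure theorem identify the extremal configuration as the complement of a complete $3$-partite graph with two equal parts, giving $d_1=h_{B,1}(h_{B,3}^{-1}(d_3))=g_t(d_3)$. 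To repair your argument you would need to abandon the linear-functional framing entirely and carry out an optimization of this nonlinear, fixed-$d_3$ type.
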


\begin{proof}
Let $h(x):[0,1]\to [0,1]$ be the function equal to $g_R(x)$ for $x\in [0,2/3]$ and to $x(2x-1)$ for $x\in [2/3,1]$.
Observe that $h(x)\le g_R(x)$ for every $x\in [0,1]$.
This inequality can be established by a direct computation.
A less technical argument is the following:
Goodman's bound asserts the asymptotic lower bound $d_e(2d_e-1)$ on the triangle density in a graph with edge density $d_e$,
which must be smaller than or equal to the tight asymptotic lower bound $g_R(d_e)$.

Fix $d_3\in [1/9,1/4)$ and consider the problem to maximize $d_1$ subject to
\begin{eqnarray}
d_0+d_1+d_2+d_3 & = & 1 \label{eq-C-1} \\
h\left(\frac{3d_0+2d_1+d_2}{3}\right) & \le & d_0 \label{eq-C-2}
\end{eqnarray}
where $d_0,d_1,d_2\ge 0$. Let $m(d_3)$ be this maximum.

Let $(d_0,d_1,d_2,d_3)\in S$, and
let $(G_n)_{n\in\NN}$ be a convergent sequence of graphs with the limit density of $H_k$ equal to $d_k$, $k=0,1,2,3$.
The values $d_0$, $d_1$, $d_2$ and $d_3$ satisfy (\ref{eq-C-1}) by Proposition~\ref{prop-unit}.
By applying Proposition~\ref{prop-edge} and Theorem~\ref{thm-e-tr} to the limit densities of the complements of the graphs $G_n$,
we obtain that the values $d_0$, $d_1$, $d_2$ and $d_3$ satisfy that
$$g_R\left(\frac{3d_0+2d_1+d_2}{3}\right)\le d_0\;\mbox{.}$$
Since $h(x)\le g_R(x)$ for every $x\in [0,1]$, it follows that
$d_0$, $d_1$, $d_2$ and $d_3$ also satisfy the inequality (\ref{eq-C-2}).
Since the values $d_0$, $d_1$ and $d_2$ are non-negative and they satisfy both (\ref{eq-C-1}) and (\ref{eq-C-2}),
it follows that $d_1\le m(d_3)$.
In the rest of the proof, we will show $m(d_3)=g_t(d_3)$,
which will imply the statement of the lemma.

Recall that $d_3\in [1/9,1/4)$ is fixed.
We first show that $m(d_3)\ge g_t(d_3)$.
Since the functions $h_{B,1}$ and $h_{B,3}$ were defined 
as the asymptotic co-cherry and triangle densities of graphs with a particular structure,
there exists a point $(d_0,d_1,d_2,d_3)$ in $S$ with $d_1=g_t(d_3)=h_{B,1}(h_{B,3}^{-1}(d_3))$.
The values $d_0$, $d_1$ and $d_2$ satisfy both (\ref{eq-C-1}) and (\ref{eq-C-2}).
Hence, it holds that $m(d_3)\ge g_t(d_3)$.

We next establish the opposite inequality, i.e., $m(d_3)\le g_t(d_3)$.
Let $d_0$, $d_1$ and $d_2$ be non-negative reals that satisfy $d_1=m(d_3)$ and the equations (\ref{eq-C-1}) and (\ref{eq-C-2}),
i.e., $d_0$, $d_1$ and $d_2$ form an optimal solution of the considered maximization problem.
Suppose first that (\ref{eq-C-2}) is not satisfied with equality.
If $d_0$ or $d_2$ were positive,
we could decrease this variable by some $\varepsilon>0$ and increase $d_1$ by $\varepsilon$ in such a way that
(\ref{eq-C-2}) still holds.
Since $d_1=m(d_3)$, this cannot be the case and we get that both $d_0$ and $d_2$ are zero.
The equality (\ref{eq-C-1}) now implies that $d_1=1-d_3>3/4$.
Hence, we get that $2d_1/3>1/2$ and that the left hand side of (\ref{eq-C-2}) must be positive,
which is impossible since $d_0=0$.
We conclude that the values $d_0$, $d_1$ and $d_2$ satisfy (\ref{eq-C-2}) with equality.

We next distinguish three cases depending on the value of $d_0$.
If $d_0=0$, then we get that $d_1+d_2=1-d_3$ from (\ref{eq-C-1}) and that $2d_1/3+d_2/3\le 1/2$ from (\ref{eq-C-2}).
The maximum $d_1$ satisfying $d_1+d_2=1-d_3$ and $2d_1/3+d_2/3\le 1/2$ is equal to $d_3+1/2$.
Since $d_3+1/2<g_t(d_3)$ for $d_3\in [1/9,1/4)$, the triple $d_0$, $d_1$ and $d_2$ cannot form an optimal solution.
The inequality $d_3+1/2<g_t(d_3)$ can be derived as follows.
The derivatives of the functions $h_{B,1}$ and $h_{B,3}$ are the following:
\begin{eqnarray*}
h'_{B,1}(\sigma) & = &  6-36\sigma+54\sigma^2\\
h'_{B,3}(\sigma) & = & -6+24\sigma-18\sigma^2
\end{eqnarray*}
This implies that the derivative of $g_t(x)$ at a point $x\in (1/9,1/4)$ is equal to
$$g'_t(x)=\frac{6-36\sigma+54\sigma^2}{-6+24\sigma-18\sigma^2}=1 - \frac{2-4\sigma}{1-\sigma}\;,$$
where $\sigma=h_{B,3}^{-1}(x)$.
Since $\sigma\in (1/3,1/2)$, it follows that $g'_t(x)<1$ for every $x\in (1/9,1/4)$.
Finally, since the function $g_t(x)$ is continuous and for $x=1/4$ we have $g_t(x)=x + 1/2$,
we get that $g_t(x) > x + 1/2$ for every $x\in [1/9,1/4)$.

The next case that we analyze is that $d_0\in (0,2/9]$.
Since the inequality (\ref{eq-C-2}) holds with equality,
we get that
$$d_0+\frac{2d_1}{3}+\frac{d_2}{3}=h^{-1}(d_0)\;\mbox{,}$$
where $h^{-1}$ is the inverse of the function $h$ restricted to the interval $[1/2,1]$.
We derive using (\ref{eq-C-1}) that
$$d_1=d_3-1+3h^{-1}(d_0)-2d_0\;\mbox{.}$$
We now investigate the derivative of the function $g_R$ on the interval $[1/2,2/3]$,
which has values as described in (\ref{eq-g3}) on this interval:
$$g'_R(x)=\left(\frac{(1-\sqrt{4-6x})(2+\sqrt{4-6x})^2}{18}\right)'=1+\sqrt{1-\frac{3x}{2}}\;\mbox{.}$$
Since $h(x)=g_R(x)$ for $x\in [1/2,2/3]$,
we get that the derivative of $h(x)$ on $(1/2,2/3)$ is strictly between $1$ and $3/2$.
Hence, the derivative of $h^{-1}(x)$ on $(0,2/9)$ is strictly between $2/3$ and $1$,
which implies that $3h^{-1}(d_0)-2d_0$ is an increasing function of $d_0$ on $[0,2/9]$.
Since we are considering a triple $d_0$, $d_1$ and $d_2$ maximizing $d_1$,
it must hold that $d_0=2/9$ or $d_2=0$ (if $d_0<2/9$ and $d_2>0$, we could decrease $d_2$ while increasing $d_0$ and so $d_1$).

Suppose that $d_2>0$. It follows that $d_0=2/9$, and
we derive from (\ref{eq-C-1}) and from $d_3\ge 1/9$ that $d_1<1-d_0-d_3\le 2/3$.
Since $m(d_3)\ge 2/3$, such a triple $d_0$, $d_1$ and $d_2$ cannot form an optimal solution.
Hence, it must hold that $d_2=0$.
Since (\ref{eq-C-2}) holds with equality and $d_2=0$,
the result of Pikhurko and Razborov~\cite{bib-pikhurko16+}
on the asymptotic structure of graphs with a given edge density that
minimize the triangle density (see Section~\ref{sec-tr})
implies that $d_0$, $d_1$, $d_2$ and $d_3$ are equal to densities in the complement of a graph
formed by three cliques, where the two largest cliques have the same size.
Since the common size of the two largest cliques uniquely determines $d_3$,
it follows that $d_1=h_{B,1}(h_{B,3}^{-1}(d_3))=g_t(d_3)$.

The final case that remains to be analyzed is that $d_0>2/9$.
Since (\ref{eq-C-2}) holds with equality, we again get that
$$d_0+\frac{2d_1}{3}+\frac{d_2}{3}=h^{-1}(d_0)\;\mbox{,}$$
where $h^{-1}$ is the inverse of the function $h$ restricted to the interval $[1/2,1]$.
We express $d_1$ and substitute to (\ref{eq-C-1}) to get that
\begin{equation}
d_2=2-2d_3+d_0-3h^{-1}(d_0)\;\mbox{.}\label{eq-C-d2}
\end{equation}
Since the derivative of $h(x)$ on the interval $(2/3,1)$ is equal to $4x-1$,
i.e., the derivative $h'(x)$ is less than $3$ for $x\in (2/3,1)$,
we obtain that $x-3h^{-1}(x)$ is a strictly decreasing function of $x \in (2/9,1)$.
In particular, it holds that
$$d_0-3h^{-1}(d_0)<\frac{2}{9}-3h^{-1}\left(\frac{2}{9}\right)=-\frac{16}{9}\;\mbox{.}$$
Since $d_3\ge 1/9$, we obtain from (\ref{eq-C-d2}) that
$$d_2=2-2d_3+d_0-3h^{-1}(d_0)<2-\frac{2}{9}-\frac{16}{9}=0\mbox{,}$$
which is impossible.
This finishes the analysis of the optimal value $m(d_3)$ and
we can now conclude that $m(d_3)=g_t(d_3)$ as desired.
\end{proof}

\subsection{The projection $S_{13}$}

We are now ready to determine the projection $S_{13}$,
which is visualized in Figure~\ref{fig-tr-cc}.

\begin{theorem}
\label{thm-tr-cc}
The projection $S_{13}$ consists precisely of the points $(d_1,d_3)$ such that
$0\le d_3\le 1$ and $0\le d_1\le g_t(d_3)$.
\end{theorem}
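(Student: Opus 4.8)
The plan is to prove the two inclusions $S_{13}\subseteq R$ and $R\subseteq S_{13}$ separately, where $R=\{(d_1,d_3):0\le d_3\le 1,\ 0\le d_1\le g_t(d_3)\}$ is the claimed region.

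For $S_{13}\subseteq R$ the constraints $d_1\ge 0$ and $d_3\in[0,1]$ are immediate, so the task is $d_1\le g_t(d_3)$, which I would handle by splitting into the four intervals on which $g_t$ has a single closed form. On $[0,1/16]$ we have $g_t(d_3)=3d_3+\tfrac38$, and the bound $d_1\le 3d_3+\tfrac38$ is exactly Lemma~\ref{lm-curve-A}. On $(1/16,1/9)$ the bound $d_1\le g_t(d_3)$ is precisely the ``in particular'' clause of Lemma~\ref{lm-curve-B}, and on $[1/9,1/4)$ it is Lemma~\ref{lm-curve-C}. Finally, on $[1/4,1]$ we have $g_t(d_3)=1-d_3$, and if $(d_1,d_3)\in S_{13}$ then some $(d_0,d_1,d_2,d_3)\in S$, so $d_1=1-d_0-d_2-d_3\le 1-d_3$ by Proposition~\ref{prop-unit} together with $d_0,d_2\ge 0$. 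These four disjoint ranges cover $[0,1]$ (and the formulas agree at the breakpoints $1/16$, $1/9$, $1/4$ by the recorded values of $h_{A,i},h_{B,i}$), so $d_1\le g_t(d_3)$ holds throughout.

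For $R\subseteq S_{13}$ I would first realize the boundary of $R$. The upper arc $d_1=g_t(d_3)$ is realized piecewise by the graphons already introduced: on $[0,1/16]$ by graphons all of whose degrees equal $1/4$ (any such graphon lies on the line $d_1=3d_3+\tfrac38$, since $\unlabel{(3\flageroot-\flagfroot)^2}{}$ then vanishes and~(\ref{eq-A-1}) becomes an equality, and its triangle density sweeps $[0,1/16]$, e.g.\ by interpolating between a balanced bipartite graphon and four equal cliques); on $[1/16,1/9]$ by the graphs of Figure~\ref{fig-graph-hA}; on $[1/9,1/4]$ by the three-clique graphs defining $h_{B,1},h_{B,3}$; and on $[1/4,1]$ by two cliques of sizes $a$ and $1-a$ (which give $d_1=3a(1-a)=1-d_3$). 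The lower edge $d_1=0$, $d_3\in[0,1]$, is realized by ``threshold'' graphons, a clique completely joined to an independent set, which have no triple inducing $H_1$; and the left edge $d_3=0$, $d_1\in[0,3/8]$, by balanced bipartite graphons with a variable cross-density. To cover the interior one then tunes explicit one-parameter families of graphons built from these — for instance a clique joined to an independent set with variable cross-density, or a clique disjointly adjoined to a threshold (or triangle-free) graphon of variable edge density, or an analogous family growing out of the relevant boundary construction — in each case choosing the parameter so that $d_3$ stays at the prescribed value; then $d_1$ varies continuously, and intermediate-value arguments applied within the families and across their overlaps show that for each fixed $d_3$ the realizable values of $d_1$ fill all of $[0,g_t(d_3)]$. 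This must be organised separately in the four ranges of $d_3$, since the upper boundary is of a different nature in each.

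The genuinely hard work of this section is \emph{not} in Theorem~\ref{thm-tr-cc} itself: it lies in the three lemmas above, and above all in Lemma~\ref{lm-curve-B}, where the tightness of the concave arc $h_{A,1}\circ h_{A,3}^{-1}$ is forced out of the flag-algebra sum-of-squares identities of Lemmas~\ref{lm-regular} and~\ref{lm-min-degree}, Theorem~\ref{thm-lo}, and the optimization of Proposition~\ref{prop-max}. Granting those, the only delicate point in proving the theorem is verifying that the explicit families of graphons above genuinely exhaust $R$ and leave no gap between consecutive constructions; everything else is bookkeeping and assembly.
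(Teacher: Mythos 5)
Your first inclusion, $S_{13}\subseteq R$, is exactly the paper's argument: Lemma~\ref{lm-curve-A} on $[0,1/16]$, Lemma~\ref{lm-curve-B} on $(1/16,1/9)$, Lemma~\ref{lm-curve-C} on $[1/9,1/4)$, and Proposition~\ref{prop-unit} with $d_0,d_2\ge 0$ on $[1/4,1]$. Your boundary realizations for the reverse inclusion also match the paper's constructions (the $1/4$-regular graphons, the graphs of Figure~\ref{fig-graph-hA}, the three-clique and two-clique graphs, dominating-clique graphs for $d_1=0$, balanced bipartite graphons for $d_3=0$). Where you diverge is in filling the interior: you propose a slice-by-slice intermediate-value argument, tuning one-parameter families so that $d_3$ is held fixed while $d_1$ sweeps $[0,g_t(d_3)]$. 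The paper instead builds two explicit \emph{two}-parameter families, $G_1(n,a,x)$ (the boundary graphs $G_0(n,x)$ plus a fraction $a$ of dominating vertices) and $G_2(n,a,p)$ (a clique of measure $a$ joined with density $p$ to a part of internal density $1-p$), checks that the induced continuous maps $h_1,h_2$ send the boundaries of parameter discs $t_1,t_2$ onto the boundaries of two regions $T_1,T_2$ with $T_1\cup T_2=T$, and concludes $T_i\subseteq h_i(t_i)\subseteq S_{13}$ by the standard topological covering fact for maps of a $2$-disc. That route replaces all slice-wise bookkeeping by two boundary computations plus two monotonicity checks.

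As written, your interior-filling step is a genuine gap rather than a proof. The families are introduced only with ``for instance,'' and the two things that actually need verification are missing: (i) that within each family the constraint $d_3=\mathrm{const}$ can be solved continuously for one parameter (this needs monotonicity of $d_3$ in that parameter, which is exactly the kind of derivative computation the paper performs for $h_1(a,1/4)$ and $h_2(a,0)$), and (ii) that on each fixed slice the $d_1$-ranges of consecutive families overlap, so that the intermediate-value theorem can be chained from $d_1=0$ up to $d_1=g_t(d_3)$. Neither is obvious; for example, for $d_3\ge 1/4$ one must reach every $d_1$ up to $1-d_3$, and a plain ``clique joined to an independent set with variable cross-density'' does not suffice --- the paper needs the complement part of $G_2(n,a,p)$ to be quasirandom of density $1-p$, with the cross-density $p$ chosen complementarily, to sweep out all of $T_2$. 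Your overall architecture could certainly be completed, but the content you defer to ``bookkeeping and assembly'' is precisely the nontrivial part of the paper's proof of this theorem.
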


\begin{proof}
Let $T$ be the set of the points $(d_1,d_3)$ that satisfy $0\le d_3\le 1$ and \hbox{$0\le d_1\le g_t(d_3)$}.
We first show that $S_{13}\subseteq T$.
Let $(d_1,d_3)\in S_{13}$.
If $d_3\in [0,1/16]$, then $d_1\le g_t(d_3)=3d_3+3/8$ by Lemma~\ref{lm-curve-A}.
If $d_3\in (1/16,1/9)$, then $d_1\le g_t(d_3)$ by Lemma~\ref{lm-curve-B}.
Finally, if $d_3\in [1/9,1/4)$, then $d_1\le g_t(d_3)$ by Lemma~\ref{lm-curve-C}, and
if $d_3\in [1/4,1]$, then $d_1\le 1-d_3=g_t(d_3)$ by Proposition~\ref{prop-unit}.
We conclude that $S_{13}$ is a subset of $T$.

We will next show that two particular sets $T_1$ and $T_2$ are subsets of $S_{13}$.
The set $T_1\subseteq\RR^2$ is the union of the segments with end-points $(0,x)$ and $(g_t(x),x)$ for $x \in [0,1/4]$; 
the set $T_2\subseteq\RR^2$ is the convex hull of the points $(0,1/4)$, $(3/4,1/4)$ and $(0,1)$.
Note that $T_1\cup T_2=T$.

Let $G_0(n,x)$ for every $x\in [-1/4,1/4]$ be the following $n$-vertex graph.
If $x\in [-1/4,0)$, then the vertices of $G_0(n,x)$ are split into five parts $A$, $B$, $C$, $D$ and $E$ such that
$|A|=|B|=|C|=|D|=\lfloor (1/4+x)n\rfloor$ and the remaining vertices belong to $E$.
The graph $G_0(n,x)$ contains all edges between the sets $A$ and $B$,
all edges between the sets $C$ and $D$, and no other edges.

If $x\in [0,1/16)$, then the graph $G_0(n,x)$ is the following random $n$-vertex graph.
Its vertices are split into four parts $A$, $B$, $C$ and $D$ such that
the sizes of any two of the parts differ by at most one.
Two vertices inside the same part are joined by an edge with probability $16x$.
A pair of vertices from the parts $A$ and $B$, respectively, is joined by an edge with probability $1-16x$;
likewise, a pair of vertices from the parts $C$ and $D$, respectively, is joined by an edge with probability $1-16x$.

If $x\in [1/16,1/9)$, let $\sigma=h_{A,3}^{-1}(x)$.
The graph $G_0(n,x)$ has the vertices split into four parts $A$, $B$, $C$ and $D$ such that
$|A|=|B|=\lfloor (1-\sigma)n/2\rfloor$ and the remaining vertices are split among $C$ and $D$ in such a way that
the sizes of $C$ and $D$ differ by at most one.
Each vertex of $A$ is adjacent to exactly $\lfloor \delta_A(\sigma)|B|\rfloor$ vertices of $B$, and
each vertex of $B$ is adjacent to exactly $\lfloor \delta_A(\sigma)|A|\rfloor$ vertices of $A$,
where $\delta_A(\sigma)$ is as in (\ref{eq-dA});
since $\delta_A(\sigma)\in [0,1]$ and $|A|=|B|$, this is possible.
In addition, all pairs of vertices inside each of the four parts are joined by edges.

Finally, if $x\in [1/9,1/4]$, then let $\sigma=h_{B,3}^{-1}(x)$.
The graph $G_0(n,x)$ has the vertices split into three parts $A$, $B$ and $C$ such that
$|A|=|B|=\lfloor \sigma n\rfloor$, and
two vertices are joined by an edge if they belong to the same part.

For every $x\in [-1/4,1/4]$, the sequence $(G_0(n,x))_{n\in\NN}$ is convergent with probability one and
the limit triangle density is $\max\{0,x\}$, and
the limit co-cherry density is $24\left(\frac{1}{4}+x\right)^2\left(\frac{1}{4}-x\right)$ if $x\in [-1/4,0)$, and
it is equal to $g_t(x)$ if $x\in [0,1/4]$.

We next define a graph $G_1(n,a,x)$ for $a\in [0,1]$ and $x\in [-1/4,1/4]$ to be the graph
obtained from the graph $G_0(\lceil (1-a)n\rceil,x)$ by adding $\lfloor an\rfloor$ vertices that
are adjacent to all vertices of the graph $G_1(n,a,x)$.
The sequence of graphs $(G_1(n,a,x))_{n\in\NN}$ is convergent with probability one
for every $a\in [0,1]$ and \hbox{$x\in [-1/4,1/4]$};
let $h_1(a,x)\in\RR^2$ be the pair formed by the limit co-cherry density and the limit triangle density.

Let $t_1\subseteq\RR^2$ be the convex hull of the points $(0,-1/4)$, $(0,1/4)$, $(1,1/4)$ and $(1,-1/4)$.
Observe that the following holds for all $a\in [0,1]$ and $x\in [0,1/4]$:
\begin{eqnarray*}
h_1(0,-x) & = & \left(24\left(\frac{1}{4}+x\right)^2\left(\frac{1}{4}-x\right),\; 0\right) \\
h_1(0,x) & = & (g_t(x),x) \\
h_1(a,1/4) & = & \left(\frac{3}{4}(1-a)^3,\; 1-\frac{3}{4}(1+a)(1-a)^2\right) \\
h_1(1,x) & = & (0,\; 1) \\
h_1(1,-x) & = & (0,\; 1) \\
h_1(a,-1/4) & = & (0,\; a^3+3a^2(1-a)) 
\end{eqnarray*}
Since the derivative of $1-\frac{3}{4}(1+a)(1-a)^2$ is equal to
$$\frac{3}{4}(1-a)(1+3a) \;\mbox{,}$$
which is positive for $a\in (0,1)$,
we conclude that the closed region of $\RR^2$ bounded by the image of the boundary of $t_1$ contains the set $T_1$.
Since $h_1$ is a continuous map from $t_1$ to $\RR^2$ and $t_1$ is a topological $2$-disc,
it follows that $T_1\subseteq h_1(t_1)$.
Finally, since $h_1(t_1)$ is a subset of $S_{13}$, $T_1$ is also a subset of $S_{13}$.

It remains to show that $T_2$ is also a subset of $S_{13}$.
Define $G_2(n,a,p)$ to be the random $n$-vertex graph that is obtained as we now describe.
Split the vertices into two sets $A$ and $B$ with $\lfloor an\rfloor$ and $\lceil (1-a)n\rceil$ vertices, respectively.
Join all vertices in the set $A$ by edges,
join two vertices in different sets by an edge with probability $p$, and
join two vertices in the set $B$ by an edge with probability $1-p$.
The sequence $(G_2(n,a,p))_{n\in\NN}$ converges with probability one for all $a,p\in [0,1]$.
The following are the limit co-cherry and triangle densities in this sequence.
\begin{eqnarray*}
& & 3(1-a)^3p^2(1-p)+3a(1-a)^2((1-p)^3+2p^2(1-p))+3a^2(1-a)(1-p)^2 \\
& & (1-a)^3(1-p)^3+3a(1-a)^2p^2(1-p)+3a^2(1-a)p^2+a^3 
\end{eqnarray*}
Let $h_2(a,p)\in\RR^2$ be these limit density of co-cherries and triangles.

Let $t_2\subseteq\RR^2$ be the convex hull of the points $(0,1)$, $(1/2,0)$, $(1,0)$ and $(1,1)$, and
observe that the following holds:
\begin{eqnarray*}
h_2\left(a,1-2a\right) & = & \left(6 a (8 a^5-18 a^4+7 a^3+7 a^2-5 a+1), \right. \\
& & \qquad\qquad\qquad \left. a^2 (16 a^4-60 a^3+78 a^2-42 a+9) \right) \\
h_2(a,0) & = & \left(3a(1-a),\; 1-3a(1-a)\right) \\
h_2(1,p) & = & \left(0,\; 1\right) \\
h_2(a,1) & = & \left(0,\; a^2(3-2a)\right) 
\end{eqnarray*}
Since the derivative of $a^2 (16 a^4-60 a^3+78 a^2-42 a+9)$ is equal to
$$6a(1 - a)(3 - 18 a + 34 a^2 - 16 a^3)\;\mbox{,}$$
which is positive for $a\in (0,1/2)$,
we conclude that the closed region of $\RR^2$ bounded by the image of the boundary of $t_2$
contains $T_2$.
Since $h_2$ is a continuous map from $t_2$ to $\RR^2$ and $t_2$ is a topological $2$-disc,
it follows that $T_2\subseteq h_2(t_2)$.
Finally, since $h_2(t_2)$ is a subset of $S_{13}$, $T_2$ is a subset of $S_{13}$,
which finishes the proof of the theorem.
\end{proof}

\section{Cherry vs.~co-cherry projection}
\label{sec-ch-cc}

In this section, we determine the projection $S_{12}$,
which turned out to be the easiest among the three projections $S_{12}$, $S_{13}$ and $S_{23}$.
The projection is visualized in Figure~\ref{fig-ch-cc}.

\begin{figure}[ht]
\begin{center}
\epsfbox{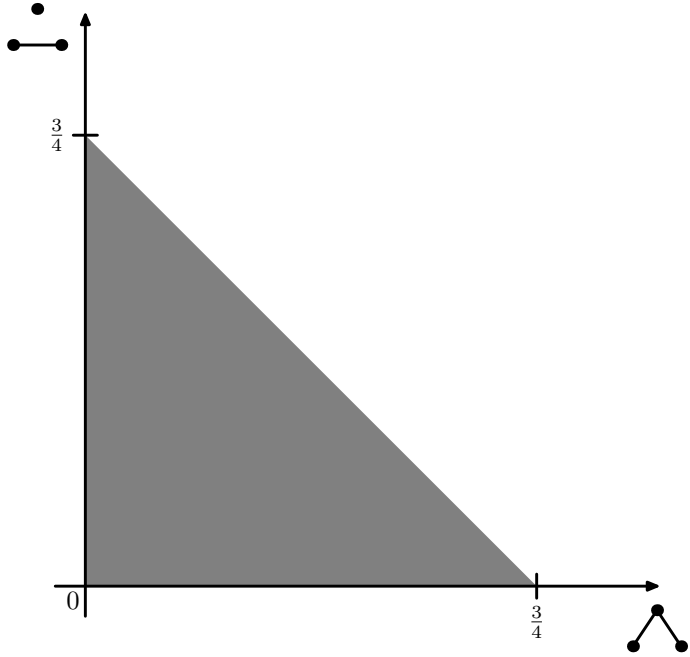}
\end{center}
\caption{Possible densities of cherries and co-cherries in graphs.}
\label{fig-ch-cc}
\end{figure}

\begin{theorem}
\label{thm-ch-cc}
The projection $S_{12}$ consists precisely of the points $(d_1,d_2)$ such that
$d_1\ge 0$, $d_2\ge 0$ and $d_1+d_2\le 3/4$.
\end{theorem}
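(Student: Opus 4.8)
The plan is to prove the two inclusions separately. The upper bound is essentially Goodman's bound, and for the lower bound I would exhibit two explicit families of graphs whose limits together cover the triangle $\Delta:=\{(d_1,d_2)\in\RR^2:d_1,d_2\ge0,\ d_1+d_2\le3/4\}$. For the upper bound: $d_1,d_2\ge0$ is clear, and $d_1+d_2\le3/4$ is, via Proposition~\ref{prop-unit}, just Goodman's bound $d_0+d_3\ge1/4$; in the flag-algebra language it can also be proved exactly as in Lemma~\ref{lm-curve-A}, since by Proposition~\ref{prop-square} one has $0\le\unlabel{(\flageroot-\frac12)^2}{}=\unlabel{\flageroot^2}{}-\flage+\frac14$, and substituting $\unlabel{\flageroot^2}{}=\flagHthree+\frac13\flagHtwo$ and $\flage=\frac13\flagHone+\frac23\flagHtwo+\flagHthree$ (Proposition~\ref{prop-edge}) turns this into $0\le\frac14-\frac13\flagHone-\frac13\flagHtwo$, i.e. $\flagHone+\flagHtwo\le\frac34$.

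For the lower bound I would first record the ``diagonal'' family: for $\theta\in[0,1]$ let $W_\theta$ be the graphon on $[0,1]=A\cup B$ with $|A|=|B|=1/2$, equal to $1-\theta$ on $A^2\cup B^2$ and to $\theta$ on $(A\times B)\cup(B\times A)$. A direct computation gives edge density $1/2$ and $d_1(W_\theta)+d_2(W_\theta)=3/4$, while $W_0$ is two equal cliques (so $d_1=3/4$) and $W_1$ is the balanced complete bipartite graphon (so $d_1=0$), whence $\{d_1(W_\theta):\theta\in[0,1]\}=[0,3/4]$. Family~1: for $(\theta,\lambda)\in[0,1]^2$ let $W_{\theta,\lambda}$ be obtained from $W_\theta$ by adjoining a set of isolated points of measure $\lambda$. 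Using that $W_\theta$ has edge density $1/2$, one gets $d_2(W_{\theta,\lambda})=(1-\lambda)^3d_2(W_\theta)$ and $d_1(W_{\theta,\lambda})=(1-\lambda)^3d_1(W_\theta)+\frac32(1-\lambda)^2\lambda$. Writing $\mu=1-\lambda$ one checks $d_1+d_2=\frac34\mu^2(2-\mu)$, so $\mu$ is determined by $d_1+d_2$, and solving back shows that the image of Family~1 is exactly $R:=\{(d_1,d_2)\in\Delta:\ d_2\le\frac34\mu^3\}$, where $\mu=\mu(d_1+d_2)\in[0,1]$ is the root of $\mu^2(2-\mu)=\frac43(d_1+d_2)$. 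Substituting $\mu=2s$, the ``free'' part of $\partial R$ inside $\Delta$ is the arc $C:=\{(6s^2(1-2s),\,6s^3):s\in[0,\frac12]\}$ joining $(0,0)$ to $(0,3/4)$, and $R$ is the side of $C$ not containing the edge $d_1=0$ of $\Delta$.

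Family~2: for $\beta,\gamma\ge0$ with $\beta+\gamma\le1$, take the disjoint union of a complete bipartite graph with parts of relative sizes $\beta$ and $\gamma$ and an independent set of relative size $1-\beta-\gamma$; this has $d_1=6\beta\gamma(1-\beta-\gamma)$ and $d_2=3\beta\gamma(\beta+\gamma)$, and eliminating $\beta,\gamma$ via $\beta+\gamma=2d_2/(d_1+2d_2)$, $\beta\gamma=(d_1+2d_2)/6$ shows that the image of Family~2 is exactly $A:=\{(d_1,d_2):d_1,d_2\ge0,\ (d_1+2d_2)^3\le6d_2^2\}$. A short check (using that on $C$ one has $(d_1+2d_2)^3=216s^6=6d_2^2$) shows $A\subseteq\Delta$ and that $A$ is precisely the closed region of $\Delta$ bounded by $C$ and the edge $d_1=0$. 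Since $C$ is an arc in $\Delta$ with both endpoints on $\partial\Delta$, it cuts $\Delta$ into the two pieces $R$ and $A$, so $R\cup A=\Delta$; hence $\Delta\subseteq S_{12}$, which together with the upper bound finishes the proof.

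I expect the main obstacle to be the global bookkeeping rather than any single estimate: the three extreme points $(0,0)$ (the empty graph), $(3/4,0)$ (two equal cliques) and $(0,3/4)$ (the balanced complete bipartite graph) are structurally very different --- the last two being complements of one another --- so no single natural one- or two-parameter family realises all of them, which forces the patching of two families and the verification that their images meet exactly along the arc $C$ with no gap; choosing the ``right'' two families is the crux. If one prefers to avoid the explicit image computations, the inclusion $R\subseteq S_{12}$ can instead be obtained from Family~1 by the topological-disc argument used in the proof of Theorem~\ref{thm-tr-cc}, since the boundary of the square $[0,1]^2$ maps under $(\theta,\lambda)\mapsto(d_1,d_2)$ onto the simple closed curve consisting of the segment $d_2=0$, the segment $d_1+d_2=3/4$ and the arc $C$, which bounds $R$.
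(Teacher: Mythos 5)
Your proposal is correct, and while the upper bound is the same as the paper's (Goodman's bound $d_0+d_3\ge 1/4$ combined with Proposition~\ref{prop-unit}; your flag-algebra derivation of it checks out), your lower bound takes a genuinely different route. The paper uses a single two-parameter random family $G(n,a,p)$ --- two parts of relative sizes $a$ and $1-a$, edge probability $p$ inside the parts and $1-p$ across --- and splits the target triangle along the diagonal $d_1=d_2$ into $T_1$ and $T_2$, each obtained as the continuous image of a parameter triangle via the topological $2$-disc argument (boundary maps onto boundary, hence the image contains the region). You instead patch two structurally different families --- the balanced $a=1/2$ slice of that family diluted by isolated vertices, and a complete bipartite graph plus isolated vertices --- and split $\Delta$ along the explicit cubic arc $C=\{(6s^2(1-2s),6s^3)\}$, determining each image exactly by elementary algebra (monotonicity of $\mu\mapsto\mu^2(2-\mu)$ for Family~1, the discriminant condition $(d_1+2d_2)^3\le 6d_2^2$ for Family~2); I verified that $C$ is indeed the common boundary of the two images and that the two closed regions exhaust $\Delta$. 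The paper's approach buys uniformity (one family, no need to compute images precisely, only boundary behaviour); yours buys explicitness (exact descriptions of the images, no topological degree argument needed, and Family~2 is deterministic), at the cost of the gluing bookkeeping you correctly identify as the crux. Both are valid proofs.
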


\begin{proof}
Let $T$ be the set of the points $(d_1,d_2)$ that satisfy $d_1\ge 0$, $d_2\ge 0$ and $d_1+d_2\le 3/4$.
We aim to show that $S_{12}=T$.
Consider a point $(d_0,d_1,d_2,d_3)\in S$.
Since $d_0+d_3\ge 1/4$ by Goodman's bound,
it follows from Proposition~\ref{prop-unit} that $d_1+d_2\le 3/4$.
Hence, the projection $S_{12}$ is a subset of the set $T$.

We now define an $n$-vertex random graph $G(n,a,p)$ where $a,p\in [0,1]$ as follows:
split the vertices of $G$ into a set $A$ of size $\lfloor an\rfloor$ and a set $B$ of size $\lceil (1-a)n\rceil$,
a pair of vertices inside the set $A$ or inside the set $B$ is joined by an edge with probability $p$, and
the remaining pairs of vertices are joined with probability $1-p$.
The sequence of random graphs $(G(n,a,p))_{n\in\NN}$ is convergent with probability one.
Observe that the expected co-cherry density in $G(n,a,p)$ is equal to
$$(a^3+(1-a)^3)\cdot 3p(1-p)^2 +(3a(1-a))\cdot (p^3+2(1-p)^2p)=$$
\begin{equation}
3p(1-p)^2+3a(1-a)p(2p-1)
\label{eq-ch-cc-CC}
\end{equation}
and the expected cherry density is equal to
$$(a^3+(1-a)^3)\cdot 3p^2(1-p)+(3a(1-a))\cdot ((1-p)^3+2p^2(1-p))=$$
\begin{equation}
3p^2(1-p)+3a(1-a)(1-p)(1-2p)\;\mbox{.}
\label{eq-ch-cc-CR}
\end{equation}
Let $h(a,p)\in\RR^2$ be the limit co-cherry and cherry densities in the sequence $(G(n,a,p))_{n\in\NN}$;
the standard concentration arguments yield that the sequence converges and
the coordinates of $h(a,p)$ are equal to (\ref{eq-ch-cc-CC}) and (\ref{eq-ch-cc-CR}) with probability one, respectively.
Note that $h(a,p)\in S_{12}$ for all choices $a,p\in [0,1]$.

Let $T_1$ be the subset of $T$ formed by the points $(d_1,d_2)\in T$ with $d_1\ge d_2$, and
let $T_2$ be the subset formed by the points $(d_1,d_2)\in T$ with $d_1\le d_2$.
Further, let $t_1\subseteq\RR^2$ be the set of points $(x,y)$ such that $0\le x\le 1/2$ and $0\le y\le x$, and
let $t_2\subseteq\RR^2$ be the set of points $(x,y)$ such that $1/2\le x\le 1$ and $x\le y\le 1$.
Note that $t_1$ is the convex hull of the points $(0,0)$, $(1/2,0)$ and $(1/2,1/2)$, and
$t_2$ is the convex hull of the points $(1/2,1/2)$, $(1/2,1)$ and $(1,1)$.
Observe that the following holds:
\begin{eqnarray*}
h(a,0) & = & (0,\; 3a(1-a)) \\
h(1/2,p) & = & \left(\frac{3}{8}-\frac{3}{8}(1-2p)^3,\; \frac{3}{8}+\frac{3}{8}(1-2p)^3\right) \\
h(a,a) & = & \left(3a(1-a)\left(1-2a+2a^2\right),\; 3a(1-a)\left(1-2a+2a^2\right)\right)
\end{eqnarray*}
Hence, the boundary of the triangle $t_1$ is mapped by $h$ to the boundary of $T_1$.
Since $h$ is a continuous map from $t_1$ to $\RR^2$,
$t_1$ is a topological $2$-disc and its boundary is mapped to the boundary of $T_1$,
it follows $T_1\subseteq h(t_1)$.
Since $h(t_1)$ is a subset of $S_{12}$, it follows that $T_1\subseteq S_{12}$.
The analogous argument yields that
the boundary of triangle $t_2$ is mapped by $h$ to the boundary of $T_2$,
which implies that $T_2\subseteq h(t_2)$ and thus $T_2\subseteq S_{12}$.
We conclude that all the points of $T=T_1\cup T_2$ are contained in $S_{12}$,
which finishes the proof that $S_{12}=T$.
\end{proof}

\section{Triangle vs.~cherry projection}
\label{sec-tr-ch}

In this section, we determine the last remaining projection $S_{23}$.
Recall that $g_R^{-1}:[0,1]\to [1/2,1]$
is the inverse of the function $g_R$ from Theorem~\ref{thm-e-tr} restricted to the interval $[1/2,1]$.

\begin{figure}[ht]
\begin{center}
\epsfbox{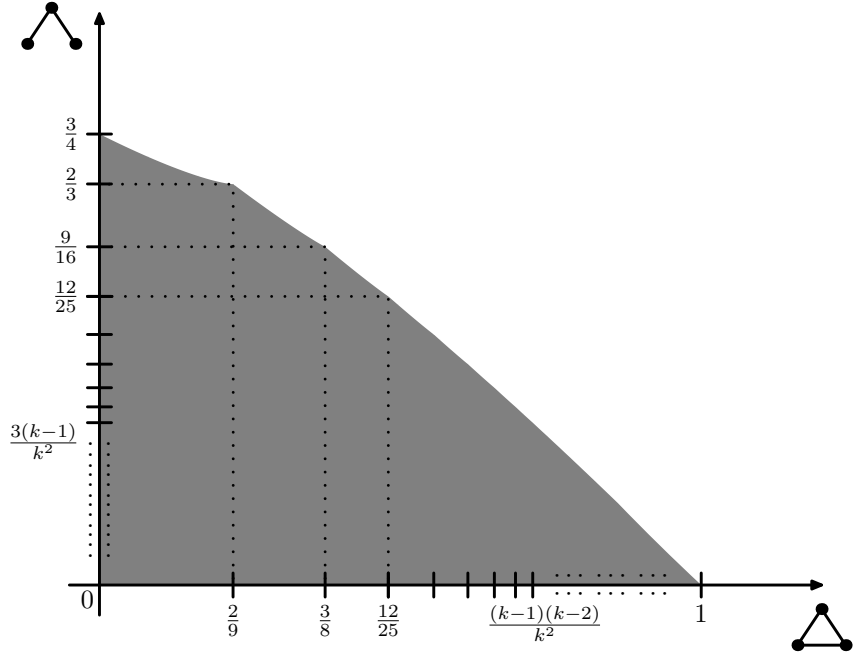}
\end{center}
\caption{Possible densities of triangles and cherries in graphs.}
\label{fig-tr-ch}
\end{figure}

\begin{theorem}
\label{thm-tr-ch}
The projection $S_{23}$ consists precisely of the points $(d_2,d_3)$ such that
$d_2\ge 0$, $d_3\ge 0$ and $d_2\le \frac{3}{2}\left(g_R^{-1}(d_3)-d_3\right)$.
\end{theorem}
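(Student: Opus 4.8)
The plan is to prove the two inclusions $S_{23}\subseteq T$ and $T\subseteq S_{23}$ separately, where $T$ is the set of pairs $(d_2,d_3)$ with $d_2,d_3\ge 0$ and $d_2\le\frac{3}{2}(g_R^{-1}(d_3)-d_3)$.

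The inclusion $S_{23}\subseteq T$ will be a short consequence of Razborov's Theorem~\ref{thm-e-tr} together with Propositions~\ref{prop-unit} and~\ref{prop-edge}. Given $(d_0,d_1,d_2,d_3)\in S$, set $d_e=\frac{d_1+2d_2+3d_3}{3}$. Theorem~\ref{thm-e-tr} gives $d_3\ge g_R(d_e)$; since $g_R$ restricted to $[1/2,1]$ is a continuous increasing bijection onto $[0,1]$ and $g_R\equiv 0$ on $[0,1/2]$, this is equivalent to $d_e\le g_R^{-1}(d_3)$ (the subcase $d_e\le 1/2$ being immediate, as $g_R^{-1}(d_3)\ge 1/2$). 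Finally, since $d_1\ge 0$ we have $3d_e=d_1+2d_2+3d_3\ge 2d_2+3d_3$, hence $d_2\le\frac{3}{2}(d_e-d_3)\le\frac{3}{2}(g_R^{-1}(d_3)-d_3)$.

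For $T\subseteq S_{23}$ I would write down an explicit graphon realizing each point of $T$. The boundary point $(0,1)$ comes from the complete graph, and the segment $\{(d_2,0):0\le d_2\le 3/4\}$ from the complete bipartite graphons, which have zero triangle density and cherry density $3a(1-a)$ for a part of relative size $a$. For a fixed $d_3\in(0,1)$, consider the graphons $W_\lambda$, $\lambda\in[d_3^{1/3},1]$, where $W_\lambda$ is a fraction $1-\lambda$ of isolated vertices together with, on the remaining fraction $\lambda$, a copy of the complete multipartite graphon defining the value of $g_R$ at edge density $g_R^{-1}(d_3/\lambda^3)$; this inner graphon has triangle density $d_3/\lambda^3\in[d_3,1]$ and, being complete multipartite, co-cherry density $0$ and hence (via Proposition~\ref{prop-edge}) cherry density $\frac{3}{2}\bigl(g_R^{-1}(d_3/\lambda^3)-d_3/\lambda^3\bigr)$. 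Because any triple of vertices that contains an isolated vertex spans a graph with at most one edge, inserting the isolated vertices multiplies both the triangle and cherry densities by $\lambda^3$, so $W_\lambda$ has triangle density $d_3$ and cherry density $\phi(\lambda):=\frac{3}{2}\bigl(\lambda^3 g_R^{-1}(d_3/\lambda^3)-d_3\bigr)$. The function $\phi$ is continuous with $\phi(d_3^{1/3})=0$ (there the inner graphon is the complete graph) and $\phi(1)=\frac{3}{2}(g_R^{-1}(d_3)-d_3)$, so the intermediate value theorem shows it attains every value in $[0,\frac{3}{2}(g_R^{-1}(d_3)-d_3)]$; letting $d_3$ range over $(0,1)$ then yields all of $T$.

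The technical points to verify are mild: one must check that the complete multipartite graphons defining $g_R$ can be chosen to depend continuously on the edge density, which amounts to following these graphons through the transitions between consecutive values of the integer parameter $k$, where the family passes continuously through the balanced Tur\'an graphons, and that $g_R^{-1}$ is continuous, which holds since $g_R$ is a strictly increasing continuous bijection on $[1/2,1]$. I expect the only obstacle to be organising this continuity bookkeeping; the substance of the theorem is the inequality $d_3\ge g_R(d_e)$, which is Razborov's theorem, and the rest is elementary.
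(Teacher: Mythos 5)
Your proof is correct, and while the first inclusion $S_{23}\subseteq T$ is essentially identical to the paper's (Razborov's bound applied via Propositions~\ref{prop-unit} and~\ref{prop-edge}, plus dropping the $d_1$ term), your argument for $T\subseteq S_{23}$ takes a genuinely different route. The paper uses the same two-parameter family (isolated vertices plus an extremal complete multipartite graph) but covers $T$ by a two-dimensional topological argument: it checks that the continuous map $h$ sends the boundary of the parameter square onto the boundary of $T$ and concludes $T\subseteq h(t)$. You instead slice $T$ along lines of constant triangle density $d_3$ and run a one-variable intermediate value argument in the parameter $\lambda$ measuring the non-isolated part, using that the cherry density of $W_\lambda$ is the explicit continuous function $\phi(\lambda)=\frac{3}{2}\bigl(\lambda^3 g_R^{-1}(d_3/\lambda^3)-d_3\bigr)$ interpolating between $0$ and $\frac{3}{2}\bigl(g_R^{-1}(d_3)-d_3\bigr)$. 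This is more elementary and, in fact, more airtight: the paper's disc-covering step implicitly needs the boundary map to have nonzero degree, which is asserted rather than verified, whereas your IVT argument needs nothing beyond continuity of $g_R^{-1}$. One small simplification to your own writeup: the ``continuity bookkeeping'' for the extremal graphons themselves is not actually needed, since $\phi$ depends on the inner graphon only through its triangle density (co-cherry density being zero forces the cherry density via Proposition~\ref{prop-edge}), so continuity of $g_R^{-1}$ on $[0,1]$ is the only analytic fact required.
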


\begin{proof}
Let $T$ be the set of the points $(d_2,d_3)$ that satisfy the inequalities $d_2\ge 0$, $d_3\ge 0$ and $d_2\le \frac{3}{2}\left(g_R^{-1}(d_3)-d_3\right)$.
We will show that $S_{23}=T$.
Consider a point $(d_0,d_1,d_2,d_3)\in S$.
Clearly, $d_1$, $d_2$ and $d_3$ are non-negative.
By Theorem~\ref{thm-e-tr}, we have that
$$d_3\ge g_R\left(\frac{d_1+2d_2+3d_3}{3}\right)\;\mbox{.}$$
It follows that
$$\frac{2d_2+3d_3}{3}\le\frac{d_1+2d_2+3d_3}{3}\le g_R^{-1}(d_3)\;\mbox{,}$$
which yields that
$$d_2\le\frac{3}{2}\left(g_R^{-1}(d_3)-d_3\right)\;\mbox{.}$$
We conclude that the projection $S_{23}$ is a subset of the set $T$.

We now define an $n$-vertex graph $G(n,a,b)$ for $a\in [0,1/2]$ and $b\in [0,1]$.
The graph $G(n,a,b)$ has $\lfloor (1-b)n\rfloor$ isolated vertices.
If $a=0$, the remaining $\lceil bn\rceil$ vertices form a complete graph.
Otherwise, the remaining $\lceil bn\rceil$ vertices form a complete multipartite graph
with $\lfloor a^{-1}\rfloor+1$ parts such that
$\lfloor a^{-1}\rfloor$ parts have size $\lfloor abn\rfloor$ and
the remaining part has size $\lceil bn\rceil-\lfloor a^{-1}\rfloor\cdot\lfloor abn\rfloor$.
One or more of the parts of the complete multipartite graph can be empty;
this happens if either $bn$ is an integer, $a^{-1}$ is an integer and $a^{-1}$ divides $bn$, or $abn<1$.
It is straightforward to show that
the sequence of graphs $(G(n,a,b))_{n\in\NN}$ converges for every fixed pair $a\in [0,1/2]$ and $b\in [0,1]$.
Let $h(a,b)\in\RR^2$ be the limit cherry and triangle densities in the sequence $(G(n,a,b))_{n\in\NN}$.
Observe that $h(a,b)$ is a continuous function from $[0,1/2]\times [0,1]$ to $\RR^2$.

We now investigate the function $h(a,b):[0,1/2]\times [0,1]\to\RR^2$.
First observe that $h(a,b)=b^3\cdot h(a,1)$.
We have $h(0,1)=(0,1)$ and thus $h(0,b)=(0,b^3)$.
Fix $a\in (0,1/2]$ and consider the sequence of graphs $G(n,a,1)$.
Let $d_e$ be the limit edge density and let $d_k$ be the limit density of $H_k$, the $k$-edge $3$-vertex graph.
Note that $d_1=0$ since each of the graphs $G(n,a,1)$ is a complete multipartite graph.
Next observe that the graph $G(n,a,1)$ is a complete multipartite graph with $\lfloor a^{-1}\rfloor+1$ parts such that
the fraction of the vertices contained in $\lfloor a^{-1}\rfloor$ of its parts converges to $a$.
Hence, it follows that $d_3=g_R(d_e)$.
The construction of graphs $G(n,a,1)$ and the fact that $a\le 1/2$, implies that $d_e\ge 1/2$.
Since $d_e=(d_1+2d_2+3d_3)/3$ (see Proposition~\ref{prop-edge}) and $d_1=0$, we get the following:
$$g_R^{-1}(d_3)=\frac{2d_2+3d_3}{3}\;\mbox{.}$$
Thus $d_2=\frac{3}{2}\left(g_R^{-1}(d_3)-d_3\right)$.
We conclude that the points $h(a,1)$, $a\in [0,1/2]$, form the curve of the boundary of $T$
between the points $(0,1)$ and $(3/4,0)$.

Consider now the square $t=[0,1/2]\times [0,1]$.
We claim that the boundary of $t$ is mapped by $h$ to the boundary of $T$.
Indeed, the segment $\{1/2\}\times [0,1]$ is mapped to the segment $\{0\}\times [0,3/4]$,
the segment $[0,1/2]\times\{0\}$ is mapped to the point $(0,0)$,
the segment $\{0\}\times [0,1]$ is mapped to the segment $[0,1]\times\{0\}$, and
the segment $[0,1/2]\times\{1\}$ is mapped to the remaining part of the boundary of~$T$.
Since $h$ is a continuous map from $t$ to $\RR^2$,
$t$ is a topological $2$-disc and its boundary is mapped to the boundary of $T$,
it follows $T\subseteq h(t)$.
Since $h(t)$ is a subset of $S_{23}$, we conclude that $T\subseteq S_{23}$,
which yields that $T=S_{23}$.
\end{proof}

\section{Boundaries of the projections}
\label{sec-concl}

In this section,
we briefly discuss the structure of graphs on the boundaries of the projections $S_{13}$, $S_{12}$ and $S_{23}$.
Those on the parts of the boundaries with the zero density of one of the graphs $H_1$, $H_2$ and $H_3$
can be any graphs with corresponding zero density.
While the structure of triangle-free graphs can be very complex,
every graph with the zero density of $H_2$ is a union of cliques;
graphs with the zero density of $H_1$ are then their complements.
The situation is more interesting for the other parts of the boundaries of the projections.

Let us start with the projection $S_{13}$.
We will describe the structure of graphons $W$ with the co-cherry density equal to $g_t(d(H_3,W))$.
We distinguish four cases based on the triangle density of $W$;
each case corresponds to one of the smooth parts of the curve $(x,g_t(x))$, $x\in [0,1]$.
If the triangle density belongs to the interval $[0,1/16]$,
then the upper bound follows from Lemma~\ref{lm-curve-A}.
The equality $d(H_1,W)=g_t(d(H_3,W))$ holds
if and only if (\ref{eq-A-1}) in the proof of Lemma~\ref{lm-curve-A} holds with equality.
Hence, a graphon $W$ satisfies $d(H_1,W)=g_t(d(H_3,W))$ and $d(H_3,W)\in [0,1/16]$
if and only if $d_W(x)=1/4$ for almost every $x\in [0,1]$.

If the triangle density belongs to the interval $(1/16,1/9)$,
then the proof of Lemma~\ref{lm-curve-B} yields that
a graphon $W$ satisfies $d(H_1,W)=g_t(d(H_3,W))$ if and only if
it has three components of measures $x_1$, $x_2$ and $x_3$ with relative edge densities $y_1$, $y_2$ and $y_3$ and
zero co-triangle densities such that
the values of $x_1$, $x_2$, $x_3$, $y_1$, $y_2$ and $y_3$
maximize the sum (\ref{eq-B-7}) subject to (\ref{eq-B-8}), (\ref{eq-B-9}) and (\ref{eq-B-10}).
Proposition~\ref{prop-max} yields that such values are unique up to their permutation.
We conclude that a graphon satisfies $d(H_1,W)=g_t(d(H_3,W))$ and $d(H_3,W)\in (1/16,1/9)$
if and only if it is a limit of the sequence $(G_0(n,x))_{n\in\NN}$ for some $x\in [1/16,1/9]$,
where the graph $G_0(n,x)$ is defined as in the proof of Theorem~\ref{thm-tr-cc}.

If the triangle density belongs to the interval $[1/9,1/4)$,
then the proof of Lemma~\ref{lm-curve-C} implies that
the density of cherries is zero and
the density of co-triangles is equal to $g_R(1-d(K_2,W))$,
i.e., the graphon $1-W$ is one of the graphons minimizing the triangle density for a given edge density.
The structure of such graphons was determined by Pikhurko and Razborov~\cite{bib-pikhurko16+}.
It follows that a graphon $W$ satisfies that $d(H_1,W)=g_t(d(H_3,W))$ and $d(H_3,W)\in [1/9,1/4)$
if and only if it is a limit of the sequence $(G_0(n,x))_{n\in\NN}$ for some $x\in [1/9,1/4)$,
where the graph $G_0(n,x)$ is defined as in the proof of Theorem~\ref{thm-tr-cc}.

Finally, if the triangle density belongs to the interval $[1/4,1]$,
then Proposition~\ref{prop-unit} gives that the co-triangle and cherry density must be zero,
i.e., such graphons corresponds to unions of two complete graphons.
Consequently, a graphon $W$ satisfies that $d(H_1,W)=g_t(d(H_3,W))$ and $d(H_3,W)\in [1/4,1]$
if and only if it is a limit of the sequence $(G_2(n,a,0))_{n\in\NN}$ for some $a\in [0,1/2]$,
where the graph $G_2(n,a,p)$ is defined as in the proof of Theorem~\ref{thm-tr-cc}.

The situation is less complex for the projections $S_{12}$ and $S_{23}$.
The case of the projection $S_{12}$ is quite simple:
the structure of extremal configurations for Goodman's bound implies that
a graphon $W$ satisfies \hbox{$d(H_1,W)+d(H_2,W)=3/4$} if and only if $d_W(x)=1/2$ for almost every $x\in [0,1]$.
In the case of the projection $S_{23}$, we need to inspect the proof of Theorem~\ref{thm-tr-ch}.
We note that the equality in the last inequality in the statement of the theorem holds if and only if the graphon
has zero co-cherry density and it is one of the extremal configurations described
by Pikhurko and Razborov in~\cite{bib-pikhurko16+}.
It follows that a graphon $W$ satisfies that $d(H_2,W)=\frac{3}{2}\left(g_R^{-1}(d(H_3,W))-d(H_3,W)\right)$ if and only if
either it is equal to one almost everywhere or
there exists $k$ and $\alpha\in [\frac{1}{k},\frac{1}{k-1})$ and
$W$ is a limit of a sequence of complete $k$-partite graphs with $k-1$ parts containing the fraction $\alpha$ of the vertices.

\section*{Acknowledgements}

The authors would like to thank D\'aniel Kor\'andi and Jordan Venters
for stimulating discussions on the topics covered in this paper.

\newpage
\section*{Appendix}

We rewrite Proposition~\ref{prop-max} in a self-contained form and present a formal proof. 

\setcounter{theorem}{9}
\begin{proposition}
Let $\alpha\in(2,1+\sqrt{2})$.
The maximum value of 
\begin{equation}
F(x_1,x_2,x_3,y_1,y_2,y_3)=\sum_{j=1}^3 x_j^3\left(3-\alpha-3(3-\alpha)y_j-3(\alpha-1)y_j^2\right)+3x_j^2y_j
\label{eq-B-7A}
\end{equation}
subject to 
\begin{eqnarray}
x_1,x_2,x_3 & \ge & 0\,\mbox{,} \label{eq-B-8A}\\
x_1+x_2+x_3 & = & 1\,\mbox{, and} \label{eq-B-9A}\\
y_1,y_2,y_3 & \in & (1/2,1]\,\mbox{.} \label{eq-B-10A}
\end{eqnarray}
is
\begin{equation}
\frac{-\alpha^6+6\alpha^5-9\alpha^4-4\alpha^3+96\alpha-80}{144(\alpha-1)}\;\mbox{.}\label{eq-B-M}
\end{equation}
This value is attained in particular
for $$x_1=x_2=\sigma\mbox{,}\, x_3=1-2\sigma\mbox{,}\, y_1=y_2=1\mbox{ and } y_3=\frac{(1-2\sigma)\sqrt{5-12\sigma} + 4\sigma-1}{2(1-2\sigma)\sqrt{5-12\sigma}}\,\mbox{,} $$
where $\sigma=\frac{5-(\alpha-1)^2}{12}=\frac{-\alpha^2+2\alpha+4}{12}\in (1/4,1/3)$.
\end{proposition}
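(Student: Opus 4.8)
The plan is to reduce the six–variable problem to a one–dimensional picture on the probability simplex and then run a short case analysis. As a preliminary step I would relax the half–open constraint $(\ref{eq-B-10A})$ to the closed condition $y_j\in[1/2,1]$, so that $F$ is continuous on a compact set and a maximiser certainly exists; at the very end one checks that the maximiser produced satisfies $y_1=y_2=1$ and $y_3>1/2$, hence lies in the original feasible set, so the two suprema coincide and the maximum is attained.

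\emph{Eliminating the $y_j$.} For fixed $x_1,x_2,x_3$ and $x_j>0$ the $j$-th summand of $(\ref{eq-B-7A})$ is a strictly concave quadratic in $y_j$ (the coefficient of $y_j^2$ is $-3(\alpha-1)x_j^3<0$), with unconstrained maximiser $y_j^\ast(x_j)=\frac{1-(3-\alpha)x_j}{2(\alpha-1)x_j}$; an elementary computation gives $y_j^\ast(x_j)\ge 1\iff x_j\le\frac1{\alpha+1}$ and $y_j^\ast(x_j)\le\frac12\iff x_j\ge\frac12$. Hence the optimal $y_j$ equals $1$ when $x_j\le\frac1{\alpha+1}$, equals $y_j^\ast(x_j)$ when $\frac1{\alpha+1}<x_j<\frac12$, and equals $\frac12$ when $x_j\ge\frac12$ (and is immaterial when $x_j=0$). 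Substituting back, the problem becomes to maximise $\Phi(x_1,x_2,x_3)=\phi(x_1)+\phi(x_2)+\phi(x_3)$ over the simplex $(\ref{eq-B-8A})$, $(\ref{eq-B-9A})$, where $\phi$ is the explicit piecewise cubic equal to $3x^2-(\alpha+3)x^3$ on $[0,\frac1{\alpha+1}]$, to $(3-\alpha)x^3+\frac{3x(1-(3-\alpha)x)^2}{4(\alpha-1)}$ on $[\frac1{\alpha+1},\frac12]$, and to $\frac32x^2-\frac{\alpha+3}{4}x^3$ on $[\frac12,1]$; since $\partial_{y_j}$ of the summand vanishes at $y_j^\ast$, the envelope theorem shows $\phi$ is $C^1$ across the two breakpoints.

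\emph{Maximising $\Phi$.} Computing $\phi''$ on each piece and using $2<\alpha<3$ to place its (at most one per piece) inflection point, one finds $\phi$ is convex on $[0,\frac1{\alpha+3}]$, concave on $[\frac1{\alpha+3},\frac1{\alpha+1}]$, convex on $[\frac1{\alpha+1},\frac12]$, and concave on $[\frac12,1]$. If $\Phi$ attained its maximum at a point with two positive coordinates in the interior of a convex range of $\phi$, then the restriction of $\Phi$ to the mass–transfer line $\{x_i+t,\,x_j-t\}$ would be strictly convex near $t=0$, so that point could not be a maximum; therefore at most one positive coordinate lies in $[0,\frac1{\alpha+3}]\cup[\frac1{\alpha+1},\frac12]$. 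Together with stationarity (all positive coordinates share one value of $\phi'$, and $\phi'(0)=0\le\lambda$ for a vanishing coordinate), the fact that $\frac13>\frac1{\alpha+1}$, and the observation that at most one coordinate can exceed $\frac12$, a finite case analysis over the number of vanishing $x_j$ and over which of the four monotone arcs of $\phi'$ the positive ones occupy is forced to the configuration $x_1=x_2=\sigma\in(\frac1{\alpha+3},\frac1{\alpha+1})$, $x_3=1-2\sigma\in(\frac1{\alpha+1},\frac12)$. The one remaining stationarity equation $\phi'(\sigma)=\phi'(1-2\sigma)$ (first piece of $\phi$ on the left, middle piece on the right) is a polynomial identity in $\sigma$ and $\alpha$ that simplifies to $12\sigma=5-(\alpha-1)^2$; the degenerate cases with fewer than three positive coordinates reduce to one- and two-variable optimisations (e.g.\ the best balanced two-part split contributes $\frac{9-\alpha}{16}$) which are evaluated directly and shown not to exceed the value at this point.

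\emph{Conclusion.} Substituting $x_1=x_2=\sigma$, $x_3=1-2\sigma$, $y_1=y_2=1$, $y_3=y_3^\ast(1-2\sigma)$ into $(\ref{eq-B-7A})$ and eliminating $\sigma$ via $(\alpha-1)^2=5-12\sigma$ yields, after routine algebra, the value $(\ref{eq-B-M})$; and since $x_3=1-2\sigma<\frac12$ we get $y_3^\ast(1-2\sigma)>\frac12$, so the maximum is attained at a point feasible for $(\ref{eq-B-8A})$--$(\ref{eq-B-10A})$. I expect the main obstacle to be the case analysis in the third step: because $\phi$ is not globally concave, the first–order conditions alone do not isolate the optimum, so one must enumerate every stationary and boundary configuration and compare the corresponding values; packaging this enumeration cleanly, together with the polynomial manipulation turning $\phi'(\sigma)=\phi'(1-2\sigma)$ into $12\sigma=5-(\alpha-1)^2$, is where the real work lies.
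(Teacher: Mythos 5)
Your reduction is exactly the paper's: relax $(\ref{eq-B-10A})$ to the closed interval, maximise each summand of $(\ref{eq-B-7A})$ over $y_j$ (a concave quadratic in $y_j$ when $x_j>0$), and substitute to obtain a separable maximisation of $\phi(x_1)+\phi(x_2)+\phi(x_3)$ on the simplex; your formula for $y_j^\ast$, the breakpoints $\frac{1}{\alpha+1}$ and $\frac12$, and the three pieces of $\phi$ all agree with the paper's. Your convexity computation is also correct ($\phi$ is convex on $[0,\frac{1}{\alpha+3}]$ and on $[\frac{1}{\alpha+1},\frac12]$, concave on $[\frac{1}{\alpha+3},\frac{1}{\alpha+1}]$ and on $[\frac12,1]$), and the mass-transfer observation that a maximiser can have at most one positive coordinate in an open convex arc is a genuine pruning that the paper does not use.

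The gap is the sentence asserting that this pruning together with first-order stationarity ``is forced to the configuration $x_1=x_2=\sigma$, $x_3=1-2\sigma$.'' It is not. Several stationary configurations satisfy every necessary condition you list: for instance one coordinate in $\left(0,\frac{1}{\alpha+3}\right)$, one in $\left(\frac{1}{\alpha+3},\frac{1}{\alpha+1}\right)$ and one in $\left(\frac12,1\right)$ has only one coordinate in a convex arc and only one above $\frac12$, and equal values of $\phi'$ are possible since $\phi'$ is two-to-one on the first piece; likewise one coordinate on each of $\left(\frac{1}{\alpha+3},\frac{1}{\alpha+1}\right)$, $\left(\frac{1}{\alpha+1},\frac12\right)$ and $\left(\frac12,1\right)$; and there are the configurations with a coordinate sitting exactly at $\frac{1}{\alpha+1}$ or $\frac12$, where $\phi$ is only $C^1$, $\phi''$ jumps, and your strict-convexity argument does not apply, as well as the one- and two-positive-coordinate cases. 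Each of these must be solved explicitly and its value of $(\ref{eq-B-7A})$ compared with $(\ref{eq-B-M})$; that enumeration and comparison is precisely the content of the paper's appendix (roughly a dozen cases, each ending in an explicit algebraic value shown to lie below $(\ref{eq-B-M})$), and it is the part you defer as ``routine'' and ``where the real work lies.'' So the plan is sound and essentially the paper's, but the decisive step --- showing that every competing stationary or boundary value is strictly smaller --- is not carried out, and the claim that these competitors are eliminated a priori by the second-order argument is false as stated.
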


\begin{proof}
We determine the maximum value of (\ref{eq-B-7A}) when the constraint (\ref{eq-B-10A}) is replaced with
$$y_1,y_2,y_3\in [1/2,1]\,\mbox{,}$$
and show that the maximum value of this relaxed problem is the same.
Before proceeding with the proof, note that if $\alpha\in(2,1+\sqrt{2})$,
then $\sigma\in (1/4,1/3)$.

Fix $\alpha\in(2,1+\sqrt{2})$ and an optimal solution $x_1,\ldots,x_3$ and $y_1,\ldots,y_3$.
By symmetry, we may assume that $x_1\le x_2\le x_3$.
We start by investigating the partial derivative of (\ref{eq-B-7A}) with respect to $y_j$,
which is equal to
\begin{equation}
x_j^3\left(-3(3-\alpha)-6(\alpha-1)y_j\right)+3x_j^2\,\mbox{.}
\label{eq-B-12}
\end{equation}
If $x_j\not=0$ and (\ref{eq-B-12}) is zero, then it holds that
$$y_j=\frac{x_j^{-1}-(3-\alpha)}{2(\alpha-1)}\,\mbox{,}$$
which belongs to $[1/2,1]$ if $x_j\in [(1+\alpha)^{-1},1/2]$.
If $x_j=0$, then the value of (\ref{eq-B-7A}) does not depend on $y_j$ and we can set it arbitrarily.
Hence, we conclude that we can assume that the optimal solution that we have fixed satisfies that
\begin{equation}
y_j=\left\{\begin{array}{cl}
           1 & \mbox{if $x_j\le\frac{1}{\alpha+1}$,} \\
           1/2 & \mbox{if $x_j\ge\frac{1}{2}$, and} \\
           \frac{x_j^{-1}-(3-\alpha)}{2(\alpha-1)} & \mbox{otherwise.}
           \end{array}\right.
\label{eq-B-13}
\end{equation}
If $y_j=1$, then the $j$-th term in (\ref{eq-B-7A}) is equal to $3x_j^2-(\alpha+3)x_j^3$, and
the partial derivative of (\ref{eq-B-7A}) with respect to $x_j$ is
\begin{equation}
6x_j-3(\alpha+3)x_j^2\;\mbox{.} \label{eq-B-14}
\end{equation}
If $y_j=1/2$, then the $j$-th term is $\frac{3}{2}x_j^2-\frac{\alpha+3}{4}x_j^3$, and
the partial derivative of (\ref{eq-B-7A}) with respect to $x_j$ is
\begin{equation}
3x_j-\frac{3}{4}(\alpha+3)x_j^2\;\mbox{.} \label{eq-B-15}
\end{equation}
Finally, if the third case of (\ref{eq-B-13}) applies, then the $j$-th term in (\ref{eq-B-7A}) is 
$$\frac{(3-\alpha)(\alpha+5)x_j^3-6(3-\alpha)x_j^2+3x_j}{4(\alpha-1)}\;\mbox{,}$$
and the partial derivative of (\ref{eq-B-7A}) with respect to $x_j$ is
\begin{equation}
\frac{3(3-\alpha)(\alpha+5)x_j^2-12(3-\alpha)x_j+3}{4(\alpha-1)}\;\mbox{.} \label{eq-B-16}
\end{equation}

We next distinguish three cases depending on how many of the variables $x_1,\ldots,x_3$ are equal to zero.
If two of the variables $x_1,\ldots,x_3$ are zero, i.e., $x_1=x_2=0$, then $x_3=1$.
This implies that $y_3=1/2$ and the value of (\ref{eq-B-7A}) is $\frac{3-\alpha}{4}$,
which is less than (\ref{eq-B-M}).

Suppose that exactly one of the variables is zero, i.e., $x_1=0$. To analyze this case,
we need to distinguish four cases depending on the value of $x_2$.
Note that $x_2\in (0,1/2]$.
\begin{itemize}
\item {\bf The value of $x_2$ belongs to $\left(0,\frac{1}{\alpha+1}\right)$.}
      Note that $x_3\in (1/2,1)$. Using the method of Lagrange multipliers, we get that
      $$\frac{\partial}{\partial x_2}F=\frac{\partial}{\partial x_3}F\;\mbox{.}$$
      It follows from (\ref{eq-B-14}) and (\ref{eq-B-15}) that
      $$6x_2-3(\alpha+3)x_2^2=3x_3-\frac{3}{4}(\alpha+3)x_3^2\;\mbox{.}$$
      This would imply that either $x_2=1/3$ and $x_3=2/3$, or
      $x_2=\frac{1-\alpha}{\alpha+3}$ and $x_3=\frac{2(\alpha+1)}{\alpha+3}$.
      In either of the cases, $x_2$ does not belong to the interval $\left(0,\frac{1}{\alpha+1}\right)$.
\item {\bf The value of $x_2$ is equal $\frac{1}{\alpha+1}$.}
      It follows that $x_3=\alpha/(\alpha+1)$, $y_2=1$ and $y_3=1/2$.
      Consequently, the value of (\ref{eq-B-7A}) is
      $$\frac{-\alpha^4+3\alpha^3+6\alpha^2+8\alpha}{4(\alpha+1)^3}\,\mbox{,}$$
      which is less than (\ref{eq-B-M}).
\item {\bf The value of $x_2$ belongs to $\left(\frac{1}{\alpha+1},1/2\right)$.}
      Applying the method of Lagrange multipliers, we get that
      $$\frac{3(3-\alpha)(\alpha+5)x_2^2-12(3-\alpha)x_2+3}{4(\alpha-1)}=3x_3-\frac{3}{4}(\alpha+3)x_3^2\;\mbox{.}$$
      It follows that
      $$x_2=\frac{\alpha^2-2\alpha+2}{6}\mbox{ and }x_3=\frac{-\alpha^2+2\alpha+4}{6}\;\mbox{.}$$
      The value of (\ref{eq-B-7A}) is then the same as (\ref{eq-B-M}).
      However, since $y_3=1/2$, the values of the variables do not form an optimal solution of the original problem.
\item {\bf The value of $x_2$ is equal to $1/2$.}
      It follows that $x_3=1/2$, $y_2=1/2$ and $y_3=1/2$. Hence, the value of (\ref{eq-B-7A}) is $\frac{9-\alpha}{16}$,
      which is less than (\ref{eq-B-M}).
\end{itemize}

In the rest of the proof, we assume that all the three variables $x_1$, $x_2$ and $x_3$ are positive.
We start by considering the cases when two of the values of $x_1$, $x_2$ and $x_3$
are equal to $\frac{1}{\alpha+1}$ or $1/2$. There are only two cases to analyze (assuming that $x_1\le x_2\le x_3$).
\begin{itemize}
\item {\bf It holds that $x_1=x_2=\frac{1}{\alpha+1}$ and $x_3=\frac{\alpha-1}{\alpha+1}$.}
      It follows that $y_1=y_2=1$ and
      $$y_3=\frac{\alpha^2-3\alpha+4}{2(\alpha-1)^2}\;\mbox{.}$$
      The value of (\ref{eq-B-7A}) is then equal to 
      $$\frac{-\alpha^4+6 \alpha^3+3 \alpha^2-16 \alpha+36}{4 (\alpha+1)^3}\;\mbox{,}$$
      which is smaller than (\ref{eq-B-M}).
\item {\bf It holds that $x_1=\frac{\alpha-1}{2(\alpha+1)}$, $x_2=\frac{1}{\alpha+1}$ and $x_3=\frac{1}{2}$.}
      We get that $y_1=y_2=1$ and $y_3=1/2$.
      This implies that the value of (\ref{eq-B-7A}) is
      $$\frac{-5 \alpha^3+35 \alpha^2-11 \alpha+45}{32 (\alpha+1)^2}\;\mbox{,}$$
      which is also smaller than (\ref{eq-B-M}).
\end{itemize}

We next consider the cases when exactly one of the values of $x_1$, $x_2$ and $x_3$
is equal to $\frac{1}{\alpha+1}$ or $1/2$. Recall that $x_1\le x_2\le x_3$.
\begin{itemize}
\item {\bf The value of $x_1$ belongs to $\left(0,\frac{1}{\alpha+1}\right)$, the value of $x_2$ is equal to $\frac{1}{\alpha+1}$, and
           the value of $x_3$ is smaller than $1/2$.}
      Using the method of Lagrange multipliers, we get that
      $$\frac{\partial}{\partial x_1}F=\frac{\partial}{\partial x_3}F\;\mbox{.}$$
      It follows using (\ref{eq-B-14}) and (\ref{eq-B-16}) that
      $$6x_1-3(\alpha+3)x_1^2=\frac{3(3-\alpha)(\alpha+5)x_3^2-12(3-\alpha)x_3+3}{4(\alpha-1)}\;\mbox{,}$$
      which implies that
\footnotesize
      \begin{eqnarray*}
      x_1 &\! = \!& \frac{-\alpha^4+3 \alpha^3+15 \alpha^2+\alpha-10}{3 (\alpha+1)^4}\pm\frac{\sqrt{ 4 \alpha^6-8 \alpha^5-39 \alpha^4+84 \alpha^3+74 \alpha^2-196 \alpha+97}}{3 (\alpha+1)^3} \\
      x_3 &\! = \!& \frac{4 \alpha^4+6 \alpha^3-6 \alpha^2+2 \alpha+10}{3 (\alpha+1)^4}\mp\frac{\sqrt{ 4 \alpha^6-8 \alpha^5-39 \alpha^4+84 \alpha^3+74 \alpha^2-196 \alpha+97}}{3 (\alpha+1)^3}
      \end{eqnarray*}
\normalsize
      Since $x_3>1/2$ for one of the two choices of the sign, we get that
\footnotesize
      \begin{eqnarray*}
      x_1 &\! = &\! \frac{-\alpha^4+3 \alpha^3+15 \alpha^2+\alpha-10}{3 (\alpha+1)^4}+\frac{\sqrt{ 4 \alpha^6-8 \alpha^5-39 \alpha^4+84 \alpha^3+74 \alpha^2-196 \alpha+97}}{3 (\alpha+1)^3} \\
      x_3 &\! = &\! \frac{4 \alpha^4+6 \alpha^3-6 \alpha^2+2a+10}{3 (\alpha+1)^4}-\frac{\sqrt{ 4 \alpha^6-8 \alpha^5-39 \alpha^4+84 \alpha^3+74 \alpha^2-196 \alpha+97}}{3 (\alpha+1)^3}\;\mbox{,}
      \end{eqnarray*}
\normalsize
      which yields the value of (\ref{eq-B-7A}) smaller than (\ref{eq-B-M}).
\item {\bf The value of $x_1$ belongs to $\left(0,\frac{1}{\alpha+1}\right)$, the value of $x_2$ is equal to $\frac{1}{\alpha+1}$, and
           the value of $x_3$ is larger than $1/2$.}
      We derive using the method of Lagrange multipliers that
      $$6x_1-3(\alpha+3)x_1^2=3x_3-\frac{3}{4}(\alpha+3)x_3^2\;\mbox{.}$$
      Since $x_3>1/2$, it follows that
      \begin{eqnarray*}
      x_1 & = & \frac{-\alpha^2+\alpha+4}{\alpha^2+4\alpha+3} \\
      x_3 & = & \frac{2(\alpha^2+\alpha-2)}{\alpha^2+4\alpha+3} 
      \end{eqnarray*}
      Consequently, the value of (\ref{eq-B-7A}) is
      $$\frac{-\alpha^5+\alpha^4+11\alpha^3+3\alpha^2-6\alpha+24}{ (\alpha+1)^2 (\alpha+3)^2}\,\mbox{,}$$
      which is smaller than (\ref{eq-B-M}).
\item {\bf The value of $x_1$ is equal to $\frac{1}{\alpha+1}$, and
           the remaining values are larger than $\frac{1}{\alpha+1}$.}
      Note that both $x_2$ and $x_3$ must be less than $1/2$.	   
      Since the partial derivatives of $F$ with respect to $x_2$ and $x_3$ are equal,
      it follows that $x_2=x_3$. Hence, it holds that
      $$x_2=x_3=\frac{\alpha}{2(\alpha+1)}\;\mbox{.}$$
      Consequently, the value of (\ref{eq-B-7A}) is
      $$\frac{ - \alpha (\alpha^4-10 \alpha^3-3 \alpha^2-20 \alpha+20)}{16 (\alpha-1) (\alpha+1)^3}\,\mbox{,}$$
      which is smaller than (\ref{eq-B-M}).
\item {\bf Both $x_1$ and $x_2$ belong to $\left(0,\frac{1}{\alpha+1}\right)$, and $x_3=1/2$.}
      Since the partial derivatives of $F$ with respect to $x_1$ and $x_2$ must be the same,
      it follows that $x_1=x_2=1/4$.
      The value of (\ref{eq-B-7A}) is then equal to $\frac{9-\alpha}{16}$,
      which is again smaller than (\ref{eq-B-M}).
\item {\bf The value of $x_1$ belongs to $\left(0,\frac{1}{\alpha+1}\right)$,
           the value of $x_2$ belongs to $\left(\frac{1}{\alpha+1},1/2\right)$, and $x_3=1/2$.}
      We derive using the method of Lagrange multipliers that
      $$6x_1-3(\alpha+3)x_1^2=\frac{3(3-\alpha)(\alpha+5)x_2^2-12(3-\alpha)x_2+3}{4(\alpha-1)}\;\mbox{,}$$
      It follows that
      \begin{eqnarray*}
      x_1 & = & \frac{-\alpha^2-2\sqrt{\alpha^4-8 \alpha^3+23 \alpha^2-22 \alpha+10}+10 \alpha-5}{6 (\alpha+1)^2} \\
      x_2 & = & \frac{2 \alpha^2+\sqrt{\alpha^4-8 \alpha^3+23 \alpha^2-22 \alpha+10}-2 \alpha+4}{3 (\alpha+1)^2}\;\mbox{,}
      \end{eqnarray*}
      and the value of (\ref{eq-B-7A}) yet again smaller than (\ref{eq-B-M}).
\end{itemize}

It remains to consider the cases when none of the values of $x_1$, $x_2$ and $x_3$
is equal to $\frac{1}{\alpha+1}$ or $1/2$.
\begin{itemize}
\item {\bf Both $x_1$ and $x_2$ belong to $\left(0,\frac{1}{\alpha+1}\right)$, and
           $x_3$ belongs to $\left(\frac{1}{\alpha+1},1/2\right)$.}
      It follows that $x_1=x_2$ and that
      $$6x_1-3(\alpha+3)x_1^2=\frac{3(3-\alpha)(\alpha+5)x_3^2-12(3-\alpha)x_3+3}{4(\alpha-1)}\;\mbox{.}$$
      We get that either $x_1=x_2=1/4$ and $x_3=1/2$, which does not meet the description of the case, or
      $$x_1=x_2=\frac{-\alpha^2+2\alpha+4}{12}\mbox{ and }x_3=\frac{\alpha^2-2\alpha+2}{6}\;\mbox{.}$$
      The latter is the solution given in the statement of the proposition.
\item {\bf Both $x_1$ and $x_2$ belong to $\left(0,\frac{1}{\alpha+1}\right)$, and
           $x_3$ belongs to $\left(1/2,1\right)$.}
      We get that $x_1=x_2$ and that
      $$6x_1-3(\alpha+3)x_1^2=3x_3-\frac{3}{4}(\alpha+3)x_3^2\;\mbox{.}$$
      However, the only solution $x_1=x_2=1/4$ and $x_3=1/2$ does not meet the case description.
\item {\bf The value of $x_1$ belongs to $\left(0,\frac{1}{\alpha+1}\right)$, and
           both $x_2$ and $x_3$ belong to $\left(\frac{1}{\alpha+1},1/2\right)$.}
      It follows that $x_2=x_3$ and that
      $$6x_1-3(\alpha+3)x_1^2=\frac{3(3-\alpha)(\alpha+5)x_3^2-12(3-\alpha)x_3+3}{4(\alpha-1)}\;\mbox{.}$$
      We get that
      \begin{eqnarray*}
      x_1 & = & \frac{-\alpha^2+18 \alpha-13\pm2 \sqrt{4 \alpha^4-24 \alpha^3+53 \alpha^2-30 \alpha+1}}{3\cdot (5 \alpha^2+10 \alpha-11)} \\
      x_3 & = & \frac{8 \alpha^2+6 \alpha-10\mp\sqrt{4 \alpha^4-24 \alpha^3+53 \alpha^2-30 \alpha+1}}{3\cdot (5 \alpha^2+10 \alpha-11)}\;\mbox{,}
      \end{eqnarray*}
      which results in the value of (\ref{eq-B-7A}) to be smaller than (\ref{eq-B-M}).
\item {\bf The value of $x_1$ belongs to $\left(0,\frac{1}{\alpha+1}\right)$,
           the value of $x_2$ belongs to $\left(\frac{1}{\alpha+1},1/2\right)$, and
	   the value of $x_3$ belongs to $\left(1/2,1\right)$.}
      Using the method of the Lagrange multipliers, we get that
      $$6x_1-3(\alpha+3)x_1^2=\frac{3(3-\alpha)(\alpha+5)x_2^2-12(3-\alpha)x_2+3}{4(\alpha-1)}=3x_3-\frac{3}{4}(\alpha+3)x_3^2\;\mbox{.}$$
      Hence, $x_3=2x_1$, which implies that $x_1>1/4$.
      However, this is impossible since $x_2>x_1$ and
      the sum of $x_1$, $x_2$ and $x_3$ is equal to one.
\end{itemize}
The proof of the proposition is now finished.
Note that the feasible solution from the statement is unique up to a permutation of the values of the variables.
\end{proof}

\end{document}